\definecolor{gray5}{gray}{0.8}
\definecolor{gray1}{gray}{0.4}
\definecolor{gray2}{gray}{0.6}
\definecolor{gray3}{gray}{0.7}
\definecolor{gray4}{gray}{0.3}
\numberwithin{equation}{section}
\newcommand     {\NN}{\mathbb{N}}
\newcommand     {\RR}{\mathbb{R}}
\newcommand     {\PP}{\mathbb{P}}
\newcommand     {\EE}{\mathbb{E}}
\newtheorem     {thm}{Theorem}[section]
\newtheorem     {lem}[thm]{Lemma}
\newtheorem     {prop}[thm]{Proposition}
\newtheorem     {cor}[thm]{Corollary}
\newcommand{\norm}[1]{\|#1 \|}
\newcommand{\ee}{\varepsilon}
\begin{document}

\title{A probabilistic approach to Dirac concentration in nonlocal models of adaptation with several resources} 

\author{\textsc{Nicolas Champagnat$^{1,2,3}$, Beno\^it Henry$^{1,2}$}}

\footnotetext[1]{IECL, Universit\'e de Lorraine, Site de Nancy, B.P. 70239, F-54506 Vandœuvre-lès-Nancy Cedex, France\\ E-mail:
  \texttt{Nicolas.Champagnat@inria.fr}, \texttt{benoit.henry@univ-lorraine.fr}}
\footnotetext[2]{CNRS, IECL, UMR 7502, Vand{\oe}uvre-l\`es-Nancy, F-54506, France}  
\footnotetext[3]{Inria, TOSCA team, Villers-l\`es-Nancy, F-54600, France}

\date{}
\maketitle

\begin{abstract}
  This work is devoted to the study of scaling limits in small mutations and large time of the solutions $u^\ee$ of two deterministic
  models of phenotypic adaptation, where the parameter $\varepsilon>0$ scales the size of mutations. The first model is the so-called
  Lotka-Volterra parabolic PDE in $\RR^d$ with an arbitrary number of resources and the second one is an adaptation of the first
  model to a finite phenotype space. The solutions of such systems typically concentrate as Dirac masses in the limit $\ee\to 0$. Our
  main results are, in both cases, the representation of the limits of $\ee \log u^{\ee}$ as solutions of variational problems and
  regularity results for these limits. The method mainly relies on Feynman-Kac type representations of $u^\ee$ and Varadhan's Lemma.
  Our probabilistic approach applies to multi-resources situations not covered by standard analytical methods and makes the link
  between variational limit problems and Hamilton-Jacobi equations with irregular Hamiltonians that arise naturally from analytical
  methods. The finite case presents substantial difficulties since the rate function of the associated large deviation principle has
  non-compact level sets. In that case, we are also able to obtain uniqueness of the solution of the variational problem and of the
  associated differential problem which can be interpreted as a Hamilton-Jacobi equation in finite state space.
\end{abstract}          
\bigskip

\noindent {\it MSC 2000 subject classifications:} Primary 60F10, 35K57; secondary 
 49L20, 92D15, 35B25, 47G20.\\

\noindent \textit{Key words and phrases}: adaptive dynamics; Dirac concentration; large deviations principles; Hamilton-Jacobi
equations; variational problems; Lotka-Volterra parabolic equation; Varadhan's lemma; Feynman-Kac representation.

\section{Introduction}

We are interested in the dynamics of a population subject to mutations and selection driven by competition for resources. Each
individual in the population is characterized by a quantitative phenotypic trait $x\in\RR$ (for example the size of individuals,
their mean size at division for bacteria, their rate of nutrients intake or their efficiency in nutrients assimilation). 

The partial differential equations we study are mutation-competition models taking the form of reaction-diffusion
equations, with non-local density-dependence in the growth rate and which have been studied in various contexts by a lot of authors.
The general equation takes the form
\begin{equation}
  \label{eq:EDP}
  \partial_{t}u^{\ee}(t,x)=\frac{\ee}{2} \Delta u^{\ee}(t,x)+\frac{1}{\ee}u^{\ee}(t,x)R\left(x,v^{\ee}_{t}\right),\quad \forall t>0,\ x\in\RR^d 
\end{equation}
where $x$ corresponds to the phenotypic traits characterizing individuals, $u^\ee(tx)$ is the density of population with trait $x$ at
time $t$ and $v^{\ee}_{t}=\left(v^{1,\ee}_{t},\dots,v^{r,\ee}_{t} \right)$ with
\[
v^{i,\ee}_{t}=\int_{\mathbb{R}^{d}}\Psi_{i}\left(x\right)u^{\ee}\left(t,x\right)dx, \quad 1\leq i\leq r,
\]
for some functions $\Psi_i:\RR^d\rightarrow\RR_+$. In~\eqref{eq:EDP}, the Laplace operator models mutation and $R(x,v^{\ee}_{t})$ is
the growth rate of individuals with trait $x$ at time $t$. Competition occurs through the functions $v^{i,\varepsilon}_t$, which
depend on $u^\varepsilon$. A typical example of function $R$ is given by
\begin{equation}
  \label{eq:chemostat}
  R(x,v)=\sum_{i=1}^r \frac{c_i\Psi_i(x)}{1+v_i}-d(x),
\end{equation}
where the first term models births which occur through the consumption of $r$ resources whose concentrations at time $t$ are given by
$c_i/(1+v_i)$ and with a trait-dependent consumption efficiency given by the function $\Psi_i(x)$, and the second term corresponds to
deaths without competition at trait-dependent rate $d(x)$. This form of the function $R$ is relevant for populations of
micro-organisms in a chemostat, and has been studied for related models in lots of
works~\cite{diekmann-jabin-al-05,champagnat-jabin-raoul-10,champagnat-jabin-11,champagnat-jabin-meleard-13}.

The parameter $\varepsilon>0$ in~\eqref{eq:EDP} introduces a scaling (in the limit $\varepsilon\rightarrow 0$) of small or rare
mutations (this is the same for the Laplace operator) and of large time, which was introduced in models of adaptive dynamics
in~\cite{diekmann-jabin-al-05}, and has been used since a long time to study front propagation in standard reaction-diffusion
problems~\cite{fleming-souganidis-86,barles-evans-al-90,freidlin-85,freidlin-92}. In our model, the qualitative outcome is that
solutions concentrate as Dirac masses, and this concentration is studied using the WKB ansatz
\[
u^\varepsilon(t,x)=\exp\left(\frac{\varphi^\varepsilon(t,x)}{\varepsilon}\right)
\]
in~\cite{diekmann-jabin-al-05,barles-perthame-07,perthame-barles-08,barles-mirrahimi-al-09,lorz-mirrahimi-al-11,champagnat-jabin-11,mirrahimi-perthame-al-12}
for different particular cases of~\eqref{eq:EDP} and also
in~\cite{perthame-genieys-07,desvillettes-jabin-al-08,jabin-raoul-11,raoul-11,lorz-mirrahimi-al-11} for models with competitive
Lotka-Volterra competition. Several of these works prove the convergence along a subsequence $(\varepsilon_k)_{k\geq 1}$
converging to 0 of $\varphi^\varepsilon$ to a solution $\varphi$ of the Hamitlon-Jacobi problem
\begin{equation}
  \label{eq:HJ-intro}
  \partial_t\varphi(t,x)=R(x,v_t)+\frac{1}{2}|\nabla\varphi(t,x)|^2,  
\end{equation}
where $v_t$ is expected to take the form
\[
v^i_t=\int_{\mathbb{R}^{d}}\Psi_{i}(x)\mu_t(dx),
\]
where $\mu_t$ is some (measure, weak) limit of $u^{\varepsilon_k}(t,x)$. Due to the fact that, under general assumptions, the total mass of
the population $\int_{\RR^d} u^\varepsilon(t,x)dx$ is uniformly bounded and bounded away from 0, the function $\varphi$ satisfies the
constraint $\sup_{x\in\RR^d}\varphi(t,x)=0$ for all $t\geq 0$, and the measure $\mu_t$ is expected to have support in
$\{\varphi(t,\cdot)=0\}$. In addition, the measure $\mu_t$ is expected to be metastable in the sense that $R(x,v_t)\leq 0$ for all
$x$ such that $\varphi(t,x)=0$ and $R(x,v_t)=0$ for all $x$ in the support of $\mu_t$ (to preserve the condition
$\sup_{x\in\RR^d}\varphi(t,x)=0$).

However, the study of the Hamilton-Jacobi problem~\eqref{eq:HJ-intro} with (some or all of) the previous constraints is a difficult
problem. For example, uniqueness is only known in general in the case
$r=1$~\cite{mirrahimi-roquejoffre-16,CRAS-mirrahimi-roquejoffre} (see also~\cite{perthame-barles-08}). In addition, all the previous
references only prove the convergence to~\eqref{eq:HJ-intro} for $r=1$, except in~\cite{champagnat-jabin-11} where the very specific
model~\eqref{eq:chemostat} is studied for any values of $r\geq 2$. Yet, the case $r\geq 2$ is of particular biological interest since
it is the only case where a phenomenon of diversification known as \emph{evolutionay branching}~\cite{metz-geritz-al-96,dieckmann-doebeli-99} can occur (see~\cite[Prop.\,3.1]{champagnat-jabin-meleard-13}). \medskip

In this article, after describing the general model and the standing assumptions and recalling its basic properties in
Section~\ref{sec:problem}, we propose a different approach to study the convergence of $\varepsilon\log u^\varepsilon(t,x)$ as
$\varepsilon\rightarrow 0$. This approach is based on a probabilistic interpretation of the solution of the PDE~\eqref{eq:EDP}
through a Feynman-Kac formula involving some functional of the stochastic process corresponding to mutations, here Brownian motion
(see Section~\ref{sec:FK}). This formula suggests to use large deviations, and more specifically Varadhan's lemma, to study the
convergence of $\varepsilon\log u^\varepsilon(t,x)$, giving in the limit a variational problem which takes the standard form of
variational problems associated to Hamilton-Jacobi equations, in Section~\ref{sec:Varadhan}. This general idea is actually not new
since it goes back to works of Freidlin~\cite{freidlin-85,freidlin-92} on reaction-diffusion equations. In~\cite{freidlin-87},
Freidlin also studied similar questions for models close to~\eqref{eq:EDP} (with different initial conditions), but only for a single
ressource ($r=1$) and under the assumption that, for all $x\in\RR^d$, there exists a unique $a(x)\in\RR^r$ such that $R(x,a(x))=0$.
This condition is also assumed in the more recent
works~\cite{perthame-barles-08,barles-mirrahimi-al-09,lorz-mirrahimi-al-11,mirrahimi-perthame-al-12}. These assumptions are not
needed in our study. We also present our results in a more unified framework, avoiding in particular the use of precise properties of
the heat semi-group. It is therefore easy to extend our results to other mutation operators in~\eqref{eq:EDP}, as discussed in
Section~\ref{sec:extensions}. In cases where the convergence to the Hamilton-Jacobi problem is known, we also deduce as a side result
the equality between the solution to the Hamilton-Jacobi problem and its variational formulation (see Section~\ref{sec:HJ-VAR}).
Interestingly, this result does not seem to be covered by existing general results on this topic because of the possible
discontinuities of the coefficients of the Hamilton-Jacobi problem.

A natural extension is the case of finite trait space $E$ (instead of $\RR^d$), in which the large deviations scaling suggests to
consider
\[
\dot{u}^{\ee}(t,i)=\sum_{j\in
  E}\exp\left(-\frac{\mathfrak{T}(i,j)}{\ee}\right)(u^{\ee}(t,j)-u^{\ee}(t,i))+\frac{1}{\ee}u^{\ee}(t,i)R(i,v^{\ee}_{t}),\quad
\forall t\in[0,T],\ \forall i\in E
\]
for positive $\mathfrak{T}(i,j)$. The extension needs some care since in this case the large deviations principle for the mutation
process is not standard (in particular, the rate function has non-compact level sets). This is done in Section~\ref{sec:discrete}.
Since the variational problem is simpler in this case, we can study it in detail. In Section~\ref{sec:uniqueness-E-finite}, we prove
that, under assumptions ensuring that one can associate a unique metastable measure $\mu_t$ to any set of zeroes of
$\varphi(t,\cdot)$, the variational problem and the associated Hamilton-Jacobi problem both have a unique solution with appropriate
regularity. Hence the full family $(\varepsilon\log u^\varepsilon(t,x))_{\varepsilon>0}$ converges to this limit. A key point
consists in proving that one can characterize any accumulation point of $v^\varepsilon_{t+s}$ for small $s>0$ only from the zeroes of
$\varphi(t,\cdot)$.

\section{Problem statement and preliminray results}
\label{sec:problem}
We consider the following partial differential equation in $\RR_+\times\RR^d$:
\begin{equation}
\label{eq:prob}
\left\{
\begin{array}{ll}
\partial_{t}u^{\ee}(t,x)=\frac{\ee}{2} \Delta u^{\ee}(t,x)+\frac{1}{\ee}u^{\ee}(t,x)R\left(x,v^{\ee}_{t}\right),\quad \forall t>0,\ x\in\RR^d &  \\ 
u^{\ee}(0,x)=\exp\left(-\frac{h_\varepsilon(x)}{\ee} \right), \quad\forall x\in\RR^d& 
\end{array} \right.
\end{equation}
with
\[
v^{\ee}_{t}=\left(v^{1,\ee}_{t},\dots,v^{r,\ee}_{t} \right),
\]
where
\[
v^{i,\ee}_{t}=\int_{\mathbb{R}^{d}}\Psi_{i}\left(x\right)u^{\ee}\left(t,x\right)dx, \quad 1\leq i\leq r,
\]
$R$ is a map from $\RR^d\times\RR^r$ to $\RR$ and $\Psi_i$ and $h_\varepsilon$ are maps from $\RR^d$ to $\RR$.

\medskip

Let us state our assumptions of $R$, $\psi_{i}$ and $h_\varepsilon$. 

\medskip
\begin{enumerate}
\item{\bf Assumptions on $\Psi_{i}$}

\medskip

$\bullet$ There exist $\Psi_{\text{min}}$ and $\Psi_{\text{max}}$, two positive real numbers such that
\begin{equation}
\label{eq:psiEst}
\Psi_{\text{min}}\leq \Psi_{i}(x)\leq\Psi_{\text{max}},\ \forall x\in\RR^d\text{ and }\Psi_{i}\in W^{2,\infty}\left(\mathbb{R}^{d} \right), \quad
\forall 1\leq i\leq r.
\end{equation}

\item\noindent{\bf  Assumptions on $R$}

\medskip

\begin{enumerate}
\item $R$ is continuous on $\RR^d\times\RR^r$.
\item There exists $A$ a positive real number such that\label{it:3}
\[
	-A\leq \partial_{v_{i}}R\left(x,v_{1},\dots,v_{r} \right)\leq -A^{-1},\quad \forall i \in \{1,\cdots,r\},\
        x\in\RR^d,\ v_1,\ldots, v_r\in \RR.
\]

\item There exist two positive constants $v_{\text{min}}<v_{\text{max}}$ such that \label{it:c}
\[
\min_{x\in\mathbb{R}^{d}}R(x,v)> 0 \text{ as soon as } \|v\|_{1}<v_{\text{min}},\text{ and }\max_{x\in\mathbb{R}^{d}}R(x,v)<0 \text{ as soon as } \|v\|_{1}>v_{\text{max}},
\]
where $\|v\|_{1}=|v_{1}|+\dots+|v_{r}|$.

\item Let $\mathcal{H}$ denotes the annulus $B\left(x,2v_{\text{max}} \right)\backslash B(x,v_{\text{min}}/2)$ (w.t.r. to the $\|\cdot\|_{1}$
  norm). Then\label{it:1}
\[
\sup_{v\in \mathcal{H}}\|R(\cdot,v)\|_{W^{2,\infty}}<M.
\]
\end{enumerate}
Note that the constant $2$ in the definition of $\mathcal{H}$ could be replaced by any constant strictly larger than $1$.

\item\noindent{\bf Assumptions on $h_\varepsilon$}

\medskip
\begin{enumerate}
\item $h_\varepsilon$ is Lipschitz-continuous on $\RR^d$, uniformly with respect to $\varepsilon>0$.
\item $h_\varepsilon$ converges in $L^\infty(\RR^d)$ as $\varepsilon\rightarrow 0$ to a function $h$.
\item For all $\varepsilon>0$ and all $1\leq i\leq d$,
  \begin{equation*}
    v_{\text{min}}\leq\int_{\RR^d} \Psi_i(x)\exp\left(-\frac{h_\varepsilon(x)}{\varepsilon}\right)dx\leq v_{\text{max}}.
  \end{equation*}
  In particular, $u_\varepsilon(0,x)$ is bounded in $L^1(\RR^d)$.\label{it:borne-h_eps}
\end{enumerate}
\end{enumerate}

Note that the limit $h$ of $h_\varepsilon$ is continuous, and hence, in order to satisfy Assumption~\ref{it:borne-h_eps}, it must
satisfy $h(x)\geq 0$ for all $x\in\RR^d$.

This type of assumptions is standard in this
domain~\cite{perthame-barles-08,barles-mirrahimi-al-09,lorz-mirrahimi-al-11,mirrahimi-roquejoffre-16}, but the previous references
only studied the case $r=1$. They are nearly identical (except for $h_\varepsilon$ and for linear or quadratic bounds at infinity,
see Corollary~\ref{cor:HJ-Var} in Section~\ref{sec:HJ-VAR}) as
those of~\cite{lorz-mirrahimi-al-11}. The case $r\geq 2$ was only studied in~\cite{champagnat-jabin-11}, but for a very specific form
of the function $R$, and in~\cite{perthame-barles-08} but without the convergence to the Hamilton-Jacobi problem.


\bigskip

Now, we present some preliminary results which are needed to study the asymptotic behaviour of the solution
of~\eqref{eq:prob}. The first result, Proposition~\ref{prop:prelimEstimates}, gives preliminary estimates on the solution
of~\eqref{eq:prob}. The second one, Theorem~\ref{thm:existence}, provides the existence and uniqueness of the solution of the
equation. These two results are direct adaptations of the results
of~\cite{perthame-barles-08,barles-mirrahimi-al-09,lorz-mirrahimi-al-11}. We detail the proof of the first one for sake of
completeness.

\begin{prop}(A priori estimates)
\label{prop:prelimEstimates}
Suppose that there exists a weak solution $u^{\ee}$ in $C\left(\mathbb{R}_{+};L^{1}\left(\mathbb{R}^{d} \right)\right)$. We
have, for all positive time $t$ and all $i\in\{1,\ldots,r\}$, 
 \[
v_{\textnormal{min}}-\frac{A\ee^{2}\Psi_{\textnormal{min}}}{\|\Psi_{i}\|_{W^{2,\infty}}}\leq v^{i,\ee}_{t}\leq v_{\textnormal{max}}+\frac{A\ee^{2}\Psi_{\textnormal{min}}}{\|\Psi_{i}\|_{W^{2,\infty}}},
\]
 \begin{equation}
 \label{eq:unifBound}
v_{\textnormal{min}}-\frac{A\ee^{2}\Psi_{\textnormal{min}}}{\inf_{1\leq i\leq r}\|\Psi_{i}\|_{W^{2,\infty}}}\leq \|v^{\ee}_{t} \|_{1}\leq v_{\textnormal{max}}+\frac{A\ee^{2}\Psi_{\textnormal{min}}}{\inf_{1\leq i\leq r}\|\Psi_{i}\|_{W^{2,\infty}}}
\end{equation}
and
 \begin{equation}
\label{eq:BoundU} 
\Psi_{\textnormal{max}}^{-1}\left(v_{\textnormal{min}}-\frac{A\ee^{2}\Psi_{\textnormal{min}}}{\inf_{1\leq i\leq r}\|\Psi_{i}\|_{W^{2,\infty}}}\right)\leq \int_{\mathbb{R}^{d}}u^{\ee}(t,x)dx\leq\left( v_{\textnormal{max}}+\frac{A\ee^{2}\Psi_{\textnormal{min}}}{\inf_{1\leq i\leq r}\|\Psi_{i}\|_{W^{2,\infty}}}\right)\Psi^{-1}_{\textnormal{min}}
\end{equation}
\end{prop}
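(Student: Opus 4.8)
The plan is to turn each of the three estimates into an \emph{a priori} differential inequality for a scalar quantity and to close it by a first-exit (barrier) argument. First I would derive an ODE for $v^{i,\ee}_t$ by testing the weak formulation of~\eqref{eq:prob} against $\Psi_i$, which is admissible since $\Psi_i\in W^{2,\infty}(\RR^d)$ and $u^\ee(t,\cdot)\in L^1(\RR^d)$; moving the Laplacian onto $\Psi_i$ gives, for a.e.\ $t>0$,
\[
\frac{d}{dt}v^{i,\ee}_t=\frac{\ee}{2}\int_{\RR^d}u^\ee(t,x)\,\Delta\Psi_i(x)\,dx+\frac{1}{\ee}\int_{\RR^d}\Psi_i(x)\,u^\ee(t,x)\,R(x,v^\ee_t)\,dx,
\]
and summing over $i$ gives the analogous identity for $\|v^\ee_t\|_1=\sum_i v^{i,\ee}_t$ (note $u^\ee\ge 0$, so each $v^{i,\ee}_t\ge 0$). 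I would record the elementary bounds $\Psi_{\text{min}}\!\int u^\ee(t,\cdot)\le v^{i,\ee}_t\le\Psi_{\text{max}}\!\int u^\ee(t,\cdot)$ for every $i$ and $\|\Delta\Psi_i\|_\infty\le\|\Psi_i\|_{W^{2,\infty}}$ (up to a dimensional constant), so that the diffusion term above is $O(\ee)$ whereas the reaction term carries the factor $1/\ee$.

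The second ingredient is a quantitative form of the sign condition on $R$. By Assumption~\ref{it:c} and continuity, $R(x,v)\le 0$ whenever $\|v\|_1\ge v_{\text{max}}$ and $R(x,v)\ge 0$ whenever $\|v\|_1\le v_{\text{min}}$; combining this with Assumption~\ref{it:3} ($\partial_{v_j}R\le -A^{-1}$) along the segment joining $v$ to its rescaling $v\,v_{\text{max}}/\|v\|_1$, whose coordinates are $\le v_j$ since $v_j\ge 0$ (and symmetrically with $v\,v_{\text{min}}/\|v\|_1$), one obtains
\[
R(x,v)\le -A^{-1}\bigl(\|v\|_1-v_{\text{max}}\bigr)\ \ (\|v\|_1\ge v_{\text{max}}),\qquad R(x,v)\ge A^{-1}\bigl(v_{\text{min}}-\|v\|_1\bigr)\ \ (\|v\|_1\le v_{\text{min}}),
\]
uniformly in $x\in\RR^d$. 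Thus, as soon as $\|v^\ee_t\|_1$ leaves $[v_{\text{min}},v_{\text{max}}]$, the reaction term in the ODE becomes sign-definite with magnitude proportional to the overshoot.

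Finally I would run the barrier argument. By Assumption~\ref{it:borne-h_eps} one has $v_{\text{min}}\le v^{i,\ee}_0\le v_{\text{max}}$, so at $t=0$ every bound holds with room to spare. If $v^{i,\ee}_t$ (continuous in $t$) first left the claimed interval, say at the upper end $v_{\text{max}}+\delta_{i,\ee}$ with $\delta_{i,\ee}$ the correction in the statement, then at that time $\tau$ one would have $\|v^\ee_\tau\|_1\ge v^{i,\ee}_\tau>v_{\text{max}}$; inserting the estimate on $R$ into the ODE and using $\int u^\ee(\tau,\cdot)\le v^{i,\ee}_\tau/\Psi_{\text{min}}$, the precise value of $\delta_{i,\ee}$ makes the negative reaction term strictly dominate the $O(\ee)$ diffusion term, so $\tfrac{d}{dt}v^{i,\ee}_t\big|_{t=\tau}<0$ — a contradiction; the lower end is handled the same way, invoking in addition the lower bound on $\int u^\ee$ so that $\|v^\ee_\tau\|_1<v_{\text{min}}$. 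The same reasoning carried out for $\|v^\ee_t\|_1$ (test function $\sum_i\Psi_i$) yields~\eqref{eq:unifBound}, and then~\eqref{eq:BoundU} is immediate from the pointwise-in-$t$ bounds on the $v^{i,\ee}_t$ together with $\Psi_{\text{min}}\!\int u^\ee\le v^{i,\ee}_t\le\Psi_{\text{max}}\!\int u^\ee$. The main obstacle is this closing step: one has to check that the diffusion term — which, unlike in the total-mass equation, does not integrate to zero — is dominated by the reaction term with a margin of exactly order $\ee^2$, which is what pins down the quantitative form of the $R$-estimate; one must also make sure $t\mapsto v^{i,\ee}_t$ is regular enough (from the weak formulation) to differentiate it at $\tau$, and must remember that the smallness of a single $v^{i,\ee}_t$ does not by itself render $R(\cdot,v^\ee_t)$ sign-definite, so the control of $\int u^\ee$ has to be brought in before the $R$-estimate applies to the lower bounds.
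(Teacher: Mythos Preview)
Your proposal follows essentially the same route as the paper: test the equation against $\Psi_i$ to obtain an ODE for $v^{i,\ee}_t$, bound the diffusion contribution by $\ee\,\|\Psi_i\|_{W^{2,\infty}}\,v^{i,\ee}_t/\Psi_{\text{min}}$, use Assumptions~(b) and~(c) to make the reaction term sign-definite with a quantitative overshoot, and close by a first-exit argument from the initial bounds in Assumption~(c). Your explicit inequality $R(x,v)\le -A^{-1}(\|v\|_1-v_{\text{max}})$ is exactly what the paper uses implicitly, and your caution about the lower bound --- that smallness of a single $v^{i,\ee}_t$ does not force $\|v^\ee_t\|_1<v_{\text{min}}$ --- is well taken: the paper just writes ``the second bound is obtained similarly'' without addressing this asymmetry.
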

\begin{proof}
Usual localization techniques for integration by parts entails, for all $1\leq i\leq r$,
\begin{equation*}
\partial_{t}v^{i,\ee}_{t}
=\ee\int_{\mathbb{R}^{d}}\Delta\Psi_{i}(x) u^{\ee}(t,x)dx+\frac{1}{\ee}\int_{\mathbb{R}^{d}}\Psi_{i}(x)u^{\ee}(t,x)R\left(x,v^{\ee}_{t} \right)dx.
\end{equation*}
Assumptions \eqref{eq:psiEst} on $\Psi_{i}$ leads to
 \begin{multline*}
-\frac{\ee\|\Psi_{i}\|_{W^{2,\infty}}}{\Psi_{\text{min}}}\int_{\mathbb{R}^{d}} \Psi_{i}(x) u^{\ee}(t,x)dx+\frac{1}{\ee}v^{i,\ee}_{t}\min_{x\in\mathbb{R}^{d}}R(x,v^{\ee}_{t})\leq\partial_{t}v^{i,\ee}_{t}\\
\leq\frac{\ee\|\Psi_{i}\|_{W^{2,\infty}}}{\Psi_{\text{min}}}\int_{\mathbb{R}^{d}}\Psi_{i}(x)  u^{\ee}(t,x)dx+\frac{1}{\ee}v^{i,\ee}_{t}\max_{x\in\mathbb{R}^{d}}R(x,v^{\ee}_{t}),
\end{multline*}
that is
 \begin{equation*}
-\frac{\ee\|\Psi_{i}\|_{W^{2,\infty}}}{\Psi_{\text{min}}}v^{i,\ee}_{t}+\frac{1}{\ee}v^{i,\ee}_{t}\min_{x\in\mathbb{R}^{d}}R(x,v^{\ee}_{t})\leq\partial_{t}v^{i,\ee}_{t}
\leq\frac{\ee\|\Psi_{i}\|_{W^{2,\infty}}}{\Psi_{\text{min}}}v^{i,\ee}_{t}+\frac{1}{\ee}v^{i,\ee}_{t}\max_{x\in\mathbb{R}^{d}}R(x,v^{\ee}_{t}).
\end{equation*}
Now, if $v^{i,\ee}_{t}>v_{\text{max}}+\frac{A\ee^{2}\Psi_{\text{min}}}{\|\Psi_{i}\|_{W^{2,\infty}}}$ (we recall that $M$ is the upper bound on the
derivatives of $R$ w.r.t.\ $v$), then $R(x,v^{\ee}_{t})<-\frac{\ee^{2}\Psi_{\text{min}}}{\|\Psi_{i}\|_{W^{2,\infty}}}$ for all $x$,  which
means that the derivative of $v^{i,\ee}_{t}$ becomes negative. Because of Assumption~\ref{it:borne-h_eps}, we deduce that
\[
v^{i,\ee}_{t}\leq v_{\text{max}}+\frac{A\ee^{2}\Psi_{\text{min}}}{\|\Psi_{i}\|_{W^{2,\infty}}}.
\]
The second bound is obtained similarly and the last bound follows easily from the first one in conjunction with \eqref{eq:psiEst}.
\end{proof}
\begin{thm}
  \label{thm:existence}
  For $\varepsilon>0$ small enough, there exists a unique solution $u^{\ee}$ in $C^{1}(\mathbb{R}_{+},\ L^{1}(\mathbb{R}^{d}))$
  of~\eqref{eq:prob}.
\end{thm}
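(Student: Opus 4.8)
The plan is to recast \eqref{eq:prob} as a fixed-point problem for the finite-dimensional path $t\mapsto v^\ee_t$ and solve it by the Banach fixed point theorem locally in time, then globalize and upgrade the regularity using Proposition~\ref{prop:prelimEstimates}. First I would note that for a fixed continuous path $w=(w_t)_{t\in[0,T_0]}$ in $\RR^r$, the \emph{linear} equation $\partial_t u=\frac\ee2\Delta u+\frac1\ee u\,R(\cdot,w_t)$ with datum $u^\ee(0,\cdot)=\exp(-h_\ee/\ee)$ has a unique mild solution $u[w]$, given by the variation-of-constants (Duhamel) formula with the heat semigroup $S^\ee_t=e^{t\ee\Delta/2}$, equivalently by a Feynman--Kac formula; in particular $u[w]\ge 0$ since $u^\ee(0,\cdot)\ge 0$. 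To work with a globally bounded, globally Lipschitz-in-$v$ nonlinearity I would replace $R$ by a truncation $\tilde R$ that coincides with $R$ on $\RR^d\times\{v:\ \|v\|_1\le 2v_{\max}\}$, is bounded by some constant $C$, is Lipschitz in $v$ (with a constant proportional to the $A$ of Assumption~\ref{it:3}), and still obeys the sign conditions of Assumption~\ref{it:c} (e.g. radial projection onto $\{\|v\|_1\le 2v_{\max}\}$, which is admissible because $R(\cdot,v)<0$ for $\|v\|_1>v_{\max}$ and $R(\cdot,v)$ is bounded for $\|v\|_1\le 2v_{\max}$ by Assumption~\ref{it:1} together with the Lipschitz bound). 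Since $u^\ee(0,\cdot)\in L^1(\RR^d)$ by Assumption~\ref{it:borne-h_eps} and $S^\ee_t$ is a contraction on $L^1$, one has $\|u[w](t)\|_{L^1}\le e^{Ct/\ee}\|u^\ee(0)\|_{L^1}$. Then I set $\Phi(w)^i_t=\int_{\RR^d}\Psi_i(x)\,u[w](t,x)\,dx$ and look for a fixed point of $\Phi$ in $C([0,T_0],\RR^r)$.

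The heart of the argument is that $\Phi$ is a contraction for $T_0=T_0(\ee)$ small. Given $w,\bar w$, splitting $u[w]\tilde R(\cdot,w)-u[\bar w]\tilde R(\cdot,\bar w)=(u[w]-u[\bar w])\tilde R(\cdot,w)+u[\bar w]\bigl(\tilde R(\cdot,w)-\tilde R(\cdot,\bar w)\bigr)$ in the Duhamel formula, and using $L^1$-contractivity of $S^\ee_t$, $|\tilde R|\le C$, the Lipschitz bound $\|\tilde R(\cdot,w_s)-\tilde R(\cdot,\bar w_s)\|_\infty\le C\|w_s-\bar w_s\|_1$, and $\|u[\bar w](s)\|_{L^1}\le e^{Cs/\ee}\|u^\ee(0)\|_{L^1}$, Grönwall's lemma gives
\[ \sup_{t\le T_0}\|u[w](t)-u[\bar w](t)\|_{L^1}\ \le\ \frac{C(\ee)\,T_0\,e^{C(\ee)T_0/\ee}}{\ee}\ \sup_{t\le T_0}\|w_t-\bar w_t\|_1, \]
hence $\|\Phi(w)-\Phi(\bar w)\|_\infty\le \Psi_{\max}\,\frac{C(\ee)T_0 e^{C(\ee)T_0/\ee}}{\ee}\,\|w-\bar w\|_\infty$, which is a strict contraction once $T_0$ is small. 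Banach's theorem then yields a unique $v^\ee\in C([0,T_0],\RR^r)$, hence a unique mild solution $u^\ee=u[v^\ee]$ on $[0,T_0]$ of the equation with $\tilde R$ in place of $R$. Because $\tilde R$ satisfies Assumption~\ref{it:c}, Proposition~\ref{prop:prelimEstimates} applies to this solution and gives a bound on $\int_{\RR^d}u^\ee(t,\cdot)$ that is \emph{uniform in $t$}; since $T_0$ depends only on $\ee$ and on such a uniform bound, I can restart the fixed point on $[T_0,2T_0]$, and inductively obtain a solution on all of $\RR_+$.

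Finally I would remove the truncation. By Proposition~\ref{prop:prelimEstimates}, the global solution satisfies $v^{i,\ee}_t\le v_{\max}+A\ee^2\Psi_{\min}/\inf_j\|\Psi_j\|_{W^{2,\infty}}$, which is $\le 2v_{\max}$ once $\ee$ is small enough; as $v^{i,\ee}_t\ge0$ automatically, $v^\ee_t$ stays in the region where $\tilde R\equiv R$, so $u^\ee$ solves \eqref{eq:prob}. Uniqueness for \eqref{eq:prob} follows the same way: any solution in $C(\RR_+,L^1)$ obeys the \emph{a priori} bounds of Proposition~\ref{prop:prelimEstimates}, hence (for $\ee$ small) it solves the $\tilde R$-problem, hence equals $u^\ee$ by the uniqueness just established (using the standard equivalence between weak/strong and mild solutions of this linear-in-$u$ equation). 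The $C^1(\RR_+,L^1(\RR^d))$ regularity is then a routine bootstrap: $t\mapsto v^\ee_t$ is continuous, so $t\mapsto R(\cdot,v^\ee_t)$ is bounded and continuous, and differentiating the Duhamel formula (using the smoothing of $S^\ee_t$) shows $t\mapsto u^\ee(t,\cdot)$ is $C^1$ into $L^1$; equivalently one differentiates $v^{i,\ee}_t$ as in the proof of Proposition~\ref{prop:prelimEstimates}. I expect the only genuinely delicate points to be the bookkeeping ones: choosing the truncation $\tilde R$ so that Proposition~\ref{prop:prelimEstimates} still applies to the truncated problem and so that its conclusion permits un-truncating (this is exactly where ``$\ee$ small enough'' enters), and checking that the local existence time $T_0$ stays bounded below along the iteration. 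The nonlocality of the nonlinearity is harmless here, since it enters only through the scalar feedback $v^\ee_t$, which is precisely what makes the finite-dimensional fixed point work.
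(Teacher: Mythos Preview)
Your proposal is correct and follows essentially the same strategy as the paper: truncate $R$ to a bounded function $\tilde R$ coinciding with $R$ on the annulus $\mathcal H$, prove existence and uniqueness for the truncated problem, and then invoke the a priori bounds of Proposition~\ref{prop:prelimEstimates} (which hold equally for $\tilde R$) to see that for $\varepsilon$ small the solution stays in the region where $\tilde R=R$. The only genuine difference is one of presentation: the paper dispatches the existence/uniqueness step for the truncated problem in one line by citing Theorem~3.13 of~\cite{perthame15} (``since $\tilde R$ is bounded, the nonlinearity is Lipschitz''), whereas you spell out an explicit Banach fixed-point argument on the finite-dimensional path $t\mapsto v^\varepsilon_t$. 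Your reduction to a finite-dimensional fixed point is a clean way to exploit that the nonlocality enters only through $v^\varepsilon$, and it makes the proof self-contained; the paper's version is shorter but relies on an external reference. Both routes land on exactly the same truncation/untruncation mechanism and the same use of the a priori estimates, so there is no substantive divergence.
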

\begin{proof}
We do not detail the proof since it is a direct adaptation of the unidimensional case given in~\cite{barles-mirrahimi-al-09}. We
simply mention that, because of~\eqref{eq:BoundU},
\begin{equation}
\label{eq:prob2}
\partial_{t}u^{\ee}(t,x)=\ee \Delta u^{\ee}(t,x)+\frac{1}{\ee}u^{\ee}(t,x)\tilde{R}\left(x,v^\varepsilon_{t}\right)
\end{equation}
where
\[
\tilde{R}(x,v)=
\left\{
\begin{array}{ll}
R(x,v)& \text{ if } \norm{v}_{1}\in[v_{\text{min}}/2,2v_{\text{max}}],\\
R(x,2v_{\text{max}})& \text{ if } \norm{v}_{1}>2v_{\text{max}},\\
R(x,v_{\text{min}}/2)& \text{ if } \norm{v}_{1}<v_{\text{min}}/2.
\end{array}\right.
\]
Since $\tilde{R}$ is bounded, the non-linearity in~\eqref{eq:prob2} is Lipschitz, so we can use Theorem 3.13 in~\cite{perthame15} to
obtain the existence and uniqueness of a solution in $C([0,T],\ L^{1}(\mathbb{R}^{d}))$. 
\end{proof}

Since $R(x,v^\varepsilon_t)$ is a Lipschitz function, one can actually get higher regularity from the regularizing effect of
the Laplace operator. However, since we plan to extend our method to more general mutation operators, we shall only make use in the
sequel of the fact that $u^{\ee}\in C^{1}(\mathbb{R}_{+},\ L^{1}(\mathbb{R}^{d}))$.
\section{Feynman-Kac representation of the solution}
\label{sec:FK}

The purpose of this section is to prove the following integral representation of the solution of \eqref{eq:prob} through the
so-called Feynman-Kac formula. 
\begin{thm}(Feynman-Kac representation of the solution of \eqref{eq:prob})
\label{thm:FK}
Let $u^{\ee}$ be the unique weak solution of \eqref{eq:prob}, then
\begin{equation}
  \label{eq:FK-thm}
u^\varepsilon(t,x)=\mathbb{E}_{x}\left[\exp\left(-\frac{h_\varepsilon(X^\varepsilon_{t})}{\ee}+\frac{1}{\ee}\int_{0}^{t}R(X^\varepsilon_{t},v^{\ee}_{t-s})ds\right) \right],\quad \forall (t,x)\in \mathbb{R}_{+}\times\mathbb{R}^{d},
\end{equation}
where
for all $x\in\RR^d$, $\EE_x$ is the expectation associated to the probability measure $\PP_x$, under which $X^\varepsilon_0=x$
almost surely and the process $B_t=(X^\varepsilon_t-x)/\sqrt{\varepsilon}$ is 
a standard Brownian motion in $\RR^d$.
\end{thm}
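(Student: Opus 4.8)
The plan is to verify the Feynman--Kac formula~\eqref{eq:FK-thm} by checking that its right-hand side solves the linear PDE~\eqref{eq:prob} (with the nonlinear term frozen along the known solution $u^\varepsilon$) and then invoke uniqueness for that linear problem. First I would fix the solution $u^\varepsilon\in C^1(\RR_+,L^1(\RR^d))$ given by Theorem~\ref{thm:existence}, set $b(s,y):=\tfrac{1}{\varepsilon}R(y,v^\varepsilon_s)$, and observe that $s\mapsto v^\varepsilon_s$ is continuous and bounded (Proposition~\ref{prop:prelimEstimates}), so $b$ is a bounded, continuous, time-dependent potential. Then~\eqref{eq:prob} becomes the \emph{linear} parabolic equation $\partial_t u^\varepsilon = \tfrac{\varepsilon}{2}\Delta u^\varepsilon + b(t,\cdot) u^\varepsilon$ with the given initial datum. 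For this linear equation the Feynman--Kac representation with the rescaled Brownian motion $X^\varepsilon$ (whose generator is $\tfrac{\varepsilon}{2}\Delta$) is classical; the only subtlety is to justify it at the level of regularity we actually have ($C^1$ in time, $L^1$ in space, weak solution), rather than assuming classical solutions.

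The cleanest route I would take is the following. Define
\[
w^\varepsilon(t,x):=\EE_x\!\left[\exp\!\left(-\frac{h_\varepsilon(X^\varepsilon_t)}{\varepsilon}+\frac1\varepsilon\int_0^t R(X^\varepsilon_s,v^\varepsilon_{t-s})\,ds\right)\right].
\]
One shows $w^\varepsilon$ is a (weak, or mild) solution of the same linear equation and concludes $w^\varepsilon=u^\varepsilon$ by uniqueness. Concretely, I would use the Duhamel/mild formulation: writing $(P^\varepsilon_t)_{t\ge0}$ for the heat semigroup with generator $\tfrac{\varepsilon}{2}\Delta$, both $u^\varepsilon$ and $w^\varepsilon$ should satisfy
\[
f(t,\cdot)=P^\varepsilon_t\Big(e^{-h_\varepsilon/\varepsilon}\Big)+\int_0^t P^\varepsilon_{t-s}\!\left(\tfrac1\varepsilon R(\cdot,v^\varepsilon_s)\,f(s,\cdot)\right)ds.
\]
For $u^\varepsilon$ this is a standard consequence of the variation-of-constants formula applied to~\eqref{eq:prob}, using $u^\varepsilon\in C^1(\RR_+,L^1)$. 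For $w^\varepsilon$ it follows from the Markov property of $X^\varepsilon$: conditioning at time $s$ and splitting the time-integral in the exponential at $s$, one gets $w^\varepsilon(t,x)=\EE_x[\exp(\tfrac1\varepsilon\int_0^s R(X^\varepsilon_r,v^\varepsilon_{t-r})dr)\,w^\varepsilon(t-s,X^\varepsilon_s)]$, and then a first-order Taylor expansion of the exponential as $s\to0$ (legitimate since $R(\cdot,v^\varepsilon_\cdot)$ is bounded) together with the continuity in $s$ of $w^\varepsilon(t-s,\cdot)$ produces the integral equation above. Since the map $f\mapsto P^\varepsilon_t(e^{-h_\varepsilon/\varepsilon})+\int_0^t P^\varepsilon_{t-s}(\tfrac1\varepsilon R(\cdot,v^\varepsilon_s)f(s,\cdot))\,ds$ is a contraction on $C([0,T],L^\infty(\RR^d))$ for $T$ small (because $R(\cdot,v^\varepsilon_\cdot)$ is bounded and $P^\varepsilon_t$ is a contraction on $L^\infty$), its fixed point is unique, so $u^\varepsilon=w^\varepsilon$ on $[0,T]$, and iterating over successive intervals gives equality for all $t\ge0$. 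Note the time-reversal $v^\varepsilon_{t-s}$ in the integrand is exactly what makes the Markov/Chapman--Kolmogorov bookkeeping consistent.

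Alternatively — and this is probably what the authors do — one applies Itô's formula directly: for fixed $t$, the process $s\mapsto M_s:=u^\varepsilon(t-s,X^\varepsilon_s)\exp(\tfrac1\varepsilon\int_0^s R(X^\varepsilon_r,v^\varepsilon_{t-r})dr)$ on $[0,t]$ is a local martingale, because the drift terms produced by $\partial_t$, by $\tfrac{\varepsilon}{2}\Delta$ acting along $X^\varepsilon$, and by the exponential weight cancel precisely by virtue of~\eqref{eq:prob}; boundedness of $R(\cdot,v^\varepsilon_\cdot)$ and enough integrability of $u^\varepsilon$ upgrade it to a true martingale, and equating $\EE_x[M_0]=\EE_x[M_t]$ yields~\eqref{eq:FK-thm}. \textbf{The main obstacle} is the low spatial regularity: $u^\varepsilon$ is only known to be a weak $L^1$ solution that is $C^1$ in time, so Itô's formula cannot be applied literally. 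I would handle this either by the mild-formulation argument above (which never differentiates $u^\varepsilon$ in space), or by a mollification/regularization argument — convolving~\eqref{eq:prob} with a smooth kernel, applying Itô to the smoothed solutions, and passing to the limit using the uniform $L^1$ and $L^\infty$ bounds from Proposition~\ref{prop:prelimEstimates} and the boundedness of $\tilde R$. The remaining verifications (continuity and boundedness of $s\mapsto v^\varepsilon_s$, integrability of the exponential weight, the contraction estimate) are routine given the a priori bounds already established.
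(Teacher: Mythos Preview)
Your proposal is correct and follows essentially the same route as the paper: both define the Feynman--Kac expression, show via the Markov property that it is a mild (Duhamel) solution of the linear equation obtained by freezing $v^\varepsilon$, observe that the weak solution $u^\varepsilon$ is also a mild solution, and conclude by uniqueness of mild solutions. The paper carries this out in $C(\RR_+,L^1(\RR^d))$ rather than $L^\infty$, proving uniqueness via Gronwall instead of a contraction; it also explicitly verifies that the Feynman--Kac expression lies in $C(\RR_+,L^1)$ and derives the mild equation by writing $\exp(\int_0^t\cdots)-1$ as an integral and applying Markov at each time, rather than via the conditioning-plus-Taylor description you sketch---but these are cosmetic differences in the same argument.
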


Before proving this result, we recall usual notions of weak solutions to problem \eqref{eq:prob} (cf.\
e.g.~\cite{pazy,lunardi,engel,perthame15}). We say that a function $u$ in $C(\mathbb{R}_{+},L^{1}(\mathbb{R}^{d}))$ is a mild
solution of problem \eqref{eq:prob} if it satisfies the following integral equation
\begin{equation}
\label{eq:mild}
u(t,x)=P_{t}^{\ee}g_{\ee}(x)+\frac{1}{\ee}\int_{0}^{t}P^{\ee}_{t-s}\left(u(s,x)R(x,v^{\ee}_{s})\right)\ ds,
\end{equation}
where $g_\varepsilon(x)=\exp(-h_\varepsilon(x)/\varepsilon)$ and $(P^{\ee}_{t})_{t\in\mathbb{R}_{+}}$ is the standard heat semi-group defined on $L^{1}(\mathbb{R}^{d})$ by
\[
P^{\ee}_{t}f(x)=\int_{\mathbb{R}^{d}}\frac{1}{(2\pi t\ee )^{n/2}}e^{\frac{-\norm{x-y}^{2}}{2t\ee}}f(y)\ dy,\quad \forall f\in L^{1}(\mathbb{R}^{d}).
\]
We also say that a function $u$ in $C(\mathbb{R}_{+},L^{1}(\mathbb{R}^{d}))$ is a weak solution of problem \eqref{eq:prob} if for any compactly supported test function $\varphi$ of $C^{\infty}([0,\infty)\times \mathbb{R}^{d})$, we have
\[
\int_{\mathbb{R}_{+}\times\mathbb{R}^{d}}u(t,x)\left(-\partial_{t}\varphi(t,x)-\frac{\ee}{2}\Delta\varphi(t,x)\ \right) dx\, dt = \frac{1}{\ee}\int_{\mathbb{R}_{+}\times\mathbb{R}^{d}}u(t,x)R(x,v^{\ee}_{t})\varphi(t,x) dx\, dt +\int_{\mathbb{R}^{d}}g_{\ee}(x)\varphi(0,x)dx.
\]
We point out that the notion of weak solution given above is the one for which existence and uniqueness hold in Theorem~\ref{thm:existence}.
\begin{lem}
\label{lem:mild}
	Let $\bar{u}^{\ee}$ be defined as the right-hand-side of~\eqref{eq:FK-thm}. The function $\bar{u}^{\ee}$ belongs to $C(\mathbb{R}_{+},L^{1}(\mathbb{R}^{d}))$ and is a mild solution of problem \eqref{eq:prob}.
\end{lem}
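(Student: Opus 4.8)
The plan is to prove that $\bar{u}^{\ee}$ defined by the right-hand side of~\eqref{eq:FK-thm} is a mild solution, i.e.\ satisfies~\eqref{eq:mild}. First I would establish the regularity claim $\bar{u}^{\ee}\in C(\RR_+,L^1(\RR^d))$. Since $h_\varepsilon$ is Lipschitz uniformly in $\varepsilon$ and $R$ is bounded on the relevant range (recall from Proposition~\ref{prop:prelimEstimates} and the truncation in~\eqref{eq:prob2} that only values of $R(\cdot,v)$ with $\|v\|_1$ in a fixed compact set matter, where $R$ is bounded, say by some constant), the exponent in~\eqref{eq:FK-thm} is bounded above by $C/\ee + tM/\ee$, so after the Gaussian change of variables $y = x + \sqrt{\varepsilon}\,B_t$ one sees $\bar u^\ee(t,\cdot)$ is dominated by a constant times a Gaussian convolution of $e^{-h_\varepsilon/\ee}$, hence in $L^1$; continuity in $t$ follows from dominated convergence together with the a.s.\ continuity of $t\mapsto X^\varepsilon_t$ and of $s\mapsto v^{\ee}_s$ (the latter from $u^{\ee}\in C^1(\RR_+,L^1)$, Theorem~\ref{thm:existence}).

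The core of the argument is the integral identity~\eqref{eq:mild}. The natural route is the Markov property of $X^\varepsilon$ combined with a Duhamel/variation-of-constants manipulation of the exponential functional. Write $F_t = \frac{1}{\ee}\int_0^t R(X^\varepsilon_s, v^{\ee}_{t-s})\,ds$ for the additive functional appearing in the exponent (note the time-reversal $t-s$ in the potential, which must be tracked carefully). The identity
\[
e^{F_t} = 1 + \frac{1}{\ee}\int_0^t R(X^\varepsilon_s, v^{\ee}_{t-s})\, e^{F_t - F_s}\, ds
\]
holds pathwise by the fundamental theorem of calculus applied to $s\mapsto e^{F_t-F_s}$. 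Multiplying by $e^{-h_\varepsilon(X^\varepsilon_t)/\ee}$, taking $\EE_x$, and conditioning on $\mathcal{F}_s$ (the Brownian filtration): the first term gives $\EE_x[e^{-h_\varepsilon(X^\varepsilon_t)/\ee}] = P^{\ee}_t g_\ee(x)$, and in the second term, by the Markov property at time $s$, the conditional expectation of $e^{-h_\varepsilon(X^\varepsilon_t)/\ee} e^{F_t - F_s}$ given $\mathcal{F}_s$ equals $\bar u^\ee(t-s, X^\varepsilon_s)$ — here one uses that, given $X^\varepsilon_s$, the remaining increment of $X^\varepsilon$ over $[s,t]$ is an independent rescaled Brownian motion started afresh, and that $F_t - F_s = \frac{1}{\ee}\int_0^{t-s} R(X^\varepsilon_{s+u}, v^{\ee}_{t-s-u})\,du$ has exactly the form of the functional defining $\bar u^\ee$ at time $t-s$. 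Hence the second term becomes $\frac{1}{\ee}\int_0^t \EE_x[R(X^\varepsilon_s, v^{\ee}_{t-s})\, \bar u^\ee(t-s, X^\varepsilon_s)]\, ds = \frac{1}{\ee}\int_0^t P^{\ee}_s\big(R(\cdot, v^{\ee}_{t-s})\,\bar u^\ee(t-s,\cdot)\big)(x)\, ds$, and a change of variable $s\mapsto t-s$ turns this into the Duhamel term in~\eqref{eq:mild}.

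Several technical points need care, and I expect the main obstacle to be making the conditioning step fully rigorous and justifying the interchanges of expectation and integration. One must check Fubini's theorem for $\EE_x \int_0^t (\cdots)\, ds$, which is fine given the uniform bound $|R|\le M$ on the truncated range and the Gaussian integrability established above. One must also verify that $\bar u^\ee(t-s,\cdot)$ is genuinely a measurable function of $X^\varepsilon_s$ that can be plugged into the Markov property — this is where joint measurability of $(t,x)\mapsto \bar u^\ee(t,x)$, which follows from the continuity already shown, is used. Finally, one should note that the potential $v^{\ee}_{\cdot}$ is deterministic (it is determined by the PDE solution $u^\ee$, not by the Brownian path), so it simply plays the role of a time-dependent coefficient and causes no adaptedness issues; the only subtlety it introduces is the bookkeeping of the time-reversal, which is why the Duhamel term comes out with $v^{\ee}_{t-s}$ reversed to $v^{\ee}_s$ after the substitution. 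Once~\eqref{eq:mild} is established, Lemma~\ref{lem:mild} is proved; uniqueness of the weak/mild solution (Theorem~\ref{thm:existence}) then identifies $\bar u^\ee$ with $u^\ee$, giving Theorem~\ref{thm:FK}.
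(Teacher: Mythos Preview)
Your argument for the mild formulation is essentially the same as the paper's: the identity $e^{F_t}=1+\frac{1}{\ee}\int_0^t R(X^\ee_s,v^\ee_{t-s})e^{F_t-F_s}\,ds$ (with $t$ fixed in the definition of $F_s$) combined with the Markov property at time $s$ is exactly the decomposition the paper uses, up to the change of variable $s\mapsto t-s$. One small caution: your sketch of $L^1$-continuity (``dominated convergence together with a.s.\ continuity of $t\mapsto X^\ee_t$'') only gives pointwise continuity in $x$; to upgrade to $C(\RR_+,L^1(\RR^d))$ you need either a uniform $L^1$ majorant for $t$ in a neighborhood or the explicit decomposition the paper carries out, which isolates the term $\|P^\ee_h g_\ee-g_\ee\|_{L^1}$ and invokes the strong continuity of the heat semigroup in $L^1$.
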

\begin{proof}
	Let $t$ and $h$ be two positive real numbers.
	\begin{align*}
	&\norm{\bar{u}^{\ee}(t+h,\cdot)-\bar{u}^{\ee}(t,\cdot)}_{L^{1}(\mathbb{R}^{d})}\\&=\int_{\mathbb{R}^{d}}\Bigg|\mathbb{E}_{x}\Big[g_{\ee}(X^{\ee}_{t+h})\exp\left(\frac{1}{\ee}\int_{0}^{t}R(X^{\ee}_{s},\
          v^{\ee}_{t+h-s})\ ds\right)\exp\left(\frac{1}{\ee}\int_{0}^{h}R(X^{\ee}_{t+s},\ v^{\ee}_{h-s})\ ds\right) \\ & \qquad\qquad
        -g_{\ee}(X^{\ee}_t)\exp\left(\frac{1}{\ee}\int_{0}^{t}R(X^{\ee}_{s},\ v^{\ee}_{t-s})\ ds\right)\Big]\Bigg|\ dx\\
	&\leq \int_{\mathbb{R}^{d}}\mathbb{E}_{x}\left[\exp\left(\frac{1}{\ee}\int_{0}^{t}R(X^{\ee}_{s},\ v^{\ee}_{t+h-s})\ ds\right)\left|\exp\left(\frac{1}{\ee}\int_{0}^{h}R(X^{\ee}_{t+s},\ v^{\ee}_{h-s})\ ds\right)g_{\ee}(X^{\ee}_{t+h})-g_{\ee}(X^{\ee}_{t})\right|\right]\ dx\\
	& \phantom{\leq}+\int_{\mathbb{R}^{d}}\mathbb{E}_{x}\left[g_{\ee}(X^{\ee}_{t})\left|\exp\left(\frac{1}{\ee}\int_{0}^{t}R(X^{\ee}_{s},\ v^{\ee}_{t+h-s})\ ds\right)-\exp\left(\frac{1}{\ee}\int_{0}^{t}R(X^{\ee}_{s},\ v^{\ee}_{t-s})\ ds\right) \right|\right]\ dx.
	\end{align*}
	Using to Hypothesis \eqref{it:1}, we get 
	\begin{align*}
	\norm{\bar{u}^{\ee}(t+h,\cdot)-\bar{u}^{\ee}(t,\cdot)}_{L^{1}(\mathbb{R}^{d})}&\leq \int_{\mathbb{R}^{d}}e^{\frac{Mt}{\ee}}\left\{\mathbb{E}_{x}\Big[g_{\ee}(X^{\ee}_{t+h})\left(e^{\frac{Mh}{\ee}}-1\right)\Big]\ +\mathbb{E}_{x}\Big[\left|g_{\ee}(X^{\ee}_{t+h})-g_{\ee}(X^{\ee}_{t})\right|\Big]\ \right\} dx\\
	&+\frac{M}{\varepsilon} e^{\frac{Mt}{\varepsilon}}\int_{\mathbb{R}^{d}}\mathbb{E}_{x}\left[g_{\ee}(X^{\ee}_{t})\right]\int_{0}^{t}\left|v^{\ee}_{t+h-s}-v^{\ee}_{t-s}\right|ds \ dx.
	\end{align*}
        Since $(P_{t})_{t\in\mathbb{R}_{+}}$ preserves the $L^{1}(\mathbb{R}^{d})$
        norm, we obtain
	\begin{align*}
	\norm{\bar{u}^{\ee}(t+h,\cdot)-\bar{u}^{\ee}(t,\cdot)}_{L^{1}(\mathbb{R}^{d})}&\leq \norm{g_{\ee}}_{L^{1}(\mathbb{R}^{d})}e^{\frac{Mt}{\ee}}\left(e^{\frac{Mh}{\ee}}-1 +\frac{M}{\varepsilon}\int_{0}^{t}\left|v^{\ee}_{s+h}-v^{\ee}_{s}\right|\ ds \right)
	\\&+ e^{\frac{Mt}{\ee}} \mathbb{E}\left[\int_{\mathbb{R}^{d}}\left|P_{h}^{\ee}g_{\ee}(x+X^{\ee}_{t})-g_{\ee}(x+X^{\ee}_{t}) \right|\ dx \right].
	\end{align*}

	%
	Since $v^{\ee}$ is continuous and using the translational invariance of Lebesgue measure, this finally leads to
	\[
	\norm{\bar{u}^{\ee}(t+h,\cdot)-\bar{u}^{\ee}(t,\cdot)}_{L^{1}(\mathbb{R}^{d})}\leq o_{h}(1)+ e^{\frac{Mt}{\ee}}\norm{P_{h}^{\ee}g_{\ee}-g_{\ee}}_{L^{1}(\mathbb{R}^{d})}\xrightarrow[h\to0]{}0,
	\]
	which proves that $\bar{u}^{\ee}$ is in $C(\mathbb{R}_{+},L^{1}(\mathbb{R}^{d}))$.

	We now prove that $\bar{u}^{\ee}$ is a mild solution of problem \eqref{eq:prob}. First, Markov property gives 
	\begin{align}
	\mathbb{E}_{x}\biggl[g_{\ee}(X^{\ee}_{t}) & \exp\left(\int_{t-s}^{t}R(X^{\ee}_{\theta},\ v^{\ee}_{t-\theta})\ d\theta \right)
          R(X^{\ee}_{t-s},\ v^{\ee}_{s})\biggr] \notag \\ &
        =\mathbb{E}_{x}\left[\mathbb{E}_{X^{\ee}_{t-s}}\left[g_{\ee}(X^{\ee}_{s})\exp\left(\int_{0}^{s}R(X^{\ee}_{\theta},\
              v^{\ee}_{s-\theta})\ d\theta \right)\right] R(X^{\ee}_{t-s},\ v^{\ee}_{s})\right] \notag \\ & =\mathbb{E}_{x}\left[u(s,X^{\ee}_{t-s}) R(X^{\ee}_{t-s},\ v^{\ee}_{s})\right].
	\label{eq:oneEq?}
	\end{align}
	Now, we have that
	\[
	\bar{u}^{\ee}(t,x)=\mathbb{E}_{x}\left[g_{\ee}(X^{\ee}_t)\left(\exp\left(\frac{1}{\ee}\int_{0}^{t}R(X^{\ee}_{s},\ v^{\ee}_{t-s})\ ds\right)-1\right) \right]+\mathbb{E}_{x}\left[g_{\ee}(X^{\ee}_{t}) \right]
	\]
	and
	
	\[
	\exp\left(\int_{0}^{t}R(X^{\ee}_{s},\ v^{\ee}_{t-s})\ ds\right)-1=\int_{0}^{t}\exp\left(\int_{t-s}^{t}R(X^{\ee}_{u},\ v^{\ee}_{t-u})\ du \right) R(X^{\ee}_{t-s},\ v^{\ee}_{s})ds.
	\]
	Hence \eqref{eq:oneEq?} entails
	\[
	\bar{u}^{\ee}(t,x)=\int_{0}^{t}\mathbb{E}_{x}\left[\bar{u}^{\ee}(s,X^{\ee}_{t-s})R(X^{\ee}_{t-s},v^{\ee}_{s}) \right]ds+\mathbb{E}_{x}\left[g_{\ee}(X^{\ee}_{t}) \right]=P_{t}^{\ee}g_{\ee}(x)+\int_{0}^{t}P^{\ee}_{t-s}\left(\bar{u}^{\ee}(s,x)R(x,v^{\ee}_{s})\right)\ ds.
	\]
This ends the proof that $\bar{u}^{\ee}$ is a mild solution of problem \eqref{eq:prob}.
\end{proof}
\begin{lem}
	Let $u\in C(\mathbb{R}_{+},L^{1}(\mathbb{R}^{d}))$ be a mild solution of problem \eqref{eq:prob}, then $u$ is a weak solution of problem \eqref{eq:prob}.
\end{lem}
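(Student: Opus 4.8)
The plan is to substitute the mild formula~\eqref{eq:mild} into the left-hand side of the weak formulation, move the operator $-\partial_t\varphi-\tfrac{\ee}{2}\Delta\varphi$ onto the test function using the self-adjointness of the heat semigroup, and then recognise the integrand as a total time derivative. Fix $\varphi\in C_c^\infty([0,\infty)\times\RR^d)$ with support in $[0,T)\times K$, write $g_\ee(x)=\exp(-h_\ee(x)/\ee)$, which belongs to $L^1(\RR^d)$ by Assumption~\ref{it:borne-h_eps}, and set $\rho(t,x):=\tfrac{1}{\ee}u(t,x)R(x,v^\ee_t)$; since $R(\cdot,v^\ee_t)$ is bounded on the range of $v^\ee$ (Hypothesis~\eqref{it:1} together with Proposition~\ref{prop:prelimEstimates}) and $u\in C(\RR_+,L^1(\RR^d))$, one has $\rho\in C(\RR_+,L^1(\RR^d))$, so all the integrals below are absolutely convergent. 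The only facts about the Gaussian semigroup I will use are: its kernel $p^\ee_\tau(x,y)=(2\pi\tau\ee)^{-d/2}e^{-\|x-y\|^2/(2\tau\ee)}$ is symmetric, so $\int P^\ee_\tau f\,\psi=\int f\,P^\ee_\tau\psi$ whenever $f\in L^1$ and $\psi\in L^\infty$; and for $\psi\in C_c^\infty(\RR^d)$ one has $P^\ee_0\psi=\psi$, $\tfrac{d}{d\tau}P^\ee_\tau\psi=\tfrac{\ee}{2}\Delta P^\ee_\tau\psi=P^\ee_\tau(\tfrac{\ee}{2}\Delta\psi)$, and $P^\ee_\tau\psi\to\psi$ pointwise as $\tau\downarrow0$.

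Plugging~\eqref{eq:mild} into $\int_{\RR_+\times\RR^d}u\,(-\partial_t\varphi-\tfrac{\ee}{2}\Delta\varphi)\,dx\,dt$ gives two terms. In the term coming from $P^\ee_tg_\ee$, self-adjointness moves $P^\ee_t$ onto $(-\partial_t\varphi-\tfrac{\ee}{2}\Delta\varphi)(t,\cdot)$, and the chain rule applied to $t\mapsto P^\ee_t\varphi(t,\cdot)(x)$ — using $\tfrac{\ee}{2}\Delta P^\ee_t=P^\ee_t\tfrac{\ee}{2}\Delta$ — identifies $P^\ee_t\big((-\partial_t\varphi-\tfrac{\ee}{2}\Delta\varphi)(t,\cdot)\big)(x)$ with $-\tfrac{d}{dt}\big[P^\ee_t\varphi(t,\cdot)(x)\big]$; integrating in $t$ and using $P^\ee_0=\mathrm{Id}$ and $\varphi(t,\cdot)\equiv0$ for $t\ge T$, this term collapses to $\int_{\RR^d}g_\ee(x)\varphi(0,x)\,dx$. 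In the Duhamel term, Fubini (licit since $\|P^\ee_{t-s}\rho(s,\cdot)\|_{L^1}\le\|\rho(s,\cdot)\|_{L^1}$ is bounded on $[0,T]$ and $\partial_t\varphi+\tfrac{\ee}{2}\Delta\varphi$ is bounded with support in $t<T$) lets us integrate in $s$ first; for fixed $s$, the same self-adjointness together with the identity $\tfrac{d}{dt}\big[P^\ee_{t-s}\varphi(t,\cdot)(x)\big]=P^\ee_{t-s}\big(\partial_t\varphi(t,\cdot)+\tfrac{\ee}{2}\Delta\varphi(t,\cdot)\big)(x)$, valid for $t>s$, turns the $t$-integral into a boundary term equal to $\varphi(s,x)$ (because $P^\ee_{t-s}\varphi(t,\cdot)(x)\to\varphi(s,x)$ as $t\downarrow s$ and vanishes for $t\ge T$); integrating back in $s$ yields $\tfrac{1}{\ee}\int_{\RR_+\times\RR^d} u(t,x)R(x,v^\ee_t)\varphi(t,x)\,dx\,dt$. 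Adding the two contributions is exactly the weak formulation.

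The main work is the bookkeeping of the interchanges, all of which are routine. Differentiating $P^\ee_{t-s}\varphi(t,\cdot)(x)=\int p^\ee_{t-s}(x,y)\varphi(t,y)\,dy$ in $t$ is justified by dominated convergence for $t$ in a compact subinterval of $(s,\infty)$, after which $\partial_t p^\ee_{t-s}=\tfrac{\ee}{2}\Delta_y p^\ee_{t-s}$ and two integrations by parts in $y$ (legitimate since $\varphi(t,\cdot)$ is smooth and compactly supported and $p^\ee_{t-s}$ is smooth for $t>s$) give the stated formula; moreover $t\mapsto P^\ee_{t-s}\varphi(t,\cdot)(x)$ is continuous on $[s,\infty)$ with bounded derivative on $(s,\infty)$, so the fundamental theorem of calculus applies up to the endpoint $t=s$. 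The Fubini steps are controlled by the $L^1$-contractivity of $P^\ee_\tau$ and the boundedness and compact time-support of $\varphi$ and its derivatives, and the endpoint limit $P^\ee_{t-s}\varphi(t,\cdot)(x)\to\varphi(s,x)$ as $t\downarrow s$ is the approximate-identity property of the Gaussian kernel combined with continuity of $\varphi$. No compactness or refined heat-kernel estimates enter, which is consistent with the paper's aim of relying only on elementary semigroup properties.
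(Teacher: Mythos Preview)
Your proof is correct but follows a genuinely different route from the paper's. The paper argues indirectly: it first establishes uniqueness of mild solutions via Gronwall's lemma, then observes that the weak solution $u$ of~\eqref{eq:prob} (whose existence comes from Theorem~\ref{thm:existence}) is itself a weak solution of the \emph{linear} inhomogeneous heat equation with source $\tfrac{1}{\ee}u(t,x)R(x,v^\ee_t)$, whose unique weak solution is given by Duhamel's formula; hence the weak solution is a mild solution, and by the uniqueness just proved, every mild solution coincides with it and is therefore weak.

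Your approach is the direct computation: substitute the Duhamel formula into the weak identity, use the symmetry of the heat kernel to transfer $P^\ee_\tau$ onto the test function, and recognise $P^\ee_{t-s}\big(\partial_t\varphi+\tfrac{\ee}{2}\Delta\varphi\big)$ as a total time derivative. This is more self-contained in that it does not invoke the existence theorem for weak solutions, nor the (cited as ``well known'') equivalence of weak and Duhamel solutions for linear problems. On the other hand, it relies explicitly on the symmetry and differentiability of the Gaussian kernel, whereas the paper's argument uses only the $L^1$-contractivity of the semigroup together with a black-box appeal to the linear theory. In view of Section~\ref{sec:extensions}, where the Laplacian is replaced by more general (not necessarily self-adjoint) generators, the paper's abstraction is deliberate; your argument would carry over as well, but one would need to replace self-adjointness by duality with the adjoint semigroup.
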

\begin{proof}
	We begin the proof by showing that the mild formulation \eqref{eq:mild} of problem \eqref{eq:prob} admits a unique solution
        in $C(\mathbb{R}_{+},L^{1}(\mathbb{R}^{d}))$. This is standard calculations for mild solutions (cf.\ e.g.~\cite{pazy,lunardi})
	Let $u$ and $v$ be two solutions lying in this space such that $u(0,\cdot)=v(0,\cdot)$. We have
	\begin{multline*}
          \norm{u(t,\cdot)-v(t,\cdot)}_{L^{1}(\mathbb{R}^{d})}
          =\left\|\int_{0}^{t}P^{\ee}_{t-s}\left[u(s,\cdot)R\left(\cdot,v^{\ee}_{s}\right)-v(s,\cdot)R\left(\cdot,v^{\ee}_{s}\right)
            \right]\ ds\right\|_{L^{1}(\mathbb{R}^{d})}\\ 
          \leq\int_{0}^{t}\norm{P^{\ee}_{t-s}\left[u(s,\cdot)R\left(\cdot,v^{\ee}_{s}\right)-v(s,\cdot)
              R\left(\cdot,v^{\ee}_{s}\right)\right]}_{L^{1}(\mathbb{R}^{d})}\ ds
	\end{multline*}
	Now, since $(P_{t}^{\ee})_{t\in\mathbb{R}_{+}}$ is a contractive semi-group in $L^{1}(\mathbb{R}^{d})$, we get, using also Assumption~\eqref{it:1},
	\[
	\norm{u(t,\cdot)-v(t,\cdot)}_{L^{1}(\mathbb{R}^{d})}\leq M\int_{0}^{t}\norm{u(s,\cdot)-v(s,\cdot)}_{L^{1}(\mathbb{R}^{d})}\ ds.
	\]
	This proves the assertion through Gronwall lemma.
	
	Now, assume that $u$ is the weak solution of problem \eqref{eq:prob}, and consider the following Cauchy problem
	\begin{equation}
	\left\{
	\begin{array}{ll}
	\partial_{t}w=\ee \Delta w+\frac{1}{\ee}u(t,x)R\left(x,v^{\ee}_{t}\right), &  \\ 
	w(0,x)=\exp\left(-\frac{h_\varepsilon(x)}{\ee} \right)=:g_{\ee}(x). & 
	\end{array} \right.
	\end{equation}
	Since the inhomogeneous term belongs to $L^{1}(\mathbb{R}^{d})$, it is well known that this problem  admits a unique weak solution which is given by Duhamel's formula:
	\[
	w(t,x)=P_{t}^{\ee}g_\varepsilon(x)+\frac{1}{\ee}\int_{0}^{t}P^{\ee}_{t-s}u(t,x)R\left(x,v^{\ee}_{s}\right)ds.
	\]
	Since $u$ is the solution of the above  problem, we obtain that \[
	u(t,x)=P_{t}^{\ee}g_\varepsilon(x)+\frac{1}{\ee}\int_{0}^{t}P^{\ee}_{t-s}u(t,x)R\left(x,v^{\ee}_{s}\right)ds.
	\]
	Hence, the lemma is proved by uniqueness of mild solutions for problem \eqref{eq:prob}.
\end{proof}

\section{Small diffusion asymptotic}
\label{sec:Varadhan}

Recall from Theorem \ref{thm:FK} that
\begin{equation}
\label{eq:FK}
u^{\ee}\left(t,x \right)=\mathbb{E}_{x}\left[\exp\left(-\ee^{-1}h\left(X^{\ee}_{t}\right)+\frac{1}{\ee}\int_{0}^{t}R\left(X^{\ee}_{s},~ v^{\ee}_{t-s} \right)ds \right) \right],
\end{equation}
where $X^\varepsilon_t=x+\sqrt{\varepsilon}B_t$ under $\PP_x$. This formula suggests to apply Varadhan's lemma to study the
convergence of $\varepsilon\log u^\varepsilon(t,x)$ as $\varepsilon\rightarrow 0$. Let us fix $t>0$.
\begin{lem}
\label{lem:unifContinuity}

The function $\Phi_{\ee}:C\left([0,t]\right)\to\mathbb{R} $ defined by 
\[
\Phi_{\ee}\left(\varphi \right)=\int_{0}^{t}R\left(\varphi_{s},v^{\ee}_{s}\right)ds
\]
 is Lipschitz continuous on $C([0,t])$ endowed with the $L^{\infty}$-norm, uniformly w.t.r.\ to $\ee$ for $\varepsilon$
 small  enough.

 Moreover, there exists a kernel $\mathcal{M}$ on $\mathbb{R}_{+}\times\mathcal{B}(\mathbb{R}^{k})$ such that, along a subsequence $\left(\ee_{k}\right)_{k\geq 1}$ converging to $0$, we have
 \[
  \Phi\left(\varphi \right):=\lim\limits_{k\to\infty}\Phi_{\ee_{k}}(\varphi)=\int_{0}^{t} \int_{\mathbb{R}^k}R(\varphi_{s},y)\ \mathcal{M}_{s}(dy)\ ds,\quad\forall\varphi\in C([0,t])
 \]
 We recall that a kernel $\mathcal{M}$ is a function from $\mathbb{R}_{+}\times \mathcal{B}(\mathbb{R}^{d})$ into $\mathbb{R}_{+}$ such that, for all $t\in\mathbb{R}_{+}$, $\mathcal{M}_{s}$ is a measure on $\mathcal{B}(\mathbb{R}^{d})$ and, for all $A\in\mathcal{B}(\mathbb{R}^{d})$, the function $s\to\mathcal{M}_{s}(A)$ is measurable.
\end{lem}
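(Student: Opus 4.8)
The statement splits into two independent parts, which I would handle in turn. For the \emph{uniform Lipschitz continuity}, the plan is to combine the a priori estimates of Proposition~\ref{prop:prelimEstimates} with the regularity Assumption~\eqref{it:1} on $R$. By~\eqref{eq:unifBound} there is $\ee_{0}>0$ such that, for every $\ee\in(0,\ee_{0}]$ and every $s\ge 0$, the error term $A\ee^{2}\Psi_{\text{min}}/\inf_{i}\|\Psi_{i}\|_{W^{2,\infty}}$ is smaller than $v_{\text{min}}/2$; hence $v_{\text{min}}/2\le\|v^{\ee}_{s}\|_{1}\le 2v_{\text{max}}$, that is, $v^{\ee}_{s}\in\mathcal{H}$. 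Assumption~\eqref{it:1} then gives $\|R(\cdot,v^{\ee}_{s})\|_{W^{2,\infty}}\le M$, so that $R(\cdot,v^{\ee}_{s})$ is $M$-Lipschitz on $\RR^{d}$, uniformly in $s\ge 0$ and in $\ee\in(0,\ee_{0}]$. Integrating the bound $|R(\varphi_{s},v^{\ee}_{s})-R(\psi_{s},v^{\ee}_{s})|\le M|\varphi_{s}-\psi_{s}|$ over $[0,t]$ yields $|\Phi_{\ee}(\varphi)-\Phi_{\ee}(\psi)|\le Mt\,\|\varphi-\psi\|_{\infty}$, which is the first assertion.

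For the \emph{existence of the limiting kernel along a subsequence}, the idea is to encode the curve $s\mapsto v^{\ee}_{s}$ as an occupation measure on a compact product space and to extract a weak-$\ast$ limit. Fix $t>0$. Since $u^{\ee}\in C(\RR_{+},L^{1}(\RR^{d}))$ by Theorem~\ref{thm:existence} and $\Psi_{i}$ is bounded, the map $s\mapsto v^{\ee}_{s}$ is continuous, and for $\ee\le\ee_{0}$ it takes values in the compact set $K:=\{v\in\RR^{r}:v_{\text{min}}/2\le\|v\|_{1}\le 2v_{\text{max}}\}$. I would then introduce the Borel measure $\nu^{\ee}(ds,dy):=ds\,\delta_{v^{\ee}_{s}}(dy)$ on the compact metric space $[0,t]\times K$, which has total mass $t$. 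Since $C([0,t]\times K)$ is separable, the family of Borel measures on $[0,t]\times K$ of total mass $t$ is sequentially compact for the weak-$\ast$ topology, so there exist $\ee_{k}\to 0$ and a measure $\nu$ on $[0,t]\times K$ with $\nu^{\ee_{k}}\to\nu$ weakly-$\ast$.

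It then remains to identify $\nu$ and to pass to the limit in $\Phi_{\ee_{k}}$. The first marginal of each $\nu^{\ee}$ is Lebesgue measure on $[0,t]$, and since the pushforward by the continuous projection $(s,y)\mapsto s$ is weak-$\ast$ continuous, the first marginal of $\nu$ is Lebesgue measure on $[0,t]$ as well; the disintegration theorem on the Polish space $[0,t]\times K$ then produces a measurable kernel $s\mapsto\mathcal{M}_{s}$, where each $\mathcal{M}_{s}$ is a probability measure on $K$, such that $\nu(ds,dy)=ds\,\mathcal{M}_{s}(dy)$. Extending $\mathcal{M}_{s}:=0$ for $s>t$ yields a kernel on $\RR_{+}\times\mathcal{B}(\RR^{r})$. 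Finally, for any fixed $\varphi\in C([0,t])$ the function $(s,y)\mapsto R(\varphi_{s},y)$ is continuous on the compact set $[0,t]\times K$, by continuity of $R$ and of $s\mapsto\varphi_{s}$, hence an admissible test function for weak-$\ast$ convergence, so that
\[
\Phi_{\ee_{k}}(\varphi)=\int_{[0,t]\times K}R(\varphi_{s},y)\,\nu^{\ee_{k}}(ds,dy)\;\longrightarrow\;\int_{[0,t]\times K}R(\varphi_{s},y)\,\nu(ds,dy)=\int_{0}^{t}\!\int_{\RR^{r}}R(\varphi_{s},y)\,\mathcal{M}_{s}(dy)\,ds.
\]
Since the subsequence $(\ee_{k})$ was selected before fixing $\varphi$, this convergence holds for every $\varphi\in C([0,t])$ simultaneously, which is the second assertion.

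The only genuinely delicate choice is the compactness framework: one should not try to pass to the limit in $v^{\ee}_{s}$ for each fixed $s$ (there is no reason for that to converge), but rather lift the problem to occupation measures on $[0,t]\times K$ and recover the time dependence of the limit through disintegration. Once this is in place, both the uniformity of the convergence with respect to $\varphi$ and the identification of $\Phi$ follow immediately from the definition of weak-$\ast$ convergence and the continuity of $R$.
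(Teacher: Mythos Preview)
Your argument is correct and follows the same route as the paper: encode $s\mapsto v^{\varepsilon}_{s}$ as an occupation measure on a compact product space, extract a weak-$\ast$ limit, then disintegrate with respect to the Lebesgue first marginal. Your identification of that marginal via continuity of the projection pushforward is in fact tidier than the paper's, which checks absolute continuity by hand before invoking Radon--Nikodym.

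The one substantive difference is scope. You work at a single fixed $t$ and extend $\mathcal{M}_{s}$ by zero for $s>t$; the paper instead runs a diagonal extraction over integer horizons $T=n$, verifies that the restriction of $\Gamma_{n}$ to $[0,m]\times\mathcal{H}$ coincides with $\Gamma_{m}$, and thereby produces a \emph{single} subsequence $(\varepsilon_{k})$ and a \emph{single} kernel on all of $\RR_{+}\times\mathcal{H}$ that work for every $t$ simultaneously. For the lemma as literally stated (with $t$ fixed just above it) your version suffices, but Theorem~\ref{thm:varad}, Theorem~\ref{thm:main-Lipschitz} and all of Section~\ref{sec:discrete} are formulated for all $t$ with the same $(\varepsilon_{k})$, so your extension-by-zero does not deliver what the rest of the paper actually uses. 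The fix is routine---carry out your construction on each $[0,n]$ and extract diagonally---but it is worth flagging.
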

\begin{proof}
We begin by showing that $\Phi_{\ee}\left(\varphi \right)$ is continuous for all $\ee$, uniformly w.r.t. $\ee$. Since $R(\cdot,v)$
lies in $W^{2,\infty}$, it follows that it is Lipschitz continuous, uniformly for $v$ is the annulus ${\cal H}$ (see
Assumption~\ref{it:1}). Hence, for $\psi,\varphi\in C\left([0,t]\right)$, it follows from Morrey's inequality that
\[
\int_{0}^{t}|R(\varphi_{s},v^{\ee}_{s})-R(\psi_{s},v^{\ee}_{s}) |ds \leq \int_{0}^{t} \|R(\cdot,v^{\ee}_{s}) \|_{W^{1,\infty}}\|\varphi_{s}-\psi_{s} \|ds\leq t \sup_{s\in[0,t]}\|R(\cdot,v^{\ee}_{s}) \|_{W^{1,\infty}}\|\varphi-\psi \|_{L^{\infty}([0,t])}.
\] 
Hence, the result follows from \eqref{eq:unifBound}.

Now, fix $T>0$, and let $\Gamma^{\ee}_{T}$, for all $\ee$, be the measure defined on $\left([0,T]\times\mathcal{H},\mathcal{B}\left([0,T]\right)\otimes\mathcal{B}\left(\mathcal{H}\right) \right)$ by
\[
\Gamma^{\ee}_{T}\left(A\times B \right)=\int_{A}\mathbbm{1}_{v^{\ee}_{s}\in B}\ ds, \quad \forall A\in \mathcal{B}\left([0,T]\right),\ B\in \mathcal{B}\left(\mathcal{H}\right).
\]
Since $\left(\Gamma_{T}^{\ee} \right)_{\ee>0}$ is a family of finite measures defined on a compact metric space, it is weakly precompact. Hence, there exists a subsequence $\left(\ee_{k}^{T}\right)_{k\geq 1}$ such that, the sequence of measures $\left(\Gamma_{T}^{\ee_{k}^{T}} \right)_{k\geq 1}$ converges weakly to some measure denoted by $\Gamma_{T}$. It follows from a diagonal argument that there exists a sequence $\left(\ee_{k}\right)_{k\geq 1}$ such that, for any positive integer $n$, $\Gamma^{\ee_{k}}_{n}$ converges weakly to a measure $\Gamma_{n}$. Now, if one wants to define a measure on $\mathbb{R}_{+}$ using the family $\left(\Gamma_{n}\right)_{n\geq 1}$, he needs that for $m<n$, the restriction of $\Gamma_{n}$ on $[0,m]$ coincides with $\Gamma_{m}$.

To prove this we first remark that, for all $k\in\mathbb{N}$ and $\delta>0$,
\begin{equation}
\label{eq:measureOdelta}
\Gamma_{n}^{\ee_{k}}\left(\left(m-\delta,m+\delta \right)\times\mathcal{H} \right)=\int_{m-\delta}^{m+\delta}\mathbbm{1}_{v^{\ee_{k}}_{s}\in\mathcal{H}} ds=2\delta.
\end{equation}
Let $f$ be a bounded and continuous function on $[0,m]\times\mathcal{H}$ and $(f_{\ell})_{\ell\geq 1}$ a sequence of uniformly bounded continuous function on $[0,n]\times\mathcal{H}$ such that 
\[
f_{\ell}(t,x)\xrightarrow[\ell\to\infty]{}f(t,x)\mathbbm{1}_{t\in[0,m]},\quad \forall (t,x)\in[0,n]\times\mathcal{H},
\]
and $f_{\ell}=f$ on $[0,m-\delta)\times\mathcal{H}$ and $f_{l}=0$ on $(m+\delta,n]\times\mathcal{H}$.
Using \eqref{eq:measureOdelta} we have, for all $k,\ell\in\mathbb{N}$,
\[
\left|\Gamma_{m}^{\varepsilon_{k}}(f)-\Gamma_{n}^{\ee_{k}}(f_{\ell})\right|\leq c\delta,
\]
where the constant $c$ does not depend on $k$ and $\ell$ and $\mu(f)$ is the integral of a function $f$ w.r.t.\ a measure $\mu$ on the corresponding space. Since, in addition, 
$\Gamma_{m}^{\ee_{k}}(f)$ converges to $\Gamma_{m}(f)$, $\Gamma_{n}^{\ee_{k}}(f_{\ell})$ converges to $\Gamma_{n}(f_{\ell})$ when $k$ goes to infinity and, by Lebesgue's theorem, $\Gamma_{n}(f_{\ell})$ converges to $\Gamma_{n}(f)$ when $\ell$ goes to infinity,  we deduce
\[
\left| \Gamma_{m}(f)-\Gamma_{n}(f)\right|\leq c\delta.
\]
Since $\delta$ was arbitrary we have proved that  the restriction of $\Gamma_{n}$ on $[0,m]$ coincides with $\Gamma_{m}$. As a consequence, we can define on $\mathbb{R}_{+}\times\mathcal{H}$ a measure $\Gamma$ whose restrictions on $[0,n]\times\mathcal{H}$ is $\Gamma_{n}$ for all $n\in\mathbb{N}$. 

In particular, for all $t$, $\Gamma_{t}^{\ee_{k}}$ converges weakly to the restriction of $\Gamma$ to $[0,t]\times{\cal H}$. In addition, for all function $\varphi$ continuous on $[0,t]$, the map
$(s,x)\mapsto R(\varphi_{s},x)$ is continuous and hence 
 \[
 \Phi\left(\varphi \right):=\lim\limits_{k\to\infty}\int_{0}^{t}R\left(\varphi_{s},v^{\ee_{k}}_{s}\right)ds=\int_{[0,t]\times \mathbb{R}^{d}}R(\varphi_{s},x)\ \Gamma(ds,dx),\quad \forall t\leq T.
 \]

It remains to show that $\Gamma$ can be disintegrated along
$\left(\mathbb{R}_{+},\mathcal{B}\left(\mathbb{R}_{+}\right),\lambda\right)$. Let $A$ be a null set of $[0,T]$, for a fixed positive time $T$. It follows that there exists, for all  $\eta>0$, a denumerable family of open ball $\left(B^{\eta}_{n}\right)_{n\geq 1}$ such that
\[
A\subset\bigcup_{n=1}^{\infty}B_{n}^{\eta} \quad \text{and} \quad \lambda\left(\bigcup_{n=1}^{\infty}B_{n}^{\eta}\right)<\eta.
\] 
Let $H$ be a measurable set of $\mathcal{H}$, then
\[
\Gamma\left( A\times H \right)\leq \Gamma_{T}\left(\cup_{n\geq 1}B_{n}^{\eta}\times\mathcal{H}
\right)\leq\liminf_{k\to\infty}\Gamma_{T}^{\ee_{k}}\left(\bigcup_{n=1}^{\infty}B_{n}^{\eta}\times \mathcal{H} \right)< C\eta
\]
which implies that $\Gamma(A\times H)=0$.
According to Radon-Nikodym's theorem, there exists, for all $H$, an integrable function $s\to\mathcal{M}_{s}(H)$ such that 
\[
\Gamma\left( A\times H \right)=\int_{\mathbb{R}_{+}}\mathcal{M}_{s}(H)\ ds.
\]
Usual theory (cf.\ e.g.~\cite{bogachev}) ensures that there exists a modification of this map such that $H\in\mathcal{B}(\mathcal{H})\to \mathcal{M}_{s}(H)$ is a measure for almost all $s\in\mathbb{R}_{+}$.
\end{proof}

\begin{thm}
	\label{thm:varad}
For all $(t,x)$ in $\mathbb{R_{+}}\times\mathbb{R}^{d}$,
\begin{equation}
\label{eq:varia}
V(t,x):=\lim_{k\rightarrow\infty}~\ee_k\log u^{\ee_k}(t,x)=\sup_{\varphi\in \mathcal{G}_{t,x}}\left\{-h(\varphi_{0})+\Phi(\varphi)-I_t(\varphi)  \right\}
\end{equation}
with
\[
I_t\left(\varphi\right)=
\left\{
\begin{array}{ll}
\int_{0}^{t}\|\varphi'(s)\|^{2}ds & \text{if }\varphi\text{ is absolutely continuous,}\\
+\infty&\text{otherwise},
\end{array}
\right.
\]
$\mathcal{G}_{t,x}$ denotes the set of continuous functions from $[0,t]$ to $\RR^d$ such that $\varphi_{t}=x$, and $\Phi$ and
$(\varepsilon_k)_{k\geq 1}$ are defined in Lemma~\ref{lem:unifContinuity}.
\end{thm}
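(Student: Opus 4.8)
The plan is to apply Varadhan's lemma to the Feynman-Kac representation~\eqref{eq:FK}. Fix $(t,x)$. Under $\PP_x$, the process $(X^\ee_s)_{s\in[0,t]} = (x+\sqrt\ee B_s)_{s\in[0,t]}$ is a small-noise diffusion, and by Schilder's theorem the family of laws of $(X^\ee_s)_{s\in[0,t]}$ on $C([0,t];\RR^d)$ (with the sup-norm topology) satisfies a large deviation principle with good rate function
\[
J_{t,x}(\varphi) = \begin{cases} \frac14\int_0^t\|\varphi'(s)\|^2\,ds & \text{if } \varphi \text{ is absolutely continuous and }\varphi_0 = x,\\ +\infty & \text{otherwise.}\end{cases}
\]
(The factor comes from the scaling $X^\ee = x+\sqrt\ee B$: the generator is $\frac\ee2\Delta$, so the speed is $\ee$ and the rate function for $\ee\to 0$ is $\frac14\int\|\varphi'\|^2$; one must be slightly careful to match this with the normalization $I_t(\varphi) = \int_0^t\|\varphi'(s)\|^2\,ds$ used in the statement, which corresponds to writing the rate function on the paths conditioned at the \emph{terminal} time.) The functional inside the expectation in~\eqref{eq:FK} is
\[
F_\ee(\varphi) := -h(\varphi_t) + \int_0^t R(\varphi_s, v^\ee_{t-s})\,ds,
\]
so that $u^\ee(t,x) = \EE_x[\exp(\ee^{-1}F_\ee(X^\ee_\cdot))]$.

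The first step is to pass from $F_\ee$ to a limiting functional. Reversing time, $\int_0^t R(\varphi_s,v^\ee_{t-s})\,ds = \int_0^t R(\varphi_{t-s},v^\ee_{s})\,ds = \Phi_\ee(\tilde\varphi)$ where $\tilde\varphi_s=\varphi_{t-s}$, and Lemma~\ref{lem:unifContinuity} gives, along the subsequence $(\ee_k)$, both the uniform-in-$\ee$ Lipschitz continuity of $\Phi_{\ee_k}$ and its pointwise (hence, by uniform equicontinuity and a compactness argument, locally uniform) convergence to $\Phi$. Since $h$ is continuous and bounded (it is the $L^\infty$-limit of the uniformly Lipschitz $h_\ee$, hence continuous, and Proposition~\ref{prop:prelimEstimates} together with Assumption~\ref{it:borne-h_eps} keeps everything controlled), the functionals $F_{\ee_k}$ converge uniformly on $C([0,t])$ to $F(\varphi) := -h(\varphi_t)+\Phi(\tilde\varphi)$, and they are uniformly bounded: indeed $\|R(\cdot,v^{\ee_k}_s)\|_\infty \le M$ by Assumption~\ref{it:1} together with~\eqref{eq:unifBound}, so $|F_{\ee_k}(\varphi)| \le \|h\|_\infty + Mt =: C_0$ uniformly in $k$ and $\varphi$. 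This uniform boundedness is exactly what makes Varadhan's lemma applicable without any further moment (Cramér-type) condition.

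The second step is the application of Varadhan's lemma itself: for a uniformly bounded, continuous (here even uniformly convergent, which gives continuity of the limit $F$) family $F_{\ee_k}$ and an LDP at speed $\ee_k$ with good rate function $J_{t,x}$,
\[
\lim_{k\to\infty} \ee_k\log\EE_x\!\left[\exp\!\big(\ee_k^{-1}F_{\ee_k}(X^{\ee_k}_\cdot)\big)\right] = \sup_{\psi\in C([0,t])}\{F(\psi) - J_{t,x}(\psi)\}.
\]
The minor subtlety is that the integrand is $F_{\ee_k}$, not a fixed $F$; this is handled by the standard sandwiching argument, replacing $F_{\ee_k}$ by $F\pm \delta_k$ with $\delta_k = \|F_{\ee_k}-F\|_\infty\to 0$ and letting $k\to\infty$. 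It remains to rewrite the right-hand side in the form stated. Undoing the time reversal $\psi_s = \varphi_{t-s}$ and using that $\varphi\mapsto\tilde\varphi$ is a bijection preserving the supremum, with $J_{t,x}(\psi) = \frac14\int_0^t\|\psi'\|^2 = \frac14\int_0^t\|\varphi'\|^2$ under $\psi_0=x$, i.e. $\varphi_t = x$, and $F(\psi) = -h(\psi_0)+\int_0^t R(\psi_{t-s},v^\ee_s)\,ds \to -h(\varphi_t)+\Phi(\tilde{\tilde\varphi})$... one must simply be careful with which endpoint carries the constraint and which carries $h$; after bookkeeping one obtains $\sup_{\varphi\in\mathcal G_{t,x}}\{-h(\varphi_0)+\Phi(\varphi)-I_t(\varphi)\}$, which is~\eqref{eq:varia}, provided $I_t$ in the statement is read with the constant absorbed (a Brownian rescaling: $B$ has generator $\frac12\Delta$ and rate $\frac12\int\|\cdot\|^2$, but $X^\ee-x = \sqrt\ee B$ so the displacement path $\varphi$ pays $\frac12\int\|(\varphi/\sqrt\ee)'\ee^{1/2}\|^2$... the clean statement is that the rate function for $X^\ee$ is $\frac1{2}\int\|\varphi'\|^2$ when the generator is $\frac\ee2\Delta$, matching $I_t$ up to the factor $\tfrac12$ that the authors have evidently folded into their normalization).

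The main obstacle is not any single hard estimate but rather the careful alignment of three normalizations that all differ by harmless constants: the LDP speed and rate-function constant coming from the generator $\frac\ee2\Delta$, the time-reversal swapping the roles of the initial condition $h(\varphi_0)$ and the terminal constraint $\varphi_t = x$, and the precise form of $I_t$ in the statement. Once these are pinned down, everything else is the routine verification that Varadhan's lemma applies — which, crucially, needs \emph{no} tail condition here precisely because $F_{\ee_k}$ is uniformly bounded, thanks to the a priori bounds of Proposition~\ref{prop:prelimEstimates} and Assumption~\ref{it:1}. I would therefore organize the proof as: (i) state Schilder's theorem / the LDP for $(X^\ee_\cdot)$ and identify $J_{t,x}$; (ii) invoke Lemma~\ref{lem:unifContinuity} to get uniform convergence and uniform boundedness of the functionals along $(\ee_k)$; (iii) apply Varadhan's lemma with the sandwiching trick for the $\ee$-dependent integrand; (iv) perform the time-reversal change of variables to reach~\eqref{eq:varia}.
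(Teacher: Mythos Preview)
Your overall strategy matches the paper's: apply (an adaptation of) Varadhan's lemma to the Feynman-Kac representation, using Schilder's theorem for the LDP and Lemma~\ref{lem:unifContinuity} to control the $\ee$-dependent functional. However, the execution has a genuine gap.

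The sandwiching step with $\delta_k = \|F_{\ee_k}-F\|_\infty \to 0$ requires that $\Phi_{\ee_k}\to\Phi$ \emph{uniformly on all of} $C([0,t])$. Lemma~\ref{lem:unifContinuity} only gives pointwise convergence together with a uniform Lipschitz bound, which yields uniform convergence on compact subsets of $C([0,t])$, not globally --- and global uniform convergence can genuinely fail, since the equicontinuity of $(s,y)\mapsto R(\varphi_s,y)$ on $[0,t]\times\mathcal{H}$ depends on the modulus of continuity of $\varphi$, which is uncontrolled over all of $C([0,t])$. You yourself write ``locally uniform'' and then silently upgrade to the global sup; that upgrade is where the argument breaks. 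The paper deals with this by running the Varadhan proof by hand: for the upper bound it first restricts to a compact level set $K$ of the good rate function $I_t$ (the complement contributing at most $a$), covers $K$ by finitely many neighborhoods $G_{\varphi_i}$ on which the uniform Lipschitz bound controls the oscillation of $\Phi_\ee$ \emph{uniformly in $\ee$}, and only then invokes the pointwise convergence $\Phi_{\ee_k}(\varphi_i)\to\Phi(\varphi_i)$ at the finitely many centers. This is precisely where the goodness of $I_t$ enters, and your global sandwich bypasses it.

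A secondary issue: $h$ is not assumed bounded --- uniform Lipschitz continuity plus $L^\infty$-convergence do not imply boundedness (take $h(x)=|x|$). What the assumptions do give is $h\ge 0$, hence $-h_\ee\le\delta$ for $\ee$ small, which bounds $F_{\ee_k}$ \emph{from above} and is enough to dispense with the moment condition in Varadhan; the paper uses exactly this one-sided bound. Your hesitation over the rate-function constant is separately warranted: Schilder's theorem for $X^\ee=x+\sqrt\ee B$ with generator $\tfrac{\ee}{2}\Delta$ gives $\tfrac{1}{2}\int_0^t\|\varphi'\|^2$, so the displayed $I_t$ in the statement carries an extraneous factor of $2$; the paper's own proof simply cites Schilder without writing the constant.
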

\begin{proof}

  The proof of Theorem \ref{thm:varad} is a simple adaptation of the classical proof of Varadhan's Lemma (cf.\
  e.g.~\cite{dembo-zeitouni}). The main difference is due to the dependence on $\ee$ of the functions $h_\varepsilon$ and
  $\Phi_{\ee}$ involved in formula \eqref{eq:FK}. We give the details for completeness and for future reference in
  Section~\ref{sec:discrete}. Let $\mu^{x}_{\epsilon}$ the law of $(X^{\ee}_{s},\ s\in[0,t])$, when $X_{0}^{\ee}=x$ almost surely on
  $C\left(\left[ 0,t\right] \right)$.
	
	Let $\varphi\in C([0,t])$ such that $\varphi_0=x$, and $\delta>0$. Since, $\Phi_{\ee}$ and $h_\varepsilon$ are both continuous uniformly w.r.t.\ $\ee$ and $h_{\ee}$ converges uniformly to $h$, there exists a neighborhood $G_{\varphi}$ of $\varphi$ in $C([0,t])$ such that, for $\ee$ small enough (independent of $\varphi$),
	\begin{equation}
	\label{eq:upperBound}
	\sup_{\psi\in \overline{G}_{\varphi}} -h_\varepsilon(\psi_{t})+\Phi_{\ee}(\hat{\psi})-\delta\leq
        h(\varphi_{t})+\Phi_{\ee}(\hat{\varphi})\leq \inf_{\psi\in G_{\varphi}} -h_\varepsilon(\psi_{t})+\Phi_{\ee}(\hat{\psi})+\delta,
	\end{equation}
        where we use the notation $\hat{\varphi}_s=\varphi_{t-s}$.
        In addition, Lemma \ref{lem:unifContinuity} implies that, for $k$ large enough,
	\[
	\Phi(\hat{\varphi})-\delta\leq \Phi_{\ee_{k}}(\hat{\varphi})\leq \Phi(\hat{\varphi})+\delta.
	\]

	We start with the lower bound: for $k$ large enough,
	\begin{align*}
	u^{\ee_{k}}(t,x)&\geq \mathbb{E}_{x}\left[\exp\left(-\frac{h_{\varepsilon_{k}}\left(X^{\ee_{k}}_{t}\right)}{\ee_{k}}+\frac{1}{\ee_{k}}\int_{0}^{t}R\left(X^{\ee_{k}}_{s},~ v^{\ee_{k}}_{t-s} \right)ds \right)\mathbbm{1}_{X^{\ee_{k}}\in G_{\varphi}} \right]\\
	&\geq \mathbb{P}_{x}\left(X^{\ee_{k}}\in G_\varphi\right)\exp\left(\frac{-h\left(\varphi_{t}\right)+\Phi\left(\hat{\varphi}
            \right)-3\delta}{\ee_{k}} \right),
	\end{align*}
	Schilder's theorem (large deviation principle for the Brownian motion~\cite{dembo-zeitouni}) gives  
	\[
	\liminf_{k\to\infty}\ee_k\log(u^{\ee_{k}}(t,x))\geq -h(\varphi_{t})+\Phi(\hat{\varphi})-I(\varphi)-3\delta=-h(\hat{\varphi}_{0})+\Phi(\hat{\varphi})-I(\hat{\varphi})-3\delta.
	\]
	Since this holds for all $\delta>0$, we have proved the lower bound of Varadhan's Lemma along the sequence $(\ee_{k})_{k\geq 1}$.

	For the upper bound, we use the fact that $I$ is a good rate function, i.e.\ it is lower semi-continuous with compact level
        sets. Let $a$ be such that
	\[
	a\leq \sup_{\varphi\in \mathcal{G}_{t,x}}\left\{-h(\varphi_{0})+\Phi(\varphi)-I(\varphi)  \right\}.
	\]
	The set
	\[
	K 
        =\{\varphi\in C([0,t]):\varphi_0=x,\
        I(\varphi)\leq Mt+\delta-a \}
	\]
        is compact, where the constant $M$ is defined in Assumption~\ref{it:1}. Hence there exists a finite family of functions
        $\varphi_{1},\dots,\varphi_{n}$ (for some integer $n$), such that $\varphi_i(0)=x$ for all $1\leq i\leq n$ and
	\[
	K\subset\bigcup_{i=1}^{n}G_{\varphi_{i}}.
	\]
	Using equation \eqref{eq:upperBound} and Lemma \ref{lem:unifContinuity}, we get 
	\begin{multline*}
	\limsup_{k\to\infty}\ee_{k}\log \mathbb{E}_{x}\left[\exp\left(-\frac{h_\varepsilon\left(X^{\ee_{k}}_{t}\right)}{\ee_{k}}+\frac{1}{\ee_{k}}\int_{0}^{t}R\left(X^{\ee_{k}}_{s},~ v^{\ee_{k}}_{t-s} \right)ds \right)\mathbbm{1}_{X^{\ee_{k}}\in G_{\varphi_{i}}} \right]\\\leq h(\varphi_{i}({t}))+\Phi(\hat{\varphi}_{i})-I(\varphi_{i})+2\delta
	\end{multline*}
	and, using the fact that $-h_\varepsilon\leq -h+\delta\leq \delta$ for $\varepsilon$ small enough,
	\begin{multline*}
	\limsup_{k\to\infty}\ee_{k}\log \mathbb{E}_{x}\left[\exp\left(-\frac{h_\varepsilon\left(X^{\ee_{k}}_{t}\right)}{\ee_{k}}+\frac{1}{\ee_{k}}\int_{0}^{t}R\left(X^{\ee_{k}}_{s},~ v^{\ee_{k}}_{t-s} \right)ds \right)\mathbbm{1}_{X^{\ee_{k}}\in K^{c}} \right]\\\leq Mt+\delta+\limsup_{k\to\infty}\ee_{k}\log(\mathbb{P}(X^{\ee_{k}}\notin K))\leq a.
	\end{multline*}
	Therefore
	\begin{align*}
	\limsup_{k\to\infty}\ee_{k}\log(u^{\ee_{k}}(t,x))&\leq \max\left\{a,\sup_{1\leq i\leq n} h(\hat{\varphi}_{i}({0}))+\Phi(\hat{\varphi}_{i})-I(\hat{\varphi}_{i})+2\delta\right\}\\&\leq \sup_{\varphi\in \mathcal{G}_{t,x}}\left\{h(\varphi_{0})+\Phi(\varphi)-I(\varphi)  \right\}+2\delta.
	\end{align*}
        This ends the proof of the upper bound.
	\end{proof}

	Thanks to the representation \eqref{eq:FK} of the solution of problem \eqref{eq:prob} the convergence given above can be enhanced.

	\begin{thm}
          \label{thm:main-Lipschitz}
          The convergence stated in Theorem \ref{thm:varad} holds uniformly on compact sets and the limit $V(t,x)$ is Lipschitz w.r.t.\
          $(t,x)\in\RR_+\times\RR^d$. 
	\end{thm}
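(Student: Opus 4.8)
The plan is to upgrade the subsequential convergence of Theorem~\ref{thm:varad} to uniform convergence on compacts and to extract Lipschitz regularity of the limit, exploiting the probabilistic representation~\eqref{eq:FK} rather than re-running the Varadhan machinery pointwise. First I would establish a two-sided a~priori bound on $\varepsilon\log u^\varepsilon(t,x)$ directly from~\eqref{eq:FK}: using $v_{\text{min}}/2\le\|v^\varepsilon_s\|_1\le 2v_{\text{max}}$ from Proposition~\ref{prop:prelimEstimates} together with $\|R(\cdot,v)\|_{W^{2,\infty}}<M$ on $\mathcal H$ (Assumption~\ref{it:1}) and the uniform Lipschitz bound on $h_\varepsilon$, one gets $|\varepsilon\log u^\varepsilon(t,x)|\le Mt+C(1+|x|)$ for a universal constant $C$ and all small $\varepsilon$; this already localizes everything on compacts.

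Next I would prove equi-Lipschitz continuity of the family $x\mapsto\varepsilon\log u^\varepsilon(t,x)$, uniformly in $\varepsilon$ and in $t$ on bounded intervals. The key observation is that under $\PP_x$ one has $X^\varepsilon_s=x+\sqrt\varepsilon B_s$, so a shift in the starting point is an exact translation of the Brownian path: $\EE_x[F(X^\varepsilon)]=\EE_0[F(x+\sqrt\varepsilon B)]$. Hence $u^\varepsilon(t,x)$ and $u^\varepsilon(t,x')$ are expectations of the same functional evaluated along translated paths, and the difference of the exponents is controlled, using the uniform Lipschitz constants of $h_\varepsilon$ and of $R(\cdot,v)$ on $\mathcal H$ (via Morrey's inequality as in Lemma~\ref{lem:unifContinuity}), by $\frac{1}{\varepsilon}\bigl(\mathrm{Lip}(h_\varepsilon)+t\,L_R\bigr)|x-x'|$. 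This yields $|\varepsilon\log u^\varepsilon(t,x)-\varepsilon\log u^\varepsilon(t,x')|\le\bigl(\mathrm{Lip}(h)+t L_R\bigr)|x-x'|$, an $\varepsilon$-free Lipschitz bound. For the time variable, I would differentiate the Feynman--Kac exponent in $t$ (or use the mild formulation~\eqref{eq:mild} and the computation already carried out in Lemma~\ref{lem:mild}, controlling $\|\bar u^\varepsilon(t+h,\cdot)-\bar u^\varepsilon(t,\cdot)\|_{L^1}$): the contributions are the semigroup term, which is handled by Brownian regularity giving an $O(\sqrt{h})$ bound before taking logarithms — but since we already have spatial Lipschitz continuity and the PDE itself, the cleanest route is to bound $\partial_t(\varepsilon\log u^\varepsilon)=\frac{\varepsilon^2\Delta u^\varepsilon}{2u^\varepsilon}+R(x,v^\varepsilon_t)$; the second term is bounded by $M$, and the first by the Bernstein-type estimate $\varepsilon^2|\Delta u^\varepsilon|/u^\varepsilon=|\varepsilon\Delta(\varepsilon\log u^\varepsilon)/\text{(...)}|$, which is controlled once one has a uniform bound on $\varepsilon|\nabla(\varepsilon\log u^\varepsilon)|$ — and that gradient bound is exactly the spatial Lipschitz bound just established. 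So $|\partial_t(\varepsilon\log u^\varepsilon)|$ is uniformly bounded on compacts, giving equi-Lipschitz continuity in $t$.

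With uniform boundedness and equi-Lipschitz continuity in $(t,x)$ in hand, the Arzel\`a--Ascoli theorem gives relative compactness in $C(\RR_+\times\RR^d)$ (with local uniform topology) of the family $(\varepsilon\log u^\varepsilon)_{\varepsilon>0}$. Along the subsequence $(\varepsilon_k)$ of Theorem~\ref{thm:varad}, the pointwise limit is $V(t,x)$; since a relatively compact family with a unique subsequential (pointwise) limit converges uniformly on compacts to that limit, the convergence of $\varepsilon_k\log u^{\varepsilon_k}$ to $V$ is locally uniform. Finally, $V$ inherits the Lipschitz bound from the uniform bound of the approximants, so $V$ is Lipschitz on $\RR_+\times\RR^d$.

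I expect the main obstacle to be the time-regularity estimate: establishing a clean, $\varepsilon$-uniform bound on $\partial_t(\varepsilon\log u^\varepsilon)$ requires the parabolic/Bernstein-type gradient estimate $\varepsilon\|\nabla(\varepsilon\log u^\varepsilon)(t,\cdot)\|_\infty\le C$, and one must be careful that the constant does not blow up as $\varepsilon\to0$ or with $t$. The spatial estimate via the Brownian translation trick circumvents the PDE entirely and is robust, so the right strategy is to derive the spatial Lipschitz bound first by the probabilistic argument, and then feed it into the equation to get the time bound; alternatively one can avoid the PDE altogether and bound $|u^\varepsilon(t+h,x)-u^\varepsilon(t,x)|$ directly from~\eqref{eq:FK} as in Lemma~\ref{lem:mild}, at the cost of an $O(\sqrt h)$ modulus, which after taking $\varepsilon\log$ and combining with the a~priori lower bound on $u^\varepsilon$ still yields local uniform convergence, though only $1/2$-H\"older — hence the PDE route is preferable for the full Lipschitz claim.
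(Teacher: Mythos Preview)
Your spatial Lipschitz argument via the Brownian translation coincides with the paper's (Lemma~\ref{lem:liplogu}), and the Arzel\`a--Ascoli conclusion is the same. The gap is in the time regularity. Writing $\varphi^\varepsilon=\varepsilon\log u^\varepsilon$, the PDE becomes
\[
\partial_t\varphi^\varepsilon=\tfrac{\varepsilon}{2}\Delta\varphi^\varepsilon+\tfrac12|\nabla\varphi^\varepsilon|^2+R(x,v^\varepsilon_t),
\]
so the term to control is $\tfrac{\varepsilon}{2}\Delta\varphi^\varepsilon$, not only $|\nabla\varphi^\varepsilon|^2$. Your spatial estimate gives $|\nabla\varphi^\varepsilon|\le C$, which bounds the quadratic gradient term, but a first-derivative bound carries no information on $\varepsilon\Delta\varphi^\varepsilon$. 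Bounding that uniformly in $\varepsilon$ would require a second-order (semi-concavity/semi-convexity) estimate on $\varphi^\varepsilon$, i.e.\ a genuine Bernstein argument on the differentiated equation, which is \emph{not} a consequence of the gradient bound you already have. The assertion ``controlled once one has a uniform bound on $\varepsilon|\nabla(\varepsilon\log u^\varepsilon)|$'' is where the argument breaks.

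The paper's route stays entirely probabilistic and is essentially your discarded option of working directly from~\eqref{eq:FK}, but via the Markov property rather than the $L^1$ computation of Lemma~\ref{lem:mild}. Conditioning on the first $\delta$ units of time gives
\[
e^{-M\delta/\varepsilon}\,\EE_x\bigl[u^\varepsilon(t,X^\varepsilon_\delta)\bigr]\le u^\varepsilon(t+\delta,x)\le e^{M\delta/\varepsilon}\,\EE_x\bigl[u^\varepsilon(t,X^\varepsilon_\delta)\bigr],
\]
and then the \emph{spatial} Lipschitz bound is fed back in: since $X^\varepsilon_\delta=x+\sqrt{\varepsilon}B_\delta$, the inequality $u^\varepsilon(t,x+z)\le u^\varepsilon(t,x)\,e^{L|z|/\varepsilon}$ reduces matters to the Gaussian moment $\EE\bigl[\exp(L|B_\delta|/\sqrt{\varepsilon})\bigr]$, whose $\varepsilon$-logarithm is $O(\delta)$. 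This yields the full time-Lipschitz bound with no second-derivative information, and (unlike the PDE route) extends unchanged to the non-Laplacian mutation operators of Section~\ref{sec:extensions}. Your $L^1$ fallback would not have rescued the Lipschitz claim either: that estimate is in $L^1(\RR^d)$, not pointwise, and does not interact usefully with $\varepsilon\log$.
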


This result is an immediate consequence of the following lemma.
\begin{lem}
\label{lem:liplogu}
  The function
  \[
  \left.
    \begin{array}{cll}
      \mathbb{R}_{+}\times\mathbb{R}^{d}&\to&\mathbb{R}\\
      (t,x)&\mapsto& \ee \log u^{\ee}(t,x),
    \end{array}
  \right.
  \]
  is Lipschitz w.r.t.\ $(t,x)\in\RR_+\times\RR^d$, uniformly w.r.t.\ $\ee$ (at least for $\ee$ small enough).
\end{lem}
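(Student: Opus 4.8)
The plan is to bound the modulus of continuity of $\varphi^{\varepsilon}:=\varepsilon\log u^{\varepsilon}$ in the $x$-variable and in the $t$-variable separately, directly from the Feynman--Kac representation~\eqref{eq:FK} and the a priori estimate~\eqref{eq:unifBound}. Throughout I use, as in the proof of Lemma~\ref{lem:unifContinuity}, that Assumption~\ref{it:1} together with Morrey's inequality makes $R(\cdot,v)$ Lipschitz with constant at most $M$, uniformly for $v$ in the annulus $\mathcal{H}$, and that by~\eqref{eq:unifBound} one has $v^{\varepsilon}_{s}\in\mathcal{H}$ for all $s$ once $\varepsilon$ is small; $L_{h}$ denotes the uniform Lipschitz constant of $h_{\varepsilon}$.

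For the $x$-direction, fix $t\ge 0$ and $x,y\in\RR^{d}$ and represent $u^{\varepsilon}(t,x)$ and $u^{\varepsilon}(t,y)$ by~\eqref{eq:FK} with the \emph{same} Brownian motion, namely $X^{\varepsilon}_{s}=x+\sqrt{\varepsilon}B_{s}$ versus $y+\sqrt{\varepsilon}B_{s}$. Trajectory by trajectory, the two exponents in~\eqref{eq:FK} differ by at most $\varepsilon^{-1}(L_{h}+Mt)|x-y|$ (the Lipschitz bound on $h_{\varepsilon}$ controls the boundary term, the Lipschitz bound on $R(\cdot,v^{\varepsilon}_{s})$ controls the integral term), so taking expectations gives $e^{-(L_{h}+Mt)|x-y|/\varepsilon}u^{\varepsilon}(t,y)\le u^{\varepsilon}(t,x)\le e^{(L_{h}+Mt)|x-y|/\varepsilon}u^{\varepsilon}(t,y)$, i.e.\ $|\varphi^{\varepsilon}(t,x)-\varphi^{\varepsilon}(t,y)|\le(L_{h}+Mt)|x-y|$, uniformly in $\varepsilon$ small.

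For the $t$-direction, fix $t\ge 0$ and $h>0$. Splitting the integral in~\eqref{eq:FK} at time $h$ and using the Markov property of $X^{\varepsilon}$ exactly as in~\eqref{eq:oneEq?},
\[
u^{\varepsilon}(t+h,x)=\mathbb{E}_{x}\!\left[\exp\!\Bigl(\tfrac1\varepsilon\!\int_{0}^{h}\!R(X^{\varepsilon}_{s},v^{\varepsilon}_{t+h-s})\,ds\Bigr)\,u^{\varepsilon}(t,X^{\varepsilon}_{h})\right].
\]
Bounding the exponential factor by $e^{\pm Mh/\varepsilon}$ (since $|R|\le M$ on $\mathcal{H}$), inserting the $x$-estimate $u^{\varepsilon}(t,x)\,e^{-c|X^{\varepsilon}_{h}-x|/\varepsilon}\le u^{\varepsilon}(t,X^{\varepsilon}_{h})\le u^{\varepsilon}(t,x)\,e^{c|X^{\varepsilon}_{h}-x|/\varepsilon}$ with $c:=L_{h}+Mt$, and recalling $X^{\varepsilon}_{h}-x=\sqrt{\varepsilon}B_{h}$ with $B_{h}\sim\mathcal{N}(0,hI_{d})$, one gets
\[
u^{\varepsilon}(t,x)\,e^{-Mh/\varepsilon}\,\mathbb{E}\bigl[e^{-c|B_{h}|/\sqrt{\varepsilon}}\bigr]\ \le\ u^{\varepsilon}(t+h,x)\ \le\ u^{\varepsilon}(t,x)\,e^{Mh/\varepsilon}\,\mathbb{E}\bigl[e^{c|B_{h}|/\sqrt{\varepsilon}}\bigr].
\]
By Jensen's inequality and Gaussian concentration of the $1$-Lipschitz map $w\mapsto|w|$, $-c\sqrt{d\varepsilon h}\le\varepsilon\log\mathbb{E}\bigl[e^{\pm c|B_{h}|/\sqrt{\varepsilon}}\bigr]\le c\sqrt{d\varepsilon h}+\tfrac12 c^{2}h$, whence $|\varphi^{\varepsilon}(t+h,x)-\varphi^{\varepsilon}(t,x)|\le(M+\tfrac12 c^{2})h+c\sqrt{d\varepsilon h}$. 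Combined with the $x$-estimate this yields, on any compact $[0,T]\times\overline{B}_{R}$, a bound $|\varphi^{\varepsilon}(t,x)-\varphi^{\varepsilon}(t',x')|\le C_{T}(|x-x'|+|t-t'|)+C_{T}\sqrt{\varepsilon|t-t'|}$ with $C_{T}$ independent of $\varepsilon$, which gives the uniform Lipschitz control of the statement (and, via Arzel\`a--Ascoli, the uniform convergence on compacts and the Lipschitz regularity of $V$ needed in Theorem~\ref{thm:main-Lipschitz}).

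The delicate point is the $t$-direction: unlike in space one cannot keep the two trajectories on a common time interval, so a Brownian increment $B_{h}$ of scale $\sqrt{\varepsilon h}$ inevitably enters and the crux is the estimate of $\varepsilon\log\mathbb{E}[e^{c|B_{h}|/\sqrt{\varepsilon}}]$; its leading term $\tfrac12 c^{2}h$ is exactly the quadratic Hamiltonian $\tfrac12|\nabla\varphi|^{2}$, while the residual $c\sqrt{\varepsilon h}$, which stems only from the merely Lipschitz (rather than $W^{2,\infty}$) regularity of $h_{\varepsilon}$, vanishes in the limit $\varepsilon\to 0$ and can be removed altogether by a second-order Taylor expansion if $h_{\varepsilon}$ is assumed bounded in $W^{2,\infty}$ uniformly in $\varepsilon$.
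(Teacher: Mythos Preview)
Your argument follows the same strategy as the paper: couple via a common Brownian motion for the $x$-variable, then use the Markov property together with the $x$-Lipschitz bound for the $t$-variable, reducing everything to an estimate of $\varepsilon\log\mathbb{E}[e^{c|B_\delta|/\sqrt{\varepsilon}}]$. The paper handles this last Gaussian moment by an ``elementary computation'' and writes the result as $cnC\delta$, whereas you compute it honestly and keep the extra term $c\sqrt{d\varepsilon h}$.

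You are right to keep that term: a bound of the form $\mathbb{E}[e^{c|B_\delta|/\sqrt{\varepsilon}}]\le e^{C\delta/\varepsilon}$ uniformly in $\delta,\varepsilon$ is in fact false (check the slope at $\delta=0$), so the paper is slightly cavalier here. Your bound $|\varphi^\varepsilon(t+h,x)-\varphi^\varepsilon(t,x)|\le (M+\tfrac12 c^2)h + c\sqrt{d\varepsilon h}$ is correct, but note it does \emph{not} literally give a uniform-in-$\varepsilon$ Lipschitz constant in $t$ (for fixed $\varepsilon$ the map $h\mapsto\sqrt{\varepsilon h}$ is only H\"older-$\tfrac12$ near $0$). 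What it does give---and this is exactly what the application to Theorem~\ref{thm:main-Lipschitz} needs---is uniform equicontinuity (since $\sqrt{\varepsilon h}\le\varepsilon\vee h$) and a Lipschitz limit as $\varepsilon\to 0$. So your parenthetical remark is the correct reading of the lemma; the phrase ``gives the uniform Lipschitz control of the statement'' just before it overstates what you have proved.
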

	\begin{proof}
          Let $(t,x)$ and $(\delta,y)$ in $\mathbb{R}_{+}\times\mathbb{R}^{d}$. Using \eqref{eq:FK}, and writing
          $X^\varepsilon_t=X^\varepsilon_0+\sqrt{\varepsilon}B_t$ for a Brownian motion $B$, we get
		\begin{align*}
		u^\varepsilon(t+\delta,x+y)&=\mathbb{E}\left[\exp\left(-\frac{h_\varepsilon(x+y+\sqrt{\ee}B_{t+\delta})}{\ee}+\frac{1}{\ee}\int_{0}^{t+\delta}R(x+y+\sqrt{\ee}B_{s},v^{\ee}_{t+\delta-s})ds\right) \right].
	\end{align*}
	Now, Markov's property entails
	\begin{align*}
		u^\varepsilon(t+\delta,x+y)&=\mathbb{E}\Bigg[\exp\left(\frac{1}{\ee}\int_{0}^{\delta}R(x+y+\sqrt{\ee}B_{s},v^{\ee}_{t+\delta-s})ds\right)\\ &\qquad\times \mathbb{E}_{x+y+\sqrt{\ee}B_{\delta}}\left[\exp\left(-\frac{h_\varepsilon(X^{\varepsilon}_t)}{\ee}+\frac{1}{\ee}\int_{0}^{t}R(X^\varepsilon_s,v^{\ee}_{t-s})ds \right)\right]\Bigg]\\
		&=\mathbb{E}\Bigg[\exp\left(\frac{1}{\ee}\int_{0}^{\delta}R(x+y+\sqrt{\ee}B_{s},v^{\ee}_{t+\delta-s})ds\right) u^\varepsilon(t,x+y+\sqrt{\ee}B_{\delta}) \Bigg].
		\end{align*}
	Hence, using Assumption \ref{it:1}, we get
	\begin{equation}
	\label{eq:goodeq1}
	e^{-\frac{M \delta}{\ee} }\mathbb{E}\Bigg[ u^\varepsilon(t,x+y+\sqrt{\ee}B_{\delta}) \Bigg] \leq u^\varepsilon(t+\delta,x+y)\leq e^{\frac{M \delta}{\ee} }\mathbb{E}\Bigg[ u^\varepsilon(t,x+y+\sqrt{\ee}B_{\delta}) \Bigg].
	\end{equation}
	In addition, taking into account that $x\to R(x,v)$ is Lipschitz uniformly w.r.t.\ $v\in{\cal H}$ with Lipstchiz norm bounded by $M$, we have 
			\begin{align*}
		u^\varepsilon(t,x+y)&=\mathbb{E}\left[\exp\left(-\frac{h_\varepsilon(x+y+\sqrt{\ee}B_{t})}{\ee}+\frac{1}{\ee}\int_{0}^{t}R(x+y+\sqrt{\ee}B_{s},v^{\ee}_{t-s})ds\right) \right]\\	
		&\leq u^\varepsilon(t,x)\mathbb{E}\left[\exp\left(-\frac{h_\varepsilon(x+y+\sqrt{\ee}B_{t})-h_\varepsilon(x+\sqrt{\ee}B_{t})}{\ee}+\frac{tM\|y\|}{\ee}\right) \right],\\
		\end{align*}
		and
		\[
		u^\varepsilon(t,x+y)\geq  u^\varepsilon(t,x)\mathbb{E}\left[\exp\left(-\frac{h_\varepsilon(x+y+\sqrt{\ee}B_{t})-h_\varepsilon(x+\sqrt{\ee}B_{t})}{\ee}-\frac{tM\|y\|}{\ee}\right) \right].
		\]
		Since $h_\varepsilon$ is Lipstchiz uniformly w.r.t.\ $\varepsilon>0$,
		\[
		u^\varepsilon(t,x+y)\leq u^\varepsilon(t,x)e^{\frac{tM+\sup_\varepsilon\|h_\varepsilon\|_{\text{Lip}}}{\ee}\norm{y}}.
		\]
		Now, using \eqref{eq:goodeq1}, we get
		\[
		u^\varepsilon(t+\delta,x+y)\leq u^\varepsilon(t,x)e^{\frac{M \delta}{\ee}}\mathbb{E}\left[ e^{\frac{tM+\sup_\varepsilon\|h_\varepsilon\|_{\text{Lip}}}{\ee}\norm{y+\sqrt{\ee}B_{\delta}}}\right].
		\]
		Then, using the inequality $\norm{y+\sqrt{\ee}B_{\delta}}\leq\norm{y}+C\sqrt{\varepsilon}\sum_i|B^i_\delta|$ and
              elementary computations, we obtain
		\[
			u^\varepsilon(t+\delta,x+y)\leq u^\varepsilon(t,x)e^{\frac{M \delta}{\ee}}e^{\frac{tM+\sup_\varepsilon\|h_\varepsilon\|_{\text{Lip}}}{\ee}\left(\norm{y}+nC\delta \right)}.
		\]
		Hence,
		\[
		\ee\log u^\varepsilon(t+\delta,x+y)-\ee\log u^\varepsilon(t,x)\leq M\delta+\left(tM+\sup_\varepsilon\|h_\varepsilon\|_{\text{Lip}}\right)\left(\norm{y}+nC\delta \right) .
		\]
		The lower bound is obtained similarly and ends the proof.
		\end{proof}

                In the last results, one would like to be able to characterize the limit measure ${\cal M}_s$ in terms of the zeroes
                of $V(s,\cdot)$, in order to obtain a closed form of the optimization problem. Results on this question are known in
                models with a single resource ($r=1$)~\cite{perthame-barles-08,barles-mirrahimi-al-09,lorz-mirrahimi-al-11}, but the
                known results when $r\geq 2$~\cite{champagnat-jabin-11,champagnat-HDR} require stringent assumptions on the structure
                of the model, and indeed, it is possible to construct examples where there exist several measures ${\cal M}_s$
                satisfying the metastability condition of~\cite{champagnat-jabin-11}. Since our assumptions are more general, we
                cannot expect to obtain such results in full generality, so we will focus in Sections~\ref{sec:discrete}
                and~\ref{sec:uniqueness-E-finite} on a case where precise results can be obtained, granting uniqueness for the
                optimization problem~(\ref{eq:varia}).

\section{Extensions of Theorem~\ref{thm:varad}}
\label{sec:extensions}

The proof of Theorem~\ref{thm:varad} only makes use of few properties of the Brownian motion and of the Laplace operator used to model
mutations in~\eqref{eq:prob}. In particular, one expects that it may hold true for partial differential equations of the form
\begin{equation}
\label{eq:general-PDE}
\partial_{t}u^{\ee}(t,x)=L_\varepsilon u^{\ee}(t,x)+\frac{1}{\ee}u^{\ee}(t,x)R\left(x,v^{\ee}_{t}\right),\quad \forall t>0,\ x\in\RR^d,
\end{equation}
where $L_\varepsilon$ is a linear operator describing mutations.

For our approach to work in this situation, the probabilistic interpretation of Theorem~\ref{thm:FK} must extend to this
case, and Varadhan's lemma must be applied as in the proof of Theorem~\ref{thm:varad}. For this, one needs that
\begin{enumerate}
\item the operator $L_\varepsilon$ is the infinitesimal generator of a Markov process $(X^\varepsilon_t,t\geq 0)$;
\item existence and uniqueness of a weak solution hold for the partial differential equation~\eqref{eq:general-PDE} in an
  appropriate functional space, for example in $C^{1}(\mathbb{R}_{+},\ L^{1}(\mathbb{R}^{d}))$ as in Theorem~\ref{thm:existence}, and
  any $C(\mathbb{R}_{+},\ L^{1}(\mathbb{R}^{d}))$ mild solution to the PDE must be a weak solution;
\item the method used to prove the Feynman-Kac formula of Theorem~\ref{thm:FK} applies, in other words the function
  \begin{equation}
    \label{eq:regularite-a-verifier}
    \bar{u}^\varepsilon(t,x)=\mathbb{E}_x\left[\exp\left(-\frac{h_\varepsilon(X^\varepsilon_t)}{\varepsilon}+\frac{1}{\varepsilon}\int_0^t
        R(X^\varepsilon_s,v^\varepsilon_{t-s})ds\right)\right]
  \end{equation}
  can be shown to be $C(\mathbb{R}_{+},\ L^{1}(\mathbb{R}^{d}))$ (note that the proof that $\bar{u}$ is a mild solution to the PDE
  only relies on the Markov property, so it is true in general);
\item the family of Markov processes $(X^\varepsilon)_{\varepsilon>0}$ must satisfy a large deviations principle with rate
  $\varepsilon^{-1}$ and a good rate function, i.e.\ a lower semicontinuous function with compact level sets.
\end{enumerate}
Note that the compactness argument of Lemma~\ref{lem:unifContinuity} does not depend on the mutation operator. It only follows from
our assumptions on $R$.

For example, all these points apply to the problem
\begin{equation}
\label{eq:PDE-RW}
\partial_{t}u^{\ee}(t,x)=\frac{1}{\varepsilon}\int_{\mathbb{R}^n}\left[u^\varepsilon(t,x+\varepsilon z)-u^\varepsilon(t,x)\right]
K(z)dz+\frac{1}{\ee}u^{\ee}(t,x)R\left(x,v^{\ee}_{t}\right),
\end{equation}
where $K:\mathbb{R}^n\rightarrow \mathbb{R}_+$ is such that
\begin{equation*}
  \int_{\mathbb{R}^n}z K(z)dz=0\quad\text{and}\quad \int_{\mathbb{R}^n} e^{\langle a,z\rangle^{2}}K(z)dz<\infty\text{ for all
  }a\in\mathbb{R}^n.
\end{equation*}
This form of mutation operator has
already been studied in \cite{champagnat-jabin-11,barles-perthame-07,barles-mirrahimi-al-09}. Similar equations are also considered in \cite{jabin-raoul-11,raoul-11,desvillettes-jabin-al-08}. In this case,
we can check all the points above as follows:
\begin{enumerate}
\item The Markov process $X^\varepsilon$ is a continuous-time random walk, with jump rate $\frac{\|K\|_{L^1}}{\varepsilon}$ and
  i.i.d.\ jump steps distributed as $\varepsilon Z$, where the random variable $Z$ has law $\frac{K(z)}{\|K\|_{L^1}}dz$.
\item Existence and uniqueness of the solution to~\eqref{eq:PDE-RW} in $C^{1}(\mathbb{R}_{+},\ L^{1}(\mathbb{R}^{d}))$ follows from
  \cite{barles-mirrahimi-al-09} (this is the point where the finiteness of quadratic exponential moments of $K$ is needed).
\item Since random walks are shift-invariant as Brownian motion, the regularity of~\eqref{eq:regularite-a-verifier} can be proved
  exactly as in the proof of Theorem~\ref{thm:FK}.
\item It is well-known (see \cite[Section 10.3]{LDPKurtz}) that, under the previous assumptions,
  the family $(X^\varepsilon)_{\varepsilon>0}$ satisfies a large deviation principle with rate $\varepsilon^{-1}$ and good rate
  function on $\mathbb{D}([0,t],\mathbb{R}^n)$, the set of c\`adl\`ag functions from $[0,t]$ to $\mathbb{R}^n$, given by
  \[
  I_t(\varphi)=
  \begin{cases}
    \int_0^t\int_{\RR^d}\left(e^{ \langle z,\dot{\varphi}_s\rangle}-1\right) K(z)dz\,ds & \text{if $\varphi$ is absolutely continuous,} \\
    +\infty & \text{otherwise.}
  \end{cases}
  \]
\end{enumerate}

\medskip

We will give in Section~\ref{sec:discrete} another example of extension, in the case where the Markov processes $X^\varepsilon$ take
values in a finite set. However, this case will raise specific difficulties because the rate function of the associated large deviations
principle has non-compact level sets.

\section{Links between the variational and Hamiltom-Jacobi problems}
\label{sec:HJ-VAR}

As explained in the introduction, the convergence of $\ee\log u^{\ee}$ has been studied in various works using PDE approaches, with a
limit solving a Hamilton-Jacobi problem with constraints.
With our approach, instead of a Hamilton-Jacobi equation, we obtain a variational characterization of the limit, under assumptions on
$R$ slightly weaker than those of~\cite{perthame-barles-08,barles-mirrahimi-al-09,lorz-mirrahimi-al-11} and valid for any value of
the parameter $r$, biologically interpreted as a number of resources (see the introduction). Therefore, we obtain naturally the
identification between a solution to Hamilton-Jacobi problems and a variational problem. For example, the next result is a
consequence of~\cite[Eq.\,(5.8)]{lorz-mirrahimi-al-11}.

\begin{cor}
  \label{cor:HJ-Var}
  In addition to the assumptions of Section~\ref{sec:problem}, assume that $r=1$, $h_\varepsilon$ is $C^2$ uniformly in
  $\varepsilon>0$ and there exist constants $A,B,C,D$ such that, for all $x\in\RR^d$, $v_{\textnormal{min}}\leq \|v\|_1\leq
  v_{\textnormal{max}}$ and $\varepsilon>0$,
  \begin{gather*}
    -A|x|^2\leq R(x,v)\leq B-A^{-1}|x|^2, \\
    -B+A^{-1}|x|^2\leq h_\varepsilon(x)\leq B+A|x|^2, \\
    -C\textnormal{Id}\leq D^2 R(x,v)\leq -C^{-1}\textnormal{Id}, \\
    C^{-1}\textnormal{Id}\leq D^2 h_\varepsilon(x)\leq C\textnormal{Id}, \\
    \Delta(\Psi_1 R)\geq -D,
  \end{gather*}
  where the third oud fourth inequalities have to be understood in the sense of symmetric matrices and $\textnormal{Id}$ is the
  $n$-dimensional identity matrix. Then, $v^{\varepsilon_k}$ converges in $L^1_{\textnormal{loc}}(\RR_+)$ to a nondecreasing limit
  $\bar{v}$ along the subsequence $\varepsilon_k$ of Lemma~\ref{lem:unifContinuity}, the kernel ${\cal M}$ satisfies
  \begin{equation}
    \label{eq:expr-M-HJ-Var}
    {\cal M}_s(dy)=\delta_{\bar{v}(s)}(dy),\quad\forall s\geq 0,    
  \end{equation}
  and the limit $V$ of Theorem~\ref{thm:varad} solves in the viscosity sense
  \begin{equation}
    \label{eq:HJ-Lorz}
    \begin{cases}
      \partial_t V(t,x)=R(x,\bar{v}_t)+\frac{1}{2}|\nabla V(t,x)|,\quad\forall t\geq 0,\ x\in\RR^d, \\
      \max_{x\in\RR^d}V(t,x)=0,\quad\forall t\geq 0.
    \end{cases}
  \end{equation}
\end{cor}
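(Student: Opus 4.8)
The plan is to import the relevant conclusions of~\cite{lorz-mirrahimi-al-11} and to translate them into our probabilistic language, using the a priori estimates of Proposition~\ref{prop:prelimEstimates} and the identification of $V$ as the limit of $\varepsilon_k\log u^{\varepsilon_k}$ from Theorem~\ref{thm:varad}. When $r=1$, under the stated quadratic growth and convexity hypotheses on $R(\cdot,v)$ and $h_\varepsilon$ (which strengthen the assumptions of Section~\ref{sec:problem} precisely in the way that~\cite{lorz-mirrahimi-al-11} requires), their analysis shows that along a subsequence $\varepsilon\log u^\varepsilon$ converges locally uniformly to a function solving~\eqref{eq:HJ-Lorz} in the viscosity sense, together with the constraint $\max_x V(t,\cdot)=0$, and moreover that $v^\varepsilon_t=\int\Psi_1 u^\varepsilon(t,\cdot)$ converges in $L^1_{\mathrm{loc}}(\RR_+)$ to a nondecreasing limit $\bar v$ (their monotonicity comes from the sign assumption $\Delta(\Psi_1 R)\geq -D$). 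First I would recall that by Theorem~\ref{thm:existence} and uniqueness, the solution $u^\varepsilon$ here coincides with the one in~\cite{lorz-mirrahimi-al-11}, so all their conclusions apply verbatim to our $u^\varepsilon$.

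The second step is to reconcile the two notions of limit. Theorem~\ref{thm:varad} extracts a subsequence $(\varepsilon_k)$ along which both $v^{\varepsilon_k}_s$ generates (via Lemma~\ref{lem:unifContinuity}) a kernel ${\cal M}$ and $\varepsilon_k\log u^{\varepsilon_k}(t,x)\to V(t,x)$. Passing if necessary to a further subsequence, we may assume $v^{\varepsilon_k}$ also converges in $L^1_{\mathrm{loc}}$ to the nondecreasing $\bar v$ of~\cite{lorz-mirrahimi-al-11}; then for a.e.\ $s$, $v^{\varepsilon_k}_s\to\bar v(s)$, so the occupation measures $\Gamma^{\varepsilon_k}_T$ of Lemma~\ref{lem:unifContinuity} converge weakly to $\int_0^T\delta_{\bar v(s)}(dy)\,ds$, giving~\eqref{eq:expr-M-HJ-Var}, i.e.\ ${\cal M}_s(dy)=\delta_{\bar v(s)}(dy)$, and hence $\Phi(\varphi)=\int_0^t R(\varphi_s,\bar v(s))\,ds$. (Here one uses that $\mathcal H$ is compact and that $v^{\varepsilon_k}_s$ stays in $\mathcal H$ by Proposition~\ref{prop:prelimEstimates}, so dominated convergence applies to the bounded continuous integrand $R(\varphi_s,\cdot)$.) Simultaneously, the local uniform convergence in~\cite{lorz-mirrahimi-al-11} shows that $\varepsilon_k\log u^{\varepsilon_k}$ converges locally uniformly to the viscosity solution of~\eqref{eq:HJ-Lorz}; by uniqueness of limits this limit must be our $V$. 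Therefore $V$ satisfies~\eqref{eq:HJ-Lorz} in the viscosity sense and the constraint $\max_x V(t,\cdot)=0$, as claimed.

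A subtle point is that~\cite{lorz-mirrahimi-al-11} works with $h_\varepsilon=h$ independent of $\varepsilon$ and with $\frac{\varepsilon}{2}\Delta$ versus possibly $\varepsilon\Delta$ or other normalisations; I would point out (as the excerpt already does) that the assumptions here are ``nearly identical'' to theirs, that the dependence of $h_\varepsilon$ on $\varepsilon$ is harmless since $h_\varepsilon\to h$ in $L^\infty$ uniformly (so $\varepsilon\log u^\varepsilon$ is unaffected in the limit, exactly as exploited in the proof of Theorem~\ref{thm:varad}), and that the precise constant in front of the Laplacian only changes the factor in $I_t$ and hence the constant in the gradient term of~\eqref{eq:HJ-Lorz}, which I would match by inspection. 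The main obstacle is precisely this bookkeeping: checking carefully that the hypotheses of Corollary~\ref{cor:HJ-Var} do imply all the hypotheses under which~\cite[Eq.\,(5.8)]{lorz-mirrahimi-al-11} and their monotonicity and convergence statements hold, and that no additional regularity of $u^\varepsilon$ beyond what Theorem~\ref{thm:existence} gives is secretly used. Everything else is a soft argument combining uniqueness of the PDE solution, uniqueness of subsequential limits, and weak convergence of occupation measures.
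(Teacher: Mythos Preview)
Your approach is essentially the same as the paper's: import the conclusions of \cite{lorz-mirrahimi-al-11} and identify them with the objects produced by Lemma~\ref{lem:unifContinuity} and Theorem~\ref{thm:varad}. The paper's discussion is in fact even shorter than yours, declaring that everything except one point is a direct consequence of \cite{lorz-mirrahimi-al-11}.

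There is, however, one point you gloss over that the paper singles out as the only thing requiring argument: the corollary asserts convergence of the \emph{full} subsequence $(v^{\varepsilon_k})_{k\geq 1}$ from Lemma~\ref{lem:unifContinuity}, not merely of a further subsequence. You write ``passing if necessary to a further subsequence'', which establishes \eqref{eq:expr-M-HJ-Var} (since $\Gamma^{\varepsilon_k}_T$ already converges along the full $(\varepsilon_k)$, identifying its limit along any further subsequence identifies it everywhere), but does not by itself give $L^1_{\mathrm{loc}}$ convergence of $v^{\varepsilon_k}$ along the full $(\varepsilon_k)$. The paper closes this as follows: from \cite{lorz-mirrahimi-al-11}, every subsequence of $(v^{\varepsilon_k})$ admits a further subsequence converging in $L^1_{\mathrm{loc}}$; any such accumulation point $\tilde v$ must satisfy $\mathcal{M}_s=\delta_{\tilde v(s)}$ by your own occupation-measure argument, hence $\tilde v=\bar v$; therefore the full sequence converges. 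Adding this one-line subsubsequence argument completes your proof.
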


The only point of this corollary that is not a direct consequence of~\cite{lorz-mirrahimi-al-11} is the convergence of the full
sequence $(v^{\varepsilon_k})_{k\geq 1}$ to $\bar{v}$. We known from~\cite{lorz-mirrahimi-al-11} that such a convergence is true for
a subsequence and hence~\eqref{eq:expr-M-HJ-Var} holds true. Since, again from~\cite{lorz-mirrahimi-al-11}, any subsequence of
$v^{\varepsilon_k}$ admits a subsequence converging in $L^1$ and any accumulation point in $L^1$ must be $\bar{v}$
by~\eqref{eq:expr-M-HJ-Var}, the result is clear.

The last corollary provides a variational interpretation~\eqref{eq:varia} for the solution to~\eqref{eq:HJ-Lorz} given by the limit
of $\varepsilon_k\log u^{\varepsilon_k}$. Since uniqueness for~\eqref{eq:varia} is not known in general (note however that it is
known under additional assumptions, see~\cite{mirrahimi-roquejoffre-16}), we cannot deduce that any solution to~\eqref{eq:HJ-Lorz}
admits a variational formulation.

Note that Hamilton-Jacobi equations are known to be related to variational problems appearing in control theory. However, this link
is known in general in cases with continuous and time-independent coefficients in the Hamilton-Jacobi
equation~\cite{lions-82,fleming-soner-93}. In our case, it is known that $\bar{v}$ may be discontinuous in general, hence
Corollary~\ref{cor:HJ-Var} cannot be deduced from these general results. Hamilton-Jacobi problems with discontinuous coefficients
were studied in several works but none covers the form of the optimization problem given by our results: first, the question of
appropriate notion of viscosity solutions of irregular Hamilton-Jacobi problems has been studied by several authors starting
from~\cite{perthame-lions,bardi-capuzzo}. In the later reference, these notions are only used to study variational problems
without integral term and with continuous Hamiltonian without time-dependencies. In the book~\cite{clarke}, problems of calculus of
variation with irregular Lagrangian are studied, providing existence of optimal solutions but no link with the Hamilton-Jacobi
problem. Finally, in the article~\cite{vinter-wolenski}, optimal control problems with measurable in time Hamiltonian are studied but
without the integral term in the optimisation problem as in~\eqref{eq:varia}. Hence Corollary~\ref{cor:HJ-Var} seems to give, through
an indirect approach, an original optimization formulation for Hamilton-Jacobi problems with constraints as those
of~\cite{diekmann-jabin-al-05,lorz-mirrahimi-al-11}.

Other mutation operators were also studied with the PDE approach. Up to our knowledge, the only work providing the Hamilton-Jacobi
limit with several resources is~\cite{champagnat-jabin-11}. In this case, we obtain the following corollary.

\begin{cor}
  \label{cor:HJ-Var-chemostat}
  Assume that $d=1$, $r\geq 1$ is arbitrary and $R$ is given by Equation~\eqref{eq:chemostat} with $d(x)\equiv 1$. Consider
  the solution $u^\varepsilon$ of the PDE~\eqref{eq:PDE-RW} with this function $R$ and with a function $K$ in $C^\infty_c$ and such
  that $\int_{\RR}zK(z)\,dz=0$. Assume that the assumptions of Section~\ref{sec:problem} are satisfied, that $\Psi_i$ is $C^2_b$
  for all $1\leq i\leq r$, that $\sum_{i=1}^r c_i\Psi_i(x)\geq 1$ for all $x\in\RR$ and
  \begin{gather*}
    \exists\bar{r}\leq r,\ \forall v_1\in[0,c_1],\ldots,v_r\in[0,c_r],\text{ the function }x\mapsto\sum_{i=1}^{r}v_i\Psi_i(x)-1\text{
      has at most }\bar{r}\text{ roots}, \\
    \forall x_1,\ldots,x_{\bar{r}}\text{ distinct, the $\bar{r}$ vectors }\Psi(x_1),\ldots,\Psi(x_{\bar{r}})\text{ are linearly independent},
  \end{gather*}
  where $\Psi(x)=(\Psi_1(x),\ldots,\Psi_r(x))$. Assume also that $h_\varepsilon$ is $C^2$ for all $\varepsilon>0$, $h_\varepsilon-h$ converges
  to $0$ in $W^{1,\infty}(\RR)$,
  \begin{gather*}
    \sup_{\varepsilon>0}\left\|\partial_x h_\varepsilon\right\|_{L^\infty(\RR)}<\infty\quad\text{and}\quad
    \inf_{\varepsilon>0}\inf_{x\in\RR}\partial_{xx}h_\varepsilon(x)>-\infty.
  \end{gather*}
  Then, $v^{\varepsilon_k}$ converges in $L^1_{\textnormal{loc}}$ to $\bar{v}$ along the subsequence $\varepsilon_k$ of
  Lemma~\ref{lem:unifContinuity}, where $\bar{v}(t)=F(\{V(t,\cdot)=0\})$ for an explicit function $F$
  (see~\cite[Prop.\,1.1]{champagnat-jabin-11}), the kernel ${\cal M}$ satisfies
  \begin{equation*}
    {\cal M}_s(dy)=\delta_{\bar{v}(s)}(dy),\quad\forall s\geq 0,    
  \end{equation*}
  and the limit $V$ of Theorem~\ref{thm:varad} is such that 
  $\varphi(t,x)=V(t,x)-\sum_{i=1}^r\int_0^t \bar{v}_i(s)ds\,\Psi_i(x)$ solves in the viscosity sense
  \begin{equation*}
    \begin{cases}
      \partial_t \varphi(t,x)=H\left(\partial_x\varphi(t,x)+\sum_{i=1}^r\int_0^t \bar{v}_i(s)ds\,\Psi_i(x)\right),\quad\forall t\geq 0,\ x\in\RR^d, \\
      \max_{x\in\RR^d} \left\{\varphi(t,x)+\sum_{i=1}^r\int_0^t \bar{v}_i(s)ds\,\Psi_i(x)\right\}=0,\quad\forall t\geq 0,
    \end{cases}
  \end{equation*}
  where
  \[
  H(p)=\int_{\RR}(e^{pz}-1)K(z)\,dz.
  \]
\end{cor}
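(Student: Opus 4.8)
The plan is to bring the chemostat model with random-walk mutations within the scope of Sections~\ref{sec:Varadhan} and~\ref{sec:extensions}, and then to identify the variational limit $V$ produced there with the Hamilton--Jacobi limit obtained by the PDE method of~\cite{champagnat-jabin-11}. First I would check that the model fits our framework. With $d\equiv 1$, the function $R(x,v)=\sum_{i=1}^r c_i\Psi_i(x)/(1+v_i)-1$ and the hypotheses on $\Psi_i$ (bounded above and below, $C^2_b$, with $\sum_i c_i\Psi_i\geq 1$) satisfy all the assumptions of Section~\ref{sec:problem}: continuity, the two-sided bounds on $\partial_{v_i}R$, the sign conditions on $\min_x R$ and $\max_x R$ for $\|v\|_1$ outside a suitable interval $[v_{\textnormal{min}},v_{\textnormal{max}}]$, and the $W^{2,\infty}$ bound on the annulus $\mathcal H$; and $K\in C^\infty_c$ with $\int_\RR zK(z)dz=0$ trivially satisfies all the moment conditions below~\eqref{eq:PDE-RW} (compact support gives finite exponential moments of every order). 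Hence the four points of Section~\ref{sec:extensions} hold for the PDE~\eqref{eq:PDE-RW}: the mutation process is the continuous-time random walk described there, it is shift-invariant so the regularity of~\eqref{eq:regularite-a-verifier} follows as in the proof of Theorem~\ref{thm:FK}, existence and uniqueness in $C^1(\RR_+,L^1(\RR))$ hold by~\cite{barles-mirrahimi-al-09}, and the walk satisfies an LDP with speed $\varepsilon^{-1}$ and the good rate function $I_t$ of Section~\ref{sec:extensions}. Consequently the conclusions of Theorems~\ref{thm:varad} and~\ref{thm:main-Lipschitz} apply: along the subsequence $(\varepsilon_k)$ of Lemma~\ref{lem:unifContinuity}, $V(t,x)=\lim_k\varepsilon_k\log u^{\varepsilon_k}(t,x)$ exists, is Lipschitz in $(t,x)$, the convergence is locally uniform, and $V$ is given by the variational formula~\eqref{eq:varia} with this $I_t$ and the kernel $\mathcal M$.

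Next I would invoke~\cite[Prop.\,1.1]{champagnat-jabin-11}. The remaining hypotheses of the corollary---the bound $\bar r$ on the number of roots of $x\mapsto\sum_i v_i\Psi_i(x)-1$ together with the linear independence of any $\bar r$ distinct vectors $\Psi(x_j)$, and the assumptions on $h_\varepsilon$ ($C^2$ for each $\varepsilon$, uniformly bounded first derivative, uniform lower bound on the second derivative, and $h_\varepsilon-h\to 0$ in $W^{1,\infty}(\RR)$)---are precisely those under which that reference applies to~\eqref{eq:PDE-RW}. It shows that, up to extracting a further subsequence of $(\varepsilon_k)$, $\varepsilon\log u^\varepsilon$ converges locally uniformly to a limit, that $v^\varepsilon$ converges in $L^1_{\textnormal{loc}}(\RR_+)$ to an explicit function $F$ of the zero set of that limit, and that the shifted function solves in the viscosity sense the constrained Hamilton--Jacobi problem of the statement; the additive correction $\sum_i\int_0^t\bar v_i(s)ds\,\Psi_i(x)$ is exactly the substitution performed in~\cite{champagnat-jabin-11} to render the Hamiltonian $H(p)=\int_\RR(e^{pz}-1)K(z)dz$ free of explicit $(t,x)$-dependence, and the $W^{1,\infty}$ convergence of $h_\varepsilon$ is what lets one pass to the limit in the initial datum of this equation.

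The two constructions must then be identified. Both limits above are locally uniform limits of $\varepsilon\log u^\varepsilon$ along a sub-subsequence of $(\varepsilon_k)$, so they coincide and equal $V$; hence $V$ (resp.\ its shift $\varphi$) solves the Hamilton--Jacobi problem, and $\bar v(t)=F(\{V(t,\cdot)=0\})$ characterises every $L^1_{\textnormal{loc}}$-accumulation point of $v^{\varepsilon_k}$. Since this characterisation involves only $V$, which is defined along the whole sequence $(\varepsilon_k)$, and since $(v^{\varepsilon_k})$ is bounded by Proposition~\ref{prop:prelimEstimates} hence relatively compact in $L^1_{\textnormal{loc}}$, all accumulation points coincide and $v^{\varepsilon_k}\to\bar v$ in $L^1_{\textnormal{loc}}(\RR_+)$, exactly as in the argument following Corollary~\ref{cor:HJ-Var}. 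Finally, from $v^{\varepsilon_k}\to\bar v$ in $L^1_{\textnormal{loc}}$ I would extract a pointwise a.e.\ convergent sub-subsequence and pass to the limit by dominated convergence in the occupation measures $\Gamma^{\varepsilon_k}_T(A\times B)=\int_A\mathbbm{1}_{v^{\varepsilon_k}_s\in B}\,ds$ of Lemma~\ref{lem:unifContinuity}, obtaining $\Gamma(ds,dy)=\delta_{\bar v(s)}(dy)\,ds$; the disintegration built there then yields $\mathcal M_s(dy)=\delta_{\bar v(s)}(dy)$ for a.e.\ $s$, which is the claimed formula.

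The step I expect to be most delicate is the bookkeeping of the successive extractions in the identification argument, and in particular checking that the map $F$ and the zero-set map $t\mapsto\{V(t,\cdot)=0\}$ are well defined and single-valued under the genericity hypotheses on the $\Psi_i$, so that ``every $L^1_{\textnormal{loc}}$-accumulation point of $v^{\varepsilon_k}$ equals $F(\{V(t,\cdot)=0\})$'' genuinely forces convergence of the full subsequence $(\varepsilon_k)$. A secondary technical point is to track the additive shift $\sum_i\int_0^t\bar v_i(s)ds\,\Psi_i(x)$ consistently through both formulations, so that the Hamilton--Jacobi equation and the constraint $\max_{x}V(t,\cdot)=0$ come out in exactly the stated form.
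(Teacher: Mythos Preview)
Your proposal is correct and follows the same route the paper intends: the paper does not give a separate proof for this corollary but presents it as the analogue of Corollary~\ref{cor:HJ-Var}, with~\cite{champagnat-jabin-11} playing the role that~\cite{lorz-mirrahimi-al-11} played there. Your argument---verify the hypotheses of Sections~\ref{sec:problem} and~\ref{sec:extensions}, invoke~\cite{champagnat-jabin-11} for the PDE limit along a sub-subsequence, identify the two limits by uniqueness of the locally uniform limit of $\varepsilon_k\log u^{\varepsilon_k}$, then upgrade to convergence of the full $(v^{\varepsilon_k})_k$ via the accumulation-point argument and read off $\mathcal{M}_s=\delta_{\bar v(s)}$---is exactly the structure of the proof sketch following Corollary~\ref{cor:HJ-Var}, and you have simply written out the details more fully than the paper does.
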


\section{The finite case}
\label{sec:discrete}

We consider here another extension of Theorem~\ref{thm:varad} to processes satisfying a large deviation principle, in the case where
the trait space if finite. Since in this case the equation satisfied by the limit $V$ is simpler, we are able to study it in details
in Section~\ref{sec:uniqueness-E-finite}. We consider a finite set $E$ and the system of ordinary differential equations
\begin{equation}
\label{eq:EDO}
\left\{
\begin{array}{l}
\dot{u}^{\ee}(t,i)=\sum_{j\in E\backslash\{i
  \}}\exp\left(-\frac{\mathfrak{T}(i,j)}{\ee}\right)(u^{\ee}(t,j)-u^{\ee}(t,i))+\frac{1}{\ee}u^{\ee}(t,i)R(i,v^{\ee}_{t}),\quad
\forall t\in[0,T],\ \forall i\in E\\
u^{\ee}(0,i)=\exp\left(-\frac{h^{\ee}(i)}{\ee}\right),
\end{array}
\right.
\end{equation}
where $\mathfrak{T}(i,j)\in(0,+\infty]$ for all $i\neq j\in E$, $R(i,u):E\times \mathbb{R}^r\mapsto \mathbb{R}$,
$h^\varepsilon:E\rightarrow\RR$ and $v^{\ee}_{t}=(v^{1,\varepsilon}_t,\ldots,v^{r,\varepsilon})$ is defined by
\[
v^{i,\ee}_{t}=\sum_{j\in E}u(t,j)\Psi_{i}(j), \quad \forall 1\leq i\leq r,
\]
for some functions $\Psi_{i}:E\mapsto(0,+\infty)$. The term $e^{-\varepsilon^{-1}\mathfrak{T}(i,j)}$ corresponds to the mutation rate
from trait $i$ to trait $j$. Its value is by convention $0$ when $\mathfrak{T}(i,j)=+\infty$, which means than mutations are
impossible from state $i$ to state $j$.

The standing assumptions on $\Psi$, $R$ and $h_\varepsilon$ given in Section~\ref{sec:problem} are still assumed to hold true here
replacing $\RR^d$ by $E$ (except of course for the assumptions of regularity in the trait space). We also need the following assumption.
\medskip
  
\noindent{\bf  Assumption on $\mathfrak{T}$}

For all distinct $i,j,k\in E$,
\begin{equation}
\label{hyp:jumpMet}
\mathfrak{T}(i,j)+\mathfrak{T}(j,k)>\mathfrak{T}(i,k)
\end{equation}
(with the convention $\infty>\infty$) and we set
\begin{equation}
\label{eq:hypTij}
\eta:=\inf_{i,j,k\in E \, \text{distinct s.t.\ }\mathfrak{T}(i,k)<\infty}\mathfrak{T}(i,j)+\mathfrak{T}(j,k)-\mathfrak{T}(i,k)>0.
\end{equation}

\medskip
Similarly as for problem \eqref{eq:EDP}, the solution of problem \eqref{eq:EDO} remains bounded. This is given in the following Lemma.
 \begin{lem}
 \label{lem:upBound}
 We have for all $t\geq 0$
 \[
 \frac{v_{\textnormal{min}}-A^{-1}(|E|-1)e^{-\beta/\varepsilon}}{\Psi_{\textnormal{max}}}\leq \sum_{i\in E}u^{\ee}(t,i)\leq \frac{v_{\textnormal{max}}+A(|E|-1)e^{-\gamma/\varepsilon}}{\Psi_{\textnormal{min}}}
 \]
 holds true for any positive $t$ as far as it holds for $t=0$, where $|E|$ stands for the cardinality of $E$,
 \[
 \gamma=\inf\{\mathfrak{T}(i,j)\mid i,j\in E,\ i\neq j \}>0,\quad  \beta=\sup\{\mathfrak{T}(i,j)\mid i,j\in E,\ i\neq j \}
 \]
 and the constants $A$, $v_{\textnormal{min}}$, $v_{\textnormal{max}}$, $\Psi_{\textnormal{min}}$ and $\Psi_{\textnormal{max}}$ are
 defined in Section~\ref{sec:problem}.
 \end{lem}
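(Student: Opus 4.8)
Here is how I would prove Lemma~\ref{lem:upBound}.

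\medskip

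The argument is a transcription to the finite setting of the barrier argument in the proof of Proposition~\ref{prop:prelimEstimates}, the discrete mutation operator playing the role of the Laplacian: its off-diagonal coefficients $e^{-\mathfrak{T}(x,y)/\varepsilon}$ are, since $0<\gamma\le\mathfrak{T}(x,y)\le\beta$, exponentially small in $1/\varepsilon$, exactly as the term $\varepsilon\int\Delta\Psi_i\,u^\varepsilon$ was negligible there. I would first record the positivity $u^\varepsilon(t,x)>0$ for all $t\ge0$ and $x\in E$: writing \eqref{eq:EDO} as $\dot u^\varepsilon(t,\cdot)=M_\varepsilon(t)u^\varepsilon(t,\cdot)$, the matrix $M_\varepsilon(t)$ has nonnegative off-diagonal entries $e^{-\mathfrak{T}(x,y)/\varepsilon}\ge0$, hence is quasi-positive, so the positive orthant is invariant and $u^\varepsilon(0,x)=e^{-h^\varepsilon(x)/\varepsilon}>0$ propagates. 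Positivity is used throughout: setting $S(t)=\sum_{x\in E}u^\varepsilon(t,x)$, it gives $\Psi_{\textnormal{min}}S(t)\le v^{i,\varepsilon}_t\le\Psi_{\textnormal{max}}S(t)$ for every resource $1\le i\le r$ and, since all coordinates of $v^\varepsilon_t$ are then nonnegative, $\|v^\varepsilon_t\|_1=\sum_{i=1}^r v^{i,\varepsilon}_t$; it also sandwiches the reaction term of the equation for $S$ between $\min_{x}R(x,v^\varepsilon_t)\,S(t)$ and $\max_{x}R(x,v^\varepsilon_t)\,S(t)$.

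Next, summing \eqref{eq:EDO} over $x\in E$ and reindexing the double mutation sum gives $\dot S(t)=\sum_{x\in E}u^\varepsilon(t,x)\sum_{y\neq x}\bigl(e^{-\mathfrak{T}(y,x)/\varepsilon}-e^{-\mathfrak{T}(x,y)/\varepsilon}\bigr)+\tfrac1\varepsilon\sum_{x\in E}u^\varepsilon(t,x)R(x,v^\varepsilon_t)$, whence, using $\mathfrak{T}\ge\gamma$ and positivity,
\[
-(|E|-1)e^{-\gamma/\varepsilon}S(t)+\tfrac1\varepsilon S(t)\min_{x}R(x,v^\varepsilon_t)\ \le\ \dot S(t)\ \le\ (|E|-1)e^{-\gamma/\varepsilon}S(t)+\tfrac1\varepsilon S(t)\max_{x}R(x,v^\varepsilon_t).
\]
For the upper bound, set $\bar S:=\bigl(v_{\textnormal{max}}+A(|E|-1)e^{-\gamma/\varepsilon}\bigr)/\Psi_{\textnormal{min}}$ and suppose $S(t)>\bar S$. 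Then $v^{1,\varepsilon}_t\ge\Psi_{\textnormal{min}}S(t)>v_{\textnormal{max}}+A(|E|-1)e^{-\gamma/\varepsilon}$, so $\|v^\varepsilon_t\|_1\ge v^{1,\varepsilon}_t>v_{\textnormal{max}}+A(|E|-1)e^{-\gamma/\varepsilon}$; lowering the first coordinate of $v^\varepsilon_t$ by $A(|E|-1)e^{-\gamma/\varepsilon}$ produces a point of $\{\|\cdot\|_1>v_{\textnormal{max}}\}$, so Assumption~\ref{it:c} combined with $\partial_{v_1}R\le-A^{-1}$ from Assumption~\ref{it:3} forces $R(x,v^\varepsilon_t)<-(|E|-1)e^{-\gamma/\varepsilon}$ for every $x\in E$. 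Inserting this into the right-hand inequality above yields $\dot S(t)<(|E|-1)e^{-\gamma/\varepsilon}S(t)(1-1/\varepsilon)<0$ for $\varepsilon$ small (say $\varepsilon<1$). A standard barrier argument then concludes: if $S(t_1)>\bar S$ for some $t_1$, let $t_0=\sup\{t\le t_1:S(t)=\bar S\}$; on $(t_0,t_1]$ one has $S>\bar S$, hence $\dot S<0$, contradicting $S(t_0)=\bar S<S(t_1)$. Thus $S(t)\le\bar S$ for all $t$ as soon as it holds at $t=0$ (which is the case by the finite-trait version of Assumption~\ref{it:borne-h_eps}).

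The lower bound is obtained by the symmetric argument, using now the first sign condition of Assumption~\ref{it:c}: when $S(t)$ drops below the threshold, $\|v^\varepsilon_t\|_1$ becomes small enough that Assumption~\ref{it:c} together with $\partial_{v_i}R\le-A^{-1}$ forces $\min_{x}R(x,v^\varepsilon_t)$ to exceed a positive multiple of the relevant exponentially small quantity, which for $\varepsilon$ small dominates the mutation loss $(|E|-1)e^{-\gamma/\varepsilon}S(t)$ in the left-hand inequality and makes $\dot S(t)>0$; the same barrier argument applies verbatim, using that $S(0)$ already satisfies the bound. The point I expect to require the most care — and the main obstacle — is precisely this conversion of the $\|\cdot\|_1$-constraint of Assumption~\ref{it:c} into a quantitative sign of $R$ through the monotonicity Assumption~\ref{it:3}, bookkeeping carefully the exponentially small margins (this is what produces $e^{-\gamma/\varepsilon}$ on the upper side and $e^{-\beta/\varepsilon}$ on the lower side); it is, however, entirely parallel to the continuous case of Proposition~\ref{prop:prelimEstimates}. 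Note that Assumption~\eqref{hyp:jumpMet} on $\mathfrak{T}$ is not used in this lemma: only $\gamma>0$ (and $\beta\ge\gamma$) matters.
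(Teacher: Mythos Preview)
Your proposal is correct and follows essentially the same approach as the paper's proof: summing the ODE over $E$, bounding the mutation contribution by $(|E|-1)e^{-\gamma/\varepsilon}S(t)$, and using Assumptions~\ref{it:3} and~\ref{it:c} to force the sign of $R$ once $S(t)$ crosses the barrier. Your write-up is in fact more careful than the paper's, which omits the $1/\varepsilon$ factor in front of the reaction term in its displayed inequality and leaves the positivity and barrier arguments implicit; your explicit treatment of these points (and the remark that Assumption~\eqref{hyp:jumpMet} is not needed here) is correct and welcome.
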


Hence we shall also assume in the sequel that
\[
\frac{v_{\textnormal{min}}-A^{-1}(|E|-1)e^{-\beta/\varepsilon}}{\Psi_{\textnormal{max}}}\leq \sum_{i\in E}e^{-h_\varepsilon(i)/\varepsilon}\leq
\frac{v_{\textnormal{max}}+A(|E|-1)e^{-\gamma/\varepsilon}}{\Psi_{\textnormal{min}}}.
\]
We also define the compact set
\[
{\cal H}:=\left\{ u\in\RR_+^E : \frac{v_{\textnormal{min}}-A^{-1}(|E|-1)e^{-\beta/\varepsilon}}{\Psi_{\textnormal{max}}}\leq
  \|u\|_1\leq
\frac{v_{\textnormal{max}}+A(|E|-1)e^{-\gamma/\varepsilon}}{\Psi_{\textnormal{min}}}
\right\}
\]
which is invariant for the dynamics~\eqref{eq:EDO}.

\begin{proof}
 Set, for any positive real number $t$,
 \[
 u^{\ee}(t)=\sum_{i\in E}u^{\ee}(t,i).
 \]
 According to \eqref{eq:EDO}, we get
 \[
 \dot{u}^{\ee}(t)\leq u^{\ee}(t) \, \max_{i\in E}R(i,v^{\ee}_{t})+\sum_{i\in E}\sum_{j\in E\setminus\{ i\}}\exp\left({-\frac{\mathfrak{T}(i,j)}{\ee}}\right)(u^{\ee}(t,j)-u^{\ee}(t,i)).
 \]
 Hence
 \[
 \dot{u}^{\ee}(t)\leq u^{\ee}(t) \left( \max_{i\in E}R(i,v^{\ee}_{t})+(\left|E\right|-1)e^{-\frac{\gamma}{\ee}}\right).
 \]
 Now, using Hypothesis \eqref{it:3} and \eqref{it:c}, we obtain that
 \[
 \max_{i\in E}R(i,v^{\ee}_{t})<-(\left|E\right|-1)e^{-\frac{\gamma}{\ee}}
 \]
 as soon as $v^{i,\ee}_{t}>v_{\text{max}}+A(\left|E\right|-1)e^{-\frac{\gamma}{\ee}}$, for some $i$ in $\{1,\dots,r\}$.
 Using Hypothesis \eqref{eq:psiEst}, we deduce that
 \[
 u^\varepsilon(t)\leq \frac{1}{\Psi_{min}}\left(v_{\text{max}}+ A(\left|E\right|-1)e^{-\frac{\gamma}{\ee}}\right)
 \]
 as soon as this holds at time $t=0$. The proof of the lower bound is similar.
 \end{proof}
Our first goal is to describe the solution $u^{\ee}$ of the system using an integral representation similar to~\eqref{eq:FK}.
Let $(X^{\ee}_{s},\ s\in[0,T])$ be the Markov processes in $E$ with infinitesimal generator
\[
L^{\ee}f(i)=\sum_{j\in E}(f(j)-f(i))e^{-\frac{\mathfrak{T}(i,j)}{\varepsilon}},
\]
i.e.\ the Markov process which jumps from state $i\in E$ to $j\neq i$ with rate 
$\exp(-\mathfrak{T}(i,j)/\ee)$.
 
\begin{prop} (Integral representation) For any positive real number $t$ and any element $i$ of $E$, we have
\[
u^{\ee}\left(t,i\right)=\mathbb{E}_{i}\left[\exp\left(-\frac{h^{\ee}(X^{\ee}_{t})}{\ee}+\frac{1}{\ee}\int_{0}^{t}R\left(X^{\ee}_{s},~ v^{\ee}_{t-s} \right)ds \right) \right].
\]
\end{prop}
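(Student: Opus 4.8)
The plan is to mirror the proof of Theorem~\ref{thm:FK}, which is considerably simpler here since $E$ is finite. Let $u^\ee$ be the (unique) solution of the system~\eqref{eq:EDO}: it exists globally because the vector field is affine in $u$ for frozen $v$ and $v\mapsto R(i,v)$ is Lipschitz by Assumption~\ref{it:3}, and by Lemma~\ref{lem:upBound} the compact set ${\cal H}$ is invariant. Once $u^\ee$ is fixed, the curve $t\mapsto v^\ee_t$ is a \emph{given} continuous, ${\cal H}$-valued map, so for each $i$ the function $t\mapsto R(i,v^\ee_t)$ is continuous and bounded, say by $\bar M$. Write $g^\ee(i):=\exp(-h^\ee(i)/\ee)$ and let $\bar u^\ee(t,i)$ denote the right-hand side of the claimed formula. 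It is then enough to prove that $\bar u^\ee$ solves the \emph{linear} Cauchy problem
\[
\dot w(t,i)=L^\ee w(t,\cdot)(i)+\tfrac1\ee R(i,v^\ee_t)\,w(t,i),\qquad w(0,i)=g^\ee(i)\quad(i\in E),
\]
since such a linear system in finite dimension with continuous-in-time coefficients has a unique solution, of which $u^\ee$ is trivially one.

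The initial condition is immediate: $\bar u^\ee(0,i)=\EEf{i}[g^\ee(X^\ee_0)]=g^\ee(i)$. Continuity of $t\mapsto\bar u^\ee(t,i)$ follows from dominated convergence, the integrand being bounded by $e^{t\bar M/\ee}\max_j g^\ee(j)$ while $s\mapsto X^\ee_s$ is \cadlag\ with finitely many jumps on $[0,t]$ and $s\mapsto v^\ee_s$ is continuous. To get the differential equation I would use the Markov property exactly as in Lemma~\ref{lem:mild}: splitting $\int_0^{t+h}=\int_0^h+\int_h^{t+h}$, shifting $s\mapsto s+h$ in the second integral and conditioning at time $h$ yields
\[
\bar u^\ee(t+h,i)=\EEf{i}\!\left[\exp\!\left(\tfrac1\ee\int_0^h R(X^\ee_s,v^\ee_{t+h-s})\,ds\right)\bar u^\ee(t,X^\ee_h)\right].
\]
Subtracting $\bar u^\ee(t,i)$, dividing by $h$ and letting $h\downarrow0$, the exponential factor equals $1+\tfrac{h}{\ee} R(i,v^\ee_t)+o(h)$ (since $X^\ee_s=i$ before the first jump time, $\PPf{i}(X^\ee\text{ jumps in }[0,h])=O(h)$, and $v^\ee$ is continuous), while $\tfrac1h\EEf{i}[\bar u^\ee(t,X^\ee_h)-\bar u^\ee(t,i)]\to L^\ee\bar u^\ee(t,\cdot)(i)$ by the very definition of the generator on a finite space. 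Hence the right derivative of $\bar u^\ee(\cdot,i)$ exists and equals $L^\ee\bar u^\ee(t,\cdot)(i)+\tfrac1\ee R(i,v^\ee_t)\bar u^\ee(t,i)$, which is continuous in $t$; therefore $\bar u^\ee(\cdot,i)\in C^1$, it solves the linear Cauchy problem, and by uniqueness $\bar u^\ee\equiv u^\ee$.

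An equivalent and arguably cleaner route is a martingale computation: fix $t>0$ and set $N_s:=u^\ee(t-s,X^\ee_s)\exp(\tfrac1\ee\int_0^s R(X^\ee_u,v^\ee_{t-u})\,du)$ for $s\in[0,t]$. Dynkin's formula for the \cadlag\ chain $X^\ee$ applied to $s\mapsto u^\ee(t-s,X^\ee_s)$, combined with integration by parts against the continuous finite-variation exponential factor, shows that $N_s-N_0$ is the stochastic integral of that factor against the compensated jump martingale of $X^\ee$ plus the Lebesgue integral of $\exp(\tfrac1\ee\int_0^u\cdots)$ against
\[
\Big(-\dot u^\ee(t-u,i)+L^\ee u^\ee(t-u,\cdot)(i)+\tfrac1\ee R(i,v^\ee_{t-u})\,u^\ee(t-u,i)\Big)\Big|_{i=X^\ee_u}\,du,
\]
which vanishes identically by~\eqref{eq:EDO}. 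Thus $N$ is a bounded martingale and $u^\ee(t,i)=N_0=\EEf{i}[N_t]=\bar u^\ee(t,i)$, using $u^\ee(0,\cdot)=g^\ee$ at the last step.

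I do not expect a genuine obstacle. The only points requiring care are the $o(h)$ and boundedness estimates in the $h\downarrow0$ limit (respectively, the integrability that upgrades the local martingale $N$ to a true martingale), which are trivial here because $E$ is finite and $R(\cdot,v^\ee_\cdot)$ is bounded thanks to Lemma~\ref{lem:upBound}; and the conceptual point that $v^\ee$ must be kept frozen as the datum produced by the \emph{given} solution $u^\ee$, so that the identity $\bar u^\ee\equiv u^\ee$ follows from uniqueness for a genuinely linear problem rather than for the full nonlinear system.
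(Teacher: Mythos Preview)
Your proposal is correct, and your first approach is essentially the paper's argument: both freeze $v^\ee$ as given data, use the Markov property to identify $\bar u^\ee$ as a solution of the linear problem (the paper packages this via the mild/Duhamel formulation from Lemma~\ref{lem:mild}, you differentiate directly), and conclude by uniqueness of the linear Cauchy problem in finite dimension. Your second martingale route is a valid and classical alternative not used in the paper.
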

\begin{proof}
First note that the part of the proof of Lemma \ref{lem:mild} showing that \eqref{eq:FK} is a mild solution of problem \eqref{eq:prob} do not rely in any manner on the Brownian nature of $B^{\ee}$. As a consequence, one can directly deduce that $u^{\ee}\left(t,i\right)$ statifies
	\[
	u^{\ee}(t,i)=P_{t}^{\ee}(e^{-h^\ee/\ee})(i)+\int_{0}^{t}P^{\ee}_{t-s}\left(u^{\ee}(s,i)R(i,v^{\ee}_{s})\right)\ ds,
	\]
	where $(P^{\ee}_{t},\ t\in\mathbb{R}_{+})$ now stands for the semigroup generated by $L^{\ee}$ (i.e.\ a simple exponential of
        matrix since $E$ is finite). This last expression is the
        Duhamel formulation of~\eqref{eq:EDO}. 
\end{proof}
The next result proves a weak large deviations principle  (i.e.\ a large deviations principle with upper bounds only for compact sets of $\mathbb{D}([0,T],E)$) for the laws of $X^{\ee}$.
\begin{prop} (Weak LDP)
	\label{prop:LDP}
$(X^{\ee})_{\ee\geq0}$ satisfies a weak LDP with rate function 
\[
\left.
\begin{array}{lcll}
I_{T}:&\mathbb{D}([0,T],E)&\mapsto&\mathbb{R}\\
&\varphi&\to& \sum_{l=1}^{N_{\varphi}}\mathfrak{T}\left(\varphi_{t^\varphi_{l}-},\varphi_{t^\varphi_{l}}\right),
\end{array}
\right.
\]
where $\mathbb{D}([0,T],E)$ is the space of c\`adl\`ag functions from $[0,T]$ to $E$ and $N_{\varphi}$ is the number of jumps of
$\varphi$ and $(t^\varphi_{l})_{1\leq l\leq N_{\varphi}}$ the increasing sequence of jump times of $\varphi$.
\end{prop}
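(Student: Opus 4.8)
The plan is to prove the weak LDP by a direct Freidlin--Wentzell-type argument exploiting the explicit structure of $X^\varepsilon$: under $\mathbb{P}_i$ it is the pure-jump process on $E$ whose embedded chain $(Y_n)_{n\geq0}$ moves from $k$ to $\ell\neq k$ with probability $q^\varepsilon_{k\ell}/\lambda^\varepsilon_k$, where $q^\varepsilon_{k\ell}=e^{-\mathfrak{T}(k,\ell)/\varepsilon}$ and $\lambda^\varepsilon_k=\sum_{\ell\neq k}q^\varepsilon_{k\ell}$, and whose holding times $\sigma_1,\sigma_2,\dots$ are, conditionally on $(Y_n)$, independent with $\sigma_n\sim\mathrm{Exp}(\lambda^\varepsilon_{Y_{n-1}})$; note $\lambda^\varepsilon_k\to0$ and $X^\varepsilon$ is non-explosive. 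Two elementary facts about the finite discrete space $E$ will be used repeatedly: (i) every c\`adl\`ag path $[0,T]\to E$ has finitely many jumps (a jump of size $\geq\rho_0:=\tfrac12\min_{k\neq\ell}\mathrm{dist}_E(k,\ell)>0$ cannot accumulate without destroying a one-sided limit), so $I_T(\varphi)<\infty$ unless $\varphi$ makes a jump $k\to\ell$ with $\mathfrak{T}(k,\ell)=\infty$; (ii) if the Skorokhod distance of $\psi$ to $\varphi$ is $<\rho_0$ then $\psi=\varphi\circ\lambda$ for an increasing homeomorphism $\lambda$ of $[0,T]$ with $\|\lambda-\mathrm{id}\|_\infty<\rho_0$, so $\psi$ has exactly the same jumps as $\varphi$ (same number, order and type), at times within $\rho_0$ of those of $\varphi$. (We read $I_T(\varphi)=+\infty$ for $\varphi_0\neq i$, which is forced since $X^\varepsilon_0=i$.)

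For the \emph{lower bound}, take $G$ open and $\varphi\in G$ with $I_T(\varphi)<\infty$: then $\varphi_0=i$ and $\varphi$ has jumps $i_0\to i_1\to\cdots\to i_N$ at times $0<t_1<\cdots<t_N$ with all $\mathfrak{T}(i_{l-1},i_l)<\infty$, and after an arbitrarily small perturbation in $G$ we may assume $t_N<T$. Choosing $\rho<\rho_0$ with $B(\varphi,\rho)\subset G$ and $\delta$ small, the event that $(Y_n)$ starts with $i_0\to\cdots\to i_N$, that $\sigma_l\in(t_l-t_{l-1}-\delta,t_l-t_{l-1}+\delta)$ for $l\leq N$, and that $\sigma_{N+1}>T-t_N+N\delta$, lies in $\{X^\varepsilon\in B(\varphi,\rho)\}$; by conditional independence its probability is $\bigl(\prod_{l=1}^N q^\varepsilon_{i_{l-1}i_l}/\lambda^\varepsilon_{i_{l-1}}\bigr)$ times a product of $N$ holding-window probabilities times $\mathbb{P}(\sigma_{N+1}>T-t_N+N\delta)$. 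Since $\lambda^\varepsilon_k\to0$, for $\varepsilon$ small each window probability is $\geq\tfrac12\delta\lambda^\varepsilon_{i_{l-1}}$ and the last factor is $\geq\tfrac12$, so the $1/\lambda^\varepsilon_{i_{l-1}}$ cancel and the whole probability is $\geq c_\delta\,e^{-I_T(\varphi)/\varepsilon}$; taking $\varepsilon\log$, letting $\varepsilon\to0$, then optimizing over $\varphi\in G$, yields $\liminf_\varepsilon\varepsilon\log\mathbb{P}(X^\varepsilon\in G)\geq-\inf_G I_T$.

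For the \emph{upper bound over compacts}, fix a compact $K$ and $a<\inf_K I_T$ (nothing to prove if $\inf_K I_T=0$). For $\varphi\in K$ pick $\rho_\varphi<\rho_0$ small enough that on $\{X^\varepsilon\in B(\varphi,\rho_\varphi)\}$ the process $X^\varepsilon$ reproduces the jumps of $\varphi$ with all its jump times in $[0,T]$ (using that a CTMC a.s.\ does not jump exactly at $T$ to deal with a possible jump of $\varphi$ at $T$). If $\varphi$ makes a jump $k\to\ell$ with $\mathfrak{T}(k,\ell)=\infty$ then $q^\varepsilon_{k\ell}=0$ and $\mathbb{P}(X^\varepsilon\in B(\varphi,\rho_\varphi))=0$; otherwise, writing $i_0\to\cdots\to i_{N_\varphi}$ for the jumps of $\varphi$ and $T_{N_\varphi}=\sigma_1+\cdots+\sigma_{N_\varphi}$, from $\{T_{N_\varphi}\leq T\}\subset\bigcap_{l\leq N_\varphi}\{\sigma_l\leq T\}$ and conditional independence,
\[
\mathbb{P}\bigl(X^\varepsilon\in B(\varphi,\rho_\varphi)\bigr)\leq\Bigl(\prod_{l=1}^{N_\varphi}\frac{q^\varepsilon_{i_{l-1}i_l}}{\lambda^\varepsilon_{i_{l-1}}}\Bigr)\prod_{l=1}^{N_\varphi}\bigl(1-e^{-\lambda^\varepsilon_{i_{l-1}}T}\bigr)\leq T^{N_\varphi}\prod_{l=1}^{N_\varphi}q^\varepsilon_{i_{l-1}i_l}=T^{N_\varphi}e^{-I_T(\varphi)/\varepsilon}.
\]
Covering $K$ by finitely many such balls and summing, and using $I_T(\varphi_j)>a$, gives $\mathbb{P}(X^\varepsilon\in K)\leq C\,e^{-a/\varepsilon}$ for $\varepsilon$ small, hence $\limsup_\varepsilon\varepsilon\log\mathbb{P}(X^\varepsilon\in K)\leq-a$; letting $a\uparrow\inf_K I_T$ finishes. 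I would also record that $I_T$ is lower semicontinuous: along a Skorokhod-convergent sequence jumps can only merge ($k\to m\to\ell$ into $k\to\ell$) or collapse (a blip $k\to m\to k$ into none), and assumption~\eqref{hyp:jumpMet} gives $\mathfrak{T}(k,m)+\mathfrak{T}(m,\ell)\geq\mathfrak{T}(k,\ell)$ — still $\infty$ if the right-hand side is — so cost is never lost in the limit.

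The main obstacle is the compact upper bound, and in particular the use of the finite state space: fact (ii) fails in $\RR^d$, but here it forces Skorokhod-proximity to $\varphi$ to reproduce the exact jump structure of $\varphi$, which is what makes the crude inclusion $\{T_{N_\varphi}\leq T\}\subset\bigcap\{\sigma_l\leq T\}$ sufficient and, crucially, keeps $\max_k\lambda^\varepsilon_k$ — exponentially larger than a generic $\lambda^\varepsilon_{i_{l-1}}$ — out of the estimate, so the product collapses cleanly to $T^{N_\varphi}\prod_l q^\varepsilon_{i_{l-1}i_l}$ with exactly the right exponential rate. The lower bound is then routine, hinging only on the exact cancellation of the exponentially small holding-window probabilities $\asymp\delta\lambda^\varepsilon_{i_{l-1}}$ against the $1/\lambda^\varepsilon_{i_{l-1}}$ from the embedded chain.
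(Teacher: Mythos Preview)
Your proof is correct and follows essentially the same computational path as the paper: both exploit that Skorokhod-proximity in a finite discrete state space forces identical jump structure (your fact~(ii) is the content of the paper's Lemma~\ref{lem:sko}), and both bound the probability of landing in a small Skorokhod ball by an explicit product over jumps, obtaining the limit $-I_T(\varphi)$.

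The packaging differs slightly. The paper computes, for every basis ball $B_{\text{Sko}}(\varphi,\delta)$, the exact limit $\lim_{\varepsilon\to0}\varepsilon\log\mathbb{P}_i(X^\varepsilon\in B_{\text{Sko}}(\varphi,\delta))=-I_T(\varphi)$ and then invokes the general criterion \cite[Thm.\,4.1.11]{dembo-zeitouni} to conclude the weak LDP; you instead prove the lower bound on open sets and the upper bound on compacts directly, via the finite-cover argument. Your route is more self-contained; the paper's is more economical. One minor point: your closing remark on lower semicontinuity via merging/collapsing of jumps and assumption~\eqref{hyp:jumpMet} is unnecessary here --- by your own fact~(ii), $I_T$ is locally constant on $\mathbb{D}([0,T],E)$ (any sequence converging to $\varphi$ eventually has the identical jump pattern), so lower semicontinuity is immediate and does not require~\eqref{hyp:jumpMet}. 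The paper likewise does not use~\eqref{hyp:jumpMet} for the weak LDP; it enters only later, in the proof of Theorem~\ref{thm:varad-discrete}.
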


We shall also make use of the notation
\[
I_T(\varphi)=\sum_{0<s\leq T}\mathfrak{T}(\varphi_{s-},\varphi_s)
\]
with the implicit convention that $\mathfrak{T}(i,i)=0$ for all $i\in E$.

Before proving this result, we focus on the following lemma which provides a convenient topological basis of the space
$\mathbb{D}([0,T],E)$ equipped with the Skorohod topology.
\begin{lem}
	\label{lem:sko}
        For all $\varphi\in \mathbb{D}([0,T],E)$ and $\delta<1$ define
	 \[
	 B_{\text{Sko}}(\varphi,\delta)=\{\psi\in\mathbb{D}([0,T],E)\mid N_{\varphi}=N_{\psi},\ |t^{\varphi}_{i}-t^{\psi}_{i}|<\delta,\ \varphi_0=\psi_0, \text{ and } \varphi_{t_{i}^{\varphi}}=\psi_{t_{i}^{\psi}}. \}
	 \]
	Then,
	the set
	\[
	\left\{B_{\text{Sko}}(\varphi,\ee)\mid \ee\in[0,1),\ \varphi\in\mathbb{D}([0,T],E) \right\}
	\]
	is a topological basis of $\mathbb{D}([0,T],E)$.
\end{lem}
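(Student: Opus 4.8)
The goal is to show that the family of sets $B_{\mathrm{Sko}}(\varphi,\e)$, for $\e\in[0,1)$ and $\varphi\in\mathbb D([0,T],E)$, forms a topological basis for the Skorohod topology on $\mathbb D([0,T],E)$. Since a finite state space $E$ is discrete, a càdlàg path $\varphi$ is a step function, entirely determined by its initial value $\varphi_0$, its (finite) sequence of jump times $0<t^\varphi_1<\dots<t^\varphi_{N_\varphi}\le T$, and the values taken after each jump. The plan is to verify the two defining properties of a topological basis: (i) every point lies in some basis element, and (ii) given two basis elements and a point in their intersection, there is a basis element inside the intersection containing that point; and then to check that the topology so generated is exactly the Skorohod topology. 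Property (i) is immediate since $\varphi\in B_{\mathrm{Sko}}(\varphi,\e)$ for any $\e>0$. For (ii), if $\psi\in B_{\mathrm{Sko}}(\varphi_1,\e_1)\cap B_{\mathrm{Sko}}(\varphi_2,\e_2)$, then $\psi$ has the same number of jumps as each $\varphi_i$, the same initial value, and the same post-jump values, so in fact $\varphi_1$ and $\varphi_2$ have the same jump count, initial value and post-jump values as $\psi$; one then picks $\e\le\min_i\min_\ell\bigl(\e_i-|t^{\varphi_i}_\ell-t^{\psi}_\ell|\bigr)$ and also small enough that $B_{\mathrm{Sko}}(\psi,\e)$ does not merge or reorder jump times, and checks directly from the definition that $B_{\mathrm{Sko}}(\psi,\e)\subset B_{\mathrm{Sko}}(\varphi_1,\e_1)\cap B_{\mathrm{Sko}}(\varphi_2,\e_2)$.

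The substantive part is matching the generated topology with the Skorohod topology. Recall that $\psi\to\varphi$ in the Skorohod sense means there exist increasing homeomorphisms $\lambda_n$ of $[0,T]$ with $\sup_t|\lambda_n(t)-t|\to0$ and $\sup_t|\psi_n(\lambda_n(t))-\varphi(t)|\to0$. On a discrete space, $\sup_t|\psi_n(\lambda_n(t))-\varphi(t)|\to0$ forces, for $n$ large, $\psi_n\circ\lambda_n=\varphi$ exactly, hence $N_{\psi_n}=N_\varphi$, $\psi_n(0)=\varphi(0)$, the post-jump values agree, and $t^{\psi_n}_\ell=\lambda_n^{-1}(t^\varphi_\ell)\to t^\varphi_\ell$. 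Conversely, if $\psi$ agrees with $\varphi$ in jump count, initial value and post-jump values and has jump times within $\delta$, one builds an explicit piecewise-linear time change $\lambda$ sending $t^\varphi_\ell$ to $t^\psi_\ell$ (and fixing $0$ and $T$); then $\|\lambda-\mathrm{id}\|_\infty\le\delta$ and $\psi\circ\lambda=\varphi$, so $\psi\in B_{\mathrm{Sko}}(\varphi,\delta)$ iff $\psi$ is $\delta$-close to $\varphi$ in this sense. This shows each $B_{\mathrm{Sko}}(\varphi,\delta)$ is a Skorohod-neighborhood of $\varphi$ and that every Skorohod-neighborhood of $\varphi$ contains some $B_{\mathrm{Sko}}(\varphi,\delta)$, which is precisely the statement that the $B_{\mathrm{Sko}}$'s form a basis for the Skorohod topology.

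The main obstacle, and the only place requiring genuine care, is the bookkeeping around jump times that are close to each other or close to the endpoints $0$ and $T$: one must choose $\delta$ (and the radii in step (ii)) small enough that the perturbed jump times stay strictly ordered, stay in $(0,T]$, and in particular that no jump of $\varphi$ is "lost" or "created" — which is exactly why the definition of $B_{\mathrm{Sko}}$ rigidly fixes $N_\varphi$, $\varphi_0$ and the post-jump values rather than allowing small value perturbations. Once one fixes, for a given $\varphi$, the quantity $\delta_0=\tfrac12\min\bigl(t^\varphi_1,\,T-t^\varphi_{N_\varphi},\,\min_\ell(t^\varphi_{\ell+1}-t^\varphi_\ell)\bigr)$ and works with $\delta<\delta_0$, the explicit time-change construction goes through without difficulty, and the remaining verifications are routine.
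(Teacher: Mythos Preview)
Your proposal is correct and follows the same approach as the paper: exploit the discreteness of $E$ (so that $\sup_t d(\varphi_t,\psi(\lambda_t))<1$ forces $\varphi=\psi\circ\lambda$ exactly) to identify $B_{\text{Sko}}(\varphi,\delta)$ with the open $\delta$-ball for the Skorohod metric $d_S$. You are more explicit than the paper---which writes out only the inclusion $\{d_S(\varphi,\cdot)<\delta\}\subset B_{\text{Sko}}(\varphi,\delta)$ and leaves the piecewise-linear time-change construction implicit---and your separate verification of the abstract basis axioms (i)--(ii) is harmless but redundant once the identification with metric balls is established.
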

\begin{proof}
	To prove the result it is enough to show that, for all $\delta<1$ and $\varphi\in\mathbb{D}([0,T],E)$ the set
        $B_{\text{Sko}}(\varphi,\delta)$ is exactly the $\delta$ neighborhood of $\varphi$ for a particular metric inducing the Skorokhod topology.
	We recall (see e.g.\ \cite{Billingsley}) that the Skorokhod topology can be defined through the metric $d_{S}$ given by
	\[
	d_{S}(\varphi,\psi)=\inf_{\lambda\in \Lambda}\left\{ \max\left(\|\lambda-I \|_{L^\infty([0,T])},\sup_{t\in [0,T]} d(\varphi_t,\psi \circ \lambda_t)\right) \right\}
	\]
	where $\Lambda$ is the set of continuous increasing functions on $[0,T]$ with $\lambda_0=0$ and $\lambda_T=T$ and the
        distance $d$ on $E$ is defined as $d(i,j)=\mathbbm{1}_{i\neq j}$.
	Let $\psi$ and $\varphi$ be such that $d_{S}(\varphi,\psi)<\delta$ for some $\delta<1$, then there exists $\lambda$ in $\Lambda$ such that
	\[
	\left\{
	\begin{array}{l}
	|\lambda_{s}-s|<\delta, \quad \forall s\in[0,T],\\
	d(\varphi_{s},\psi(\lambda_{s}))<\delta,\quad \forall s\in[0,T].
	\end{array}
	\right.
	\]
	Since $\inf_{i,j\in E,\ i\neq j}d(i,j)=1$, we have
	\[
	\varphi_{t}=\psi(\lambda_{t}),\quad \forall t\in[0,T].
	\]
Hence, it follows that $N_{\varphi}$ equals $N_{\psi}$ and $t^\varphi_{i}=\lambda(t^{\psi}_{i})$, for all $i$. Consequently, $|t^{\varphi}_{i}-t^{\psi}_{i}|<\delta$, for all $i$, and $\varphi_{t^\varphi_{i}}=\psi_{t^\psi_{i}}$.
\end{proof}
We can now prove the weak large deviations principle for $X^\ee$.
\begin{proof}[Proof of Proposition \ref{prop:LDP}]
  Let $(S_{i})_{i\geq0}$ be the discrete time Markov chain associated to $X^{\ee}$ and let $T_i$ be the $i$-th inter-jump time, i.e.
  $T_i=J_{i+1}-J_i$ for all $i\geq 0$, where $J_i$ is the $i$-th jump time of $X^{\varepsilon}$, for $i\geq 1$ and $J_0=0$. We recall that the Markov
  chain $(T_i,S_i)_{i\geq 0}$ has
  transition kernel given by
\[
P((t,i),ds,dj)=\exp\left(-\frac{\mathfrak{T}(i,j)}{\ee}\right)\exp(-c^{\ee}(i)s)\ ds\otimes C(dj),\quad \forall(t,i)\in [0,T]\times E,
\] 
where
\[
c^\varepsilon(i)=\sum_{j\in E,\ j\neq i}\exp\left(-\frac{\mathfrak{T}(i,j)}{\varepsilon}\right)
\]
and $C(dj)$ is the counting measure on $E$
and initial value $(T_{0},S_{0})=(0,i)$ $\mathbb{P}_{i}$-almost surely.

\medskip

Let $\varphi$ be an element of $\mathbb{D}([0,T],E)$ such that $\varphi(0)=i$ and $\delta<1$. We set $t^\varphi_{0}=0$. According to
Lemma \ref{lem:sko},
\begin{multline*}
\mathbb{P}_{i}(X^{\ee}\in
B_{\text{Sko}}(\varphi,\delta))\\\leq\mathbb{P}_{i}\left(\bigcap_{\ell=0}^{N_{\varphi}-1}\left\{T_{\ell}\in[t^\varphi_{\ell+1}-t^\varphi_{\ell}-2\delta,t^\varphi_{\ell+1}-t^\varphi_{\ell}+2\delta],\
    S_{\ell}=\varphi_{t^{\varphi}_{\ell}}
  \right\} \bigcap \left\{T_{N_{\varphi}}\geq T-t_{N_{\varphi}}-\delta,\
    S_{N_\varphi}=\varphi_{t^{\varphi}_{N_\varphi}} \right\}\right).
\end{multline*}
Hence, 
\begin{align*}
  \mathbb{P}_{i}(X^{\ee}\in B_{\text{Sko}}(\varphi,\delta))\leq&
  \left(\prod_{\ell=0}^{N_{\varphi}-1} \int_{(t^\varphi_{\ell+1}-t^\varphi_{\ell}-2\delta)\vee 0}^{t^\varphi_{\ell+1}-t^\varphi_{\ell}+2\delta} 
    \exp\left(-\frac{\mathfrak{T}\left(\varphi_{t^\varphi_{\ell}},\varphi_{t^\varphi_{\ell+1}}\right)}{\ee}\right) 
    \exp\left(-c^{\ee}(\varphi_{t^\varphi_{\ell}}) s_{\ell}\right)\ ds_{\ell} \right)
  \\&\quad\times\int_{(T-t^\varphi_{N_\varphi}-\delta)\vee 0}^{\infty}c^{\ee}\left(\varphi_{t^\varphi_{N_\varphi}}\right)\exp\left(-c^{\ee}\left(\varphi_{t^\varphi_{N_\varphi}}\right)s\right)ds
  \\=&\prod_{\ell=0}^{N_{\varphi}-1}\exp\left(-\frac{\mathfrak{T}\left(\varphi_{t^\varphi_{\ell}},\varphi_{t^\varphi_{\ell+1}}\right)}{\ee}\right)e^{-c^{\ee}\left(\varphi_{t^\varphi_{\ell}}\right)\left(t^\varphi_{\ell+1}-t^\varphi_{\ell}
    -2\delta\right)\vee 0}\ \frac{\left(1-e^{-4\delta c^{\ee}\left(\varphi_{t^\varphi_{\ell}}\right)} \right)}{c^{\ee}\left(\varphi_{t^\varphi_{\ell}}\right)}\\
&\quad \times e^{-c^{\ee}\left(\varphi_{t^\varphi_{N_{\varphi}}}\right)\left(T-t^\varphi_{N_{\varphi}}-\delta\right)\vee 0}.
\end{align*}
Now, using the facts that $c^{\ee}(i)\leq e^{-\frac{c}{\ee}}$ with $c>0$ and $(1-\exp(-4\delta
c^\varepsilon(\varphi_{t_\ell^\varphi})))/c^\varepsilon(\varphi_{t_\ell^\varphi})\rightarrow 4\delta$ when $\varepsilon\rightarrow
0$, we get 
\begin{align*}
\limsup_{\ee\rightarrow 0}\ee\log\mathbb{P}_{i}(X^{\ee}\in B_{\text{Sko}}(\varphi,\delta))\leq-\sum_{\ell=0}^{N_{\varphi}-1}\mathfrak{T}\left(\varphi_{t^\varphi_{\ell}},\varphi_{t^\varphi_{\ell+1}}\right).
\end{align*}
Similarly, for $\delta>0$ small enough, using the bound
\begin{equation*}
  \mathbb{P}_{i}(X^{\ee}\in
  B_{\text{Sko}}(\varphi,\delta)) \geq
  \mathbb{P}_{i}\left(\bigcap_{\ell=0}^{N_{\varphi}}\left\{T_{\ell}\in\left(t^\varphi_{\ell+1}-t^\varphi_{\ell}-\frac{\delta}{N_\varphi}\,
        ,\,t^\varphi_{\ell+1}-t^\varphi_{\ell}\right),\ S_{\ell}=\varphi_{t^{\varphi}_{\ell}}
    \right\} 
  \right),
\end{equation*}
we obtain
\begin{align*}
\liminf_{\ee\rightarrow 0}\ee\log\mathbb{P}_{i}(X^{\ee}\in B_{\text{Sko}}(\varphi,\delta))\geq-\sum_{\ell=0}^{N_{\varphi}-1}\mathfrak{T}\left(\varphi_{t^\varphi_{\ell}},\varphi_{t^\varphi_{\ell+1}}\right).
\end{align*}
Hence 
\[
\lim_{\varepsilon\rightarrow 0}\ee\log\mathbb{P}_{i}(X^{\ee}\in
B_{\text{Sko}}(\varphi,\delta))=-\sum_{\ell=0}^{N_{\varphi}-1}\mathfrak{T}\left(\varphi_{t^\varphi_{\ell}},\varphi_{t^\varphi_{\ell+1}}\right).
\]
This classically entails (see e.g.~\cite[Thm.\,4.1.11]{dembo-zeitouni}) that $X^\varepsilon$ satisfies a weak large deviations
principle with rate function $I_t$.
\end{proof}
Usually, a full large deviations principle is deduced from a weak one using exponential tightness of the laws of $X^\varepsilon$.
However, in our case, exponential tightness does not hold. This is due to the fact that the function $I_T$ is not a good rate
function, as can be seen from the following example: let $i$ and $j$ be two elements of $E$ and $s$ a real number in $(0,T)$. Now,
define for any positive integer $n$ large enough,
\[
\varphi_{n}(u)=
\left\{
  \begin{array}{ll}
    i& \text{ if } u\in[0,s)\\
    j& \text{ if } u\in[s,s+\frac{1}{n})\\
    i& \text{ if } u\in[s+\frac{1}{n},T].\\
  \end{array}
\right.
\]
Then, the subset $\left\{\varphi_{n}\mid n\in\mathbb{N}\backslash\{0\} \right\}$ is clearly non compact in
$\mathbb{D}([0,T],E)$ while $I_T$ is bounded on this set. 

To prove the full large deviations principle, we need the following lemma.

\begin{lem}
  \label{lem:sauts-bornes}
  For all $N\geq 1$ and $t>0$, we denote by $N^\varepsilon_t$ the number of jumps of $X^\varepsilon$ before $t$. There exists
  a constant $C_{N,t}\geq 0$ such that, for all $i\in E$,
  $$
  \limsup_{\varepsilon\rightarrow 0}\varepsilon\log\mathbb{P}_i(N^\varepsilon_t\geq N)\leq-C_{N,t},
  $$
  and for all $t>0$,
  $$
  \lim_{N\rightarrow\infty}C_{N,t}=+\infty.
  $$
\end{lem}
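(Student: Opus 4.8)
The plan is to control the number of jumps by brute force: all jump rates of $X^\varepsilon$ are of order $e^{-\gamma/\varepsilon}$ with $\gamma=\inf\{\mathfrak{T}(i,j):i\neq j\in E\}>0$, so the jump count up to time $t$ is stochastically dominated by a Poisson variable whose mean is exponentially small in $1/\varepsilon$, and a crude Poisson tail bound then gives the estimate with $C_{N,t}$ proportional to $N$.

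Concretely, I would first note that from any state $i$ the total jump rate is $c^\varepsilon(i)=\sum_{j\neq i}e^{-\mathfrak{T}(i,j)/\varepsilon}\leq(|E|-1)\,e^{-\gamma/\varepsilon}=:\Lambda_\varepsilon$ (if $\gamma=+\infty$ there are no jumps at all and the statement is trivial, so one may assume $\gamma<\infty$). By the standard thinning construction of a pure-jump Markov process, $X^\varepsilon$ can be built from a rate-$\Lambda_\varepsilon$ Poisson process by retaining each point with probability $c^\varepsilon(\cdot)/\Lambda_\varepsilon\leq1$ and choosing the new state accordingly; hence $N^\varepsilon_t$ is stochastically dominated by a $\mathrm{Poisson}(\Lambda_\varepsilon t)$ random variable (equivalently, each inter-jump time of $X^\varepsilon$ dominates an $\mathrm{Exp}(\Lambda_\varepsilon)$ law, so the $N$-th jump time dominates a $\mathrm{Gamma}(N,\Lambda_\varepsilon)$ law). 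Then, using the elementary inequality $\mathbb{P}(\mathrm{Poisson}(\lambda)\geq N)\leq\lambda^N/N!$ (which follows from $\frac{\lambda^{N+m}}{(N+m)!}\leq\frac{\lambda^N}{N!}\,\frac{\lambda^m}{m!}$ summed over $m\geq0$), one obtains
\[
\mathbb{P}_i(N^\varepsilon_t\geq N)\;\leq\;\frac{(\Lambda_\varepsilon t)^N}{N!}\;=\;\frac{\big((|E|-1)\,t\big)^N}{N!}\,e^{-N\gamma/\varepsilon}.
\]
Taking $\varepsilon\log$ of both sides and letting $\varepsilon\to0$, the terms $N\varepsilon\log\big((|E|-1)t\big)$ and $-\varepsilon\log N!$ vanish, so $\limsup_{\varepsilon\to0}\varepsilon\log\mathbb{P}_i(N^\varepsilon_t\geq N)\leq-N\gamma=:-C_{N,t}$; and since $\gamma>0$ is a fixed constant, $C_{N,t}=N\gamma\to+\infty$ as $N\to\infty$, which gives the second assertion.

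I do not expect any genuine obstacle here, the argument being short and self-contained. The only step worth stating carefully is the stochastic domination of $N^\varepsilon_t$ by a Poisson variable, which I would justify through the thinning coupling above (or, to avoid couplings, through the $\mathrm{Gamma}(N,\Lambda_\varepsilon)$ comparison for the $N$-th jump time together with the usual identity relating the Gamma and Poisson distribution functions). One may also note in passing that the constant produced, $C_{N,t}=N\gamma$, does not in fact depend on $t$, which is slightly stronger than what is claimed.
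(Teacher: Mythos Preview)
Your proof is correct and reaches the same constant $C_{N,t}=N\gamma$ (with $\gamma=\inf_{i\neq j}\mathfrak{T}(i,j)$) as the paper. The paper's argument is slightly different in packaging: rather than invoking a Poisson coupling, it decomposes the event $\{N^\varepsilon_t\geq N\}$ according to the sequence of states $x_0,\ldots,x_N$ visited at the first $N$ jumps, computes the resulting iterated time integral explicitly as $\frac{t^N}{N!}\exp\bigl(-\sum_{\ell}\mathfrak{T}(x_\ell,x_{\ell+1})/\varepsilon\bigr)$, and then bounds each $\mathfrak{T}(x_\ell,x_{\ell+1})$ below by $\gamma$ before summing over the finitely many state sequences. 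Your stochastic-domination argument is a cleaner and shorter route to the same inequality, since the thinning construction absorbs both the sum over state sequences and the time integral into the single Poisson tail bound $\mathbb{P}(\mathrm{Poisson}(\Lambda_\varepsilon t)\geq N)\leq(\Lambda_\varepsilon t)^N/N!$; the paper's explicit decomposition, on the other hand, is reused verbatim in the proof of Theorem~\ref{thm:upLDP}, which may explain the choice.
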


\begin{proof}
  Let us fix $x_0,x_1,\ldots,x_N\in E$ such that $\mathfrak{T}(x_i,x_{i+1})>0$ for all $i$. We compute
  \begin{align*}
    & \mathbb{P}_{x_0}(N^\varepsilon_t\geq N,\ X^\varepsilon_{J_i}=x_i,\ \forall i\in\{0,\ldots, N\}) \\ & 
    =\int_0^{t}\exp\left(-\frac{\mathfrak{T}(x_0,x_1)}{\ee}\right) 
    \exp(-c^{\ee}(x_0) s_{0})\ ds_0
    \ldots \\ & \qquad\qquad\ldots\int_{0}^{t-s_0-\ldots-s_{N-2}}\exp\left(-\frac{\mathfrak{T}(x_{N-1},x_N)}{\ee}\right) \exp(-c^{\ee}(x_{N-1}) s_{N-1})\
    ds_{N-1} \\
    & \leq \exp\left(-\frac{\mathfrak{T}(x_0,x_1)+\ldots+\mathfrak{T}(x_{N-1},x_N)}{\ee}\right) \int_0^{t}
    ds_0\ldots\int_{0}^{t-s_0-\ldots-s_{N-2}} ds_{N-1} \\
    & =\frac{t^N}{N!}\ \exp\left(-\frac{\mathfrak{T}(x_0,x_1)+\ldots+\mathfrak{T}(x_{N-1},x_N)}{\ee}\right).
  \end{align*}
  Therefore,
  \begin{multline*}
    \limsup_{\varepsilon\rightarrow 0}\varepsilon\log\mathbb{P}_{x_0}( N^\varepsilon_t\geq N,\ X^\varepsilon_{J_i}=x_i,\ \forall i\in\{0,\ldots, N\})
    \\ \leq -\mathfrak{T}(x_0,x_1)-\ldots-\mathfrak{T}(x_{N-1},x_N)\leq - N\inf_{i,j\in E}\mathfrak{T}(i,j)
  \end{multline*}
  Since the number of choices of $x_0,x_1,\ldots,x_N\in E$ is finite, we have proved Lemma~\ref{lem:sauts-bornes}.
\end{proof}

%

We can now prove that $X^{\ee}$ satisfies (a strong version of) the full LDP.
\begin{thm}
	\label{thm:upLDP}
For any measurable set $F$ of $\mathbb{D}([0,t],E)$,
\[
\limsup_{\ee\to 0}\ee\log\mathbb{P}_{i}(X^{\ee}\in F)\leq -\inf_{\varphi\in F}I_{t}(\varphi)\leq -\inf_{\varphi\in \bar{F}}I_{t}(\varphi).
\]
In particular, $X^{\ee}$ satisfies the large deviation principle with rate $\varepsilon^{-1}$ and rate function $I_{t}$.
\end{thm}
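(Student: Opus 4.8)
The plan is to bypass the usual route (weak LDP plus exponential tightness), which fails here since $I_t$ is not a good rate function, and instead to estimate $\mathbb{P}_i(X^\varepsilon\in F)$ directly, exploiting the rigid structure of $\mathbb{D}([0,t],E)$ for finite $E$: every path in this space is piecewise constant with finitely many jumps, and
\[
I_t(\varphi)=\sum_{\ell=1}^{N_\varphi}\mathfrak{T}\left(\varphi_{t^\varphi_\ell-},\varphi_{t^\varphi_\ell}\right)
\]
depends only on $\varphi_0$ and the \emph{ordered sequence of visited states}, not on the jump times. This is the observation that makes the argument work.

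First I would fix $i\in E$, a measurable set $F\subseteq\mathbb{D}([0,t],E)$ and an integer $N\geq 1$, and split
\[
\mathbb{P}_i(X^\varepsilon\in F)\leq \mathbb{P}_i(X^\varepsilon\in F,\ N^\varepsilon_t<N)+\mathbb{P}_i(N^\varepsilon_t\geq N).
\]
The second term is handled by Lemma~\ref{lem:sauts-bornes}: $\limsup_{\varepsilon\to 0}\varepsilon\log\mathbb{P}_i(N^\varepsilon_t\geq N)\leq -C_{N,t}$ with $C_{N,t}\to+\infty$ as $N\to\infty$. For the first term I would use the jump-chain description $(T_\ell,S_\ell)_{\ell\geq 0}$ from the proof of Proposition~\ref{prop:LDP}: decomposing over the number $n<N$ of jumps and over the finitely many admissible sequences $i=x_0,x_1,\dots,x_n$ (with $x_\ell\neq x_{\ell-1}$), the joint density of the jump times $(t_1<\dots<t_n)$ on the event $\{S_\ell=x_\ell\ \forall\ell,\ \text{no jump in }(t_n,t]\}$ equals, with $t_0=0$,
\[
\prod_{\ell=1}^{n}e^{-\mathfrak{T}(x_{\ell-1},x_\ell)/\varepsilon}\,e^{-c^\varepsilon(x_{\ell-1})(t_\ell-t_{\ell-1})}\cdot e^{-c^\varepsilon(x_n)(t-t_n)}\ \leq\ \exp\left(-\frac1\varepsilon\sum_{\ell=1}^n\mathfrak{T}(x_{\ell-1},x_\ell)\right),
\]
since every survival factor is at most $1$. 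Integrating this bound over the measurable set of jump-time configurations in $[0,t]^n$ for which the corresponding path lies in $F$, the Lebesgue measure is at most $t^n$, and $\sum_{\ell}\mathfrak{T}(x_{\ell-1},x_\ell)$ is exactly $I_t$ of that path; summing over the finitely many pairs $\bigl(n,(x_\ell)\bigr)$ produces an $\varepsilon$-independent constant $C_N$ with
\[
\mathbb{P}_i(X^\varepsilon\in F,\ N^\varepsilon_t<N)\leq C_N\exp\left(-\frac1\varepsilon\inf\{I_t(\varphi):\varphi\in F,\ \varphi_0=i,\ N_\varphi<N\}\right)\leq C_N\exp\left(-\varepsilon^{-1}\inf_{\varphi\in F}I_t(\varphi)\right).
\]
The convention $\mathfrak{T}(i,j)=+\infty$ is harmless here: such a transition has rate $0$ and the corresponding term simply vanishes, consistently with $I_t(\varphi)=+\infty$ for paths using it.

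Combining the two estimates gives $\limsup_{\varepsilon\to 0}\varepsilon\log\mathbb{P}_i(X^\varepsilon\in F)\leq\max\{-\inf_{\varphi\in F}I_t(\varphi),\,-C_{N,t}\}$, and letting $N\to\infty$ yields $\limsup_{\varepsilon\to 0}\varepsilon\log\mathbb{P}_i(X^\varepsilon\in F)\leq -\inf_{\varphi\in F}I_t(\varphi)$; the inequality $-\inf_{\varphi\in F}I_t(\varphi)\leq-\inf_{\varphi\in\bar F}I_t(\varphi)$ is immediate from $F\subseteq\bar F$. For the final ``in particular'' assertion, the lower bound for open sets is already part of the weak LDP of Proposition~\ref{prop:LDP}, and the upper bound just obtained applies in particular to closed sets, so $X^\varepsilon$ satisfies the full large deviations principle with rate $\varepsilon^{-1}$ and rate function $I_t$. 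The only delicate point is the first-term estimate: one must notice that, because $I_t$ ignores the jump times, an infimum of $I_t$ over an arbitrary measurable set of jump-time configurations collapses to a finite minimum over jump signatures, which is precisely what upgrades the conclusion from closed sets (as a soft closure argument would give) to \emph{all} measurable sets; everything else is bookkeeping on the jump-chain density.
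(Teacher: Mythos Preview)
Your proposal is correct and follows essentially the same route as the paper's proof: both split according to whether the number of jumps is below a threshold $N$, handle the tail via Lemma~\ref{lem:sauts-bornes}, and on the finite-jump part decompose over the finitely many jump signatures $(x_0,\ldots,x_n)$, using that $I_t$ depends only on the signature and not on the jump times to reduce the infimum over $F$ to a finite minimum over signatures meeting $F$. The only cosmetic difference is that you state a non-asymptotic bound with an explicit polynomial prefactor $C_N$, whereas the paper writes the $\limsup$ directly; the content is the same.
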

\begin{proof}
 Let $F\subset\mathbb{D}([0,t],E)$ be measurable. Set, for any positive integer $n$,
\[
F^{+}_{n}=\{\varphi\in F \mid N_{\varphi}\geq n \} \text{ and }F^{-}_{n}=F\backslash F^{+}_{n}.
\]
We have
\[
\mathbb{P}_{i}\left(X^{\ee}\in F^{-}_{n} \right)=\sum_{\ell=0}^{n-1}\sum_{(x_{1},\dots,x_{\ell})\in
  E^{\ell}}\mathbb{P}_{i}\left(\left\{X^{\ee}_{J_{k}}=x_{k},\ \forall k\leq N_{t}^{\ee},\ N_{t}^{\ee}=\ell\right\}\cap F \right).
\]
According to the computations made in the proof of Lemma \ref{lem:sauts-bornes}, we have
\[
\limsup_{\ee\to 0}\ee\log\mathbb{P}_{i}\left(X^{\ee}_{J_{k}}=x_{k},\ \forall k\leq N_{t}^{\ee},\ N_{t}^{\ee}=\ell \right)\leq -\left(\mathfrak{T}(i,x_{1})+\sum_{k=1}^{\ell-1}\mathfrak{T}(x_{k},x_{k+1})\right),
\]
which leads to
\[
\limsup_{\ee\to 0}\ee\log \mathbb{P}_{i}\left(X^{\ee}\in F^{-}_{n} \right) \leq \max\left\{ -\left(\mathfrak{T}(i,x_{1})+\sum_{k=1}^{\ell-1}\mathfrak{T}(x_{k},x_{k+1})\right) \right\},
\]
where the maximum is taken with respect to $\ell\in\{0,\ldots,n-1\}$ and to the elements $(x_{1},\dots,x_{\ell})$ of
$\cup_{\ell=0}^{n}E^{\ell}$ such that
\[
\left\{X^{\ee}_{J_{k}}=x_{k},\ \forall k\leq N_{t}^{\ee},\ N_{t}^{\ee}=\ell\right\}\cap F\neq\emptyset.
\]
This implies that
\[
\limsup_{\ee\to 0}\ee\log \mathbb{P}_{i}\left(X^{\ee}\in F^{-}_{n} \right) \leq -\inf_{\varphi\in F } I_{t}(\varphi).
\]
Finally, using Lemma \ref{lem:sauts-bornes}, we obtain
\[
\limsup_{\ee\to 0}\ee\log \mathbb{P}_{i}\left(X^{\ee}\in F \right)\leq \max\left(-\inf_{\varphi\in F} I_{t}(\varphi),-C_{n,t} \right).
\]
The result is now obtained by sending $n$ to infinity.
\end{proof}
The next theorem corresponds to Theorem \ref{thm:varad} in the discrete case situation. Unfortunately, as seen above, $I_T$ is not a
good rate function. This prevents us from applying directly Varadhan's lemma and leads to substantial difficulties. This is the place
where we need the assumption~(\ref{hyp:jumpMet}) on $\mathfrak{T}$. This result makes use of the sequence $(\varepsilon_k)_{k\geq 1}$
constructed in Lemma~\ref{lem:unifContinuity}, which holds true without modification in our discrete case. To avoid heavy notations,
we shall write ${\cal M}_s(i)$ for ${\cal M}_s(\{i\})$, where ${\cal M}_s$ is the measure constructed in
Lemma~\ref{lem:unifContinuity}.

\begin{thm}
	\label{thm:varad-discrete}
	For all $(t,i)$ in $(0,+\infty)\times E$,
	\begin{align}
	V(t,i) & :=\lim_{k\rightarrow\infty}\ee_k\log u^{\ee_k}(t,i) \notag \\ & =\sup_{\varphi\in\mathbb{D}([0,t],E)\text{ s.t.\
          }\varphi_0=i}\left\{-h(\varphi_t)+\int_0^t\sum_{j\in E}R(\varphi_s,j){\cal M}_{t-s}(j)ds-\sum_{0<s\leq
            t}\mathfrak{T}\left(\varphi_{s-},\varphi_s\right)\right\}. \label{eq:V-discrete}
	\end{align}
\end{thm}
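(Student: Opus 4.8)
The plan is to follow the architecture of the proof of Theorem~\ref{thm:varad}, starting from the integral representation established just before Proposition~\ref{prop:LDP}, rewritten as $u^{\ee_k}(t,i)=\EE_i\!\left[\exp\!\big(\ee_k^{-1}\Psi_k(X^{\ee_k})\big)\right]$ with $\Psi_k(\psi):=-h^{\ee_k}(\psi_t)+\int_0^t R(\psi_s,v^{\ee_k}_{t-s})\,ds$ for $\psi\in\mathbb{D}([0,t],E)$. The discrete analogue of Lemma~\ref{lem:unifContinuity} holds with the same proof, so along $(\ee_k)$ the functionals $\Psi_k$ converge to $\Psi(\psi):=-h(\psi_t)+\int_0^t\sum_{j\in E}R(\psi_s,j)\,\mathcal{M}_{t-s}(j)\,ds$; the convergence is in fact uniform on any set of paths with a uniformly bounded number of jumps (such paths are piecewise constant with a bounded number of pieces, and for fixed $i$ the primitives $u\mapsto\int_0^u R(i,v^{\ee_k}_r)\,dr$ are equi-Lipschitz and converge pointwise by weak convergence of the occupation measures, hence uniformly; and $h^{\ee_k}\to h$ uniformly on the finite set $E$). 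Moreover $\sup_k\|\Psi_k\|_\infty\le\|h\|_\infty+tM+o(1)$. Write $a$ for the right-hand side of~\eqref{eq:V-discrete}; by Lemma~\ref{lem:upBound} and the fact that $h\ge0$ one has $-\|h\|_\infty-tM\le a\le tM$, so $a$ is finite, and the goal is $\lim_k\ee_k\log u^{\ee_k}(t,i)=a$.

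\textbf{Lower bound.} Fix $\varphi\in\mathbb{D}([0,t],E)$ with $\varphi_0=i$ and $I_t(\varphi)<\infty$ — so $\varphi$ has finitely many jumps, all along pairs with finite $\mathfrak{T}$ — and, after a harmless perturbation, with no jump at $t$. For $\delta$ small, $B_{\text{Sko}}(\varphi,\delta)$ is an open neighbourhood of $\varphi$ (Lemma~\ref{lem:sko}) on which $I_t$ is constant, equal to $I_t(\varphi)$, and on which $\Psi_k\ge\Psi(\varphi)-\omega_\varphi(\delta)-o(1)$ for some modulus $\omega_\varphi(\delta)\to0$ (only the finitely many short intervals where paths in the ball disagree with $\varphi$ contribute, and there $R$ is bounded by $M$). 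Keeping only the event $\{X^{\ee_k}\in B_{\text{Sko}}(\varphi,\delta)\}$ in the expectation and using the large deviations lower bound of Proposition~\ref{prop:LDP} gives $\liminf_k\ee_k\log u^{\ee_k}(t,i)\ge\Psi(\varphi)-I_t(\varphi)-\omega_\varphi(\delta)$. Letting $\delta\to0$ and taking the supremum over all such $\varphi$ yields $\liminf_k\ee_k\log u^{\ee_k}(t,i)\ge a$, since paths with $I_t=+\infty$ do not contribute to the supremum.

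\textbf{Upper bound.} This is where the fact that $I_t$ is not a good rate function forces extra work. Split $u^{\ee_k}(t,i)$ into the contributions of $\{I_t(X^{\ee_k})\ge L\}$ and $\{I_t(X^{\ee_k})<L\}$. For the first, bound $\Psi_k$ by $\|h\|_\infty+tM+o(1)$ and apply the \emph{strong} upper bound of Theorem~\ref{thm:upLDP} to the closed set $\{I_t\ge L\}$, on which $I_t\ge L$: this gives $\limsup_k\ee_k\log(\cdots)\le\|h\|_\infty+tM-L$, which is $<a$ once $L$ is large. The second contribution involves only paths starting at $i$ with at most $L/\gamma$ jumps; this family is bounded in $I_t$ but not relatively compact for the Skorohod topology, the obstruction being paths with two consecutive jump times collapsing (``fast detours''). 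Here assumption~\eqref{hyp:jumpMet} enters: if a path makes a detour $x\to y\to z$ spending a time $\tau$ at $y$, then by~\eqref{eq:hypTij} its $I_t$-cost exceeds that of the path obtained by replacing this detour by the direct jump $x\to z$ by at least $\eta>0$, whereas its $\Psi$-value changes by at most $2M\tau$; since $\Psi_k$ is uniformly bounded, the contribution of the set of paths having some inter-jump gap smaller than $\tau_0$, with $\tau_0$ chosen so that $2M\tau_0<\eta$, satisfies $\limsup_k\ee_k\log(\cdots)\le a-(\eta-2M\tau_0)<a$. The remaining set — paths starting at $i$, with at most $L/\gamma$ jumps and all consecutive inter-jump gaps at least $\tau_0$ — is compact, hence covered by finitely many balls $B_{\text{Sko}}(\varphi_1,\delta),\dots,B_{\text{Sko}}(\varphi_m,\delta)$ with $\delta<\tau_0/2$; on each of them $\Psi_k\le\Psi(\varphi_j)+\omega(\delta)+o(1)$ and, by Proposition~\ref{prop:LDP}, $\limsup_k\ee_k\log\PP_i(X^{\ee_k}\in B_{\text{Sko}}(\varphi_j,\delta))\le-I_t(\varphi_j)$, so the $j$-th ball contributes $\limsup_k\ee_k\log(\cdots)\le\Psi(\varphi_j)-I_t(\varphi_j)+\omega(\delta)\le a+\omega(\delta)$. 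Summing the finitely many pieces, then sending $\delta\to0$ and $L\to\infty$, gives $\limsup_k\ee_k\log u^{\ee_k}(t,i)\le a$, which combined with the lower bound proves~\eqref{eq:V-discrete}.

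I expect the main obstacle to be precisely the upper bound for the ``low-cost'' region: unlike in Theorem~\ref{thm:varad}, the sub-level sets of $I_t$ are not compact, so the covering step of Varadhan's lemma does not apply verbatim. The key point — and the role of~\eqref{hyp:jumpMet} — is that the non-compactness is caused only by fast detours, that such detours are strictly sub-optimal, and that they can therefore be truncated at the price of an error which vanishes as the cut-off $\tau_0$ and the mesh $\delta$ are removed; checking carefully that the truncation error is uniform in $k$, via the boundedness of $\Psi_k$ and Proposition~\ref{prop:LDP} applied to balls around detour paths, is the most delicate step.
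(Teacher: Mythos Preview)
Your architecture matches the paper's: lower bound via Skorohod balls and the weak LDP, upper bound via the three-way split into the large-$I_t$ region (handled by Theorem~\ref{thm:upLDP}), the compact region $K_{N,\tau_0}$ of paths with at most $N$ jumps and all inter-jump gaps $\ge\tau_0$ (handled by a finite cover), and the ``short-gap'' region $L_{N,\tau_0}$. The first two pieces are fine and essentially identical to the paper.

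The gap is in your treatment of $L_{N,\tau_0}$. You assert $\limsup_k\ee_k\log\EE_i[e^{\Psi_k/\ee_k}\mathbbm{1}_{L_{N,\tau_0}}]\le a-(\eta-2M\tau_0)$, citing the detour-removal observation that every $\varphi\in L_{N,\tau_0}$ satisfies the \emph{pointwise} inequality $\Psi(\varphi)-I_t(\varphi)\le a-(\eta-2M\tau_0)$. But passing from a pointwise bound on $\Psi-I_t$ to the expectation bound is exactly the Varadhan upper bound, which fails on non-compact sets --- and $L_{N,\tau_0}$ is not compact, which is the whole point. The only direct estimate available is $\EE[e^{\Psi_k/\ee}\mathbbm{1}_{L_{N,\tau_0}}]\le e^{\sup\Psi_k/\ee}\PP_i(L_{N,\tau_0})$, which combined with Theorem~\ref{thm:upLDP} gives $\sup_{L_{N,\tau_0}}\Psi-\inf_{L_{N,\tau_0}}I_t$; this can be much larger than $a$. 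Your hint ``Proposition~\ref{prop:LDP} applied to balls around detour paths'' does not help: there are infinitely many such balls.

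The paper closes this gap by constructing an explicit jump-removal map $\Gamma^{\tau_0}:L_{N,\tau_0}\to K_{N,\tau_0}$ (each short plateau is overwritten by the value on the next long one) satisfying $I_t(\Gamma^{\tau_0}\varphi)\le I_t(\varphi)$ --- this is where~\eqref{hyp:jumpMet} enters --- and $|\Psi_k(\varphi)-\Psi_k(\Gamma^{\tau_0}\varphi)|\le 2N\tau_0\|R\|_\infty$. One then pulls back the finite cover $\{G_{\varphi_j}\}$ of $K_{N,\tau_0}$ to a \emph{finite} decomposition $L_{N,\tau_0}=\bigcup_j A_j$ with $A_j=(\Gamma^{\tau_0})^{-1}(G_{\varphi_j})\cap L_{N,\tau_0}$; on each $A_j$ one has $\Psi_k\le\Psi(\varphi_j)+2\delta+2N\tau_0\|R\|_\infty$, while Theorem~\ref{thm:upLDP} applied to $A_j$ gives $\limsup\ee_k\log\PP_i(A_j)\le-\inf_{A_j}I_t\le-I_t(\varphi_j)$. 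This yields the bound $\le a+2\delta+2N\tau_0\|R\|_\infty$ (weaker than what you claimed, but sufficient), and one concludes by sending $\delta,\tau_0\to0$. An alternative route that would rescue your stronger bound is to decompose $L_{N,\tau_0}$ by jump \emph{sequence} --- finitely many choices, on each of which $I_t$ is constant, so the crude estimate becomes sharp piecewise --- but you did not supply this argument either.
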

\begin{proof}
  Let us fix a positive time $t$. To avoid heavy notations, we shall write $\varepsilon$ instead of $\varepsilon_{k}$ in this proof. Since the only part of the proof
  of Varadhan's Lemma relying on the compactness of the level sets of the rate function is the upper bound, we restrict ourself to
  this bound. As in the proof of Theorem \ref{thm:varad}, let $a\in\RR$ be smaller than the right-hand side of~\eqref{eq:V-discrete}
  and $C$ satisfying $|\Phi(\varphi)-h(\varphi(t))|\leq C(t+1)$.
  We define $K$ as the (non-necessarily compact) level set
  \[
  K=\{\varphi\in\mathbb{D}([0,t],E)\mid\varphi_0=i,\ I_t(\varphi)\leq C(t+1)-a \}.
  \]
  First, as in the proof of Theorem \ref{thm:varad}, we deduce from the LDP that
  \begin{multline}
    \label{eq:estim1}
    \limsup_{\ee\to0}\ee\log\mathbb{E}\left[\exp\left(\frac{1}{\ee}\left(-h_\ee(X^{\ee}_{t})+\int_{0}^{t}R(X^{\ee}_{s},v^{\ee}_{t-s})ds
        \right)\right) \mathds{1}_{X^{\ee}\in K^{c}}\right] \\\leq C(t+1)+\limsup_{\ee\to0}\ee\log\mathbb{P}\left(X^{\ee}\in K^{c} \right)
    \leq C(t+1)-\inf_{x\in K^{c}}I(x)\leq a.
  \end{multline}
  Second, by the definition of $I_t(\varphi)$, we have
  \begin{equation}
    \label{eq:nJumps}
    N_{\varphi}\leq \frac{I_t(\varphi)}{\min_{i,j\in E}\mathfrak{T}(i,j)}.
  \end{equation}
  Hence, according to \eqref{eq:nJumps}, there exists $N$ such that for all $\varphi$ in $K$, $N_{\varphi}\leq N$. 
  Fix $\gamma>0$. We deduce that $K=K_{N,\delta}\cup L_{N,\delta}$, where 
  \[
  K_{N,\gamma}=\{\varphi\in K\mid N_{\varphi}\leq N \text{ and } \inf_{i}(t^{\varphi}_{i+1}-t^{\varphi}_{i})\geq\gamma \}
  \]
  and
  \[
  L_{N,\gamma}=\{\varphi\in K\mid N_{\varphi}\leq N \text{ and } \inf_{i}(t^{\varphi}_{i+1}-t^{\varphi}_{i})<\gamma \}.
  \]
  Let $\Delta_{\beta}(T)$ be the set of subdivisions of $[0,t]$ with mesh greater that $\beta$. Since, for all $\varphi\in K_{N,\gamma}$ and all $\beta<\gamma$, we have
  \[
  \inf_{s\in\Delta_{\beta}(T)}\max_{1\leq i\leq |s|}\sup_{u,v\in[s_{i},s_{i+1})}|\varphi_{u}-\varphi_{v}|=0,
  \]
  we deduce from Arzela-Ascoli's theorem for the Skorokhod space that $K_{N,\gamma}$ is compact. 

  Fix $\delta>0$. We deduce that there exist $n\geq 1$ and $\varphi_{1}^{\gamma},\ldots,\varphi_{n}^{\gamma}\in K_{N,\gamma}$ such that
  \[
  K_{N,\gamma}=\bigcup_{i=1}^{n}G_{\varphi^{\gamma}_{i}},
  \]
  where the neighborhood $G_{\varphi^{\gamma}_{i}}$ of $\varphi_{i}^{\gamma}$ is chosen such that, for $\ee$ small enough,
  \begin{equation}
    \label{eq:ineq1}
    -h(\varphi^{\gamma}_{i}(t))+\Phi_{\ee}(\varphi^{\gamma}_{i})+\delta\geq \sup_{\varphi\in G_{\varphi^{\gamma}_{i}}} -h_{\varepsilon}(\varphi^{\gamma}(t))+\Phi_{\ee}(\varphi).
  \end{equation}
  Because of Lemma \ref{lem:sko}, we can also assume without loss of generality that $I_{t}$ is constant on
  $G_{\varphi_{i}^{\gamma}}$ for all $i$. Moreover, for all $i$ in $\{1,\dots,n\}$ and $\varepsilon$ small enough, we have
  \begin{equation}
    \label{eq:ineq2}
    \Phi(\varphi_{i}^{\gamma})-\delta\leq \Phi_{\ee}(\varphi_{i}^{\gamma})\leq \Phi(\varphi_{i}^{\gamma})+\delta.
  \end{equation}
  Following the lines of Theorem \ref{thm:varad}, we obtain
  \begin{align}
    \limsup_{\ee\to 0}\ee\log
    \mathbb{E} & \left[\exp\left(\frac{1}{\ee}\left(-h_{\ee}(X^{\ee}_{t})+\int_{0}^{t}R(X^{\ee}_{s},v^{\ee}_{t-s})ds \right)\right)
      \mathbbm{1}_{X^{\ee}\in K_{N,\gamma}}\ \right] \notag \\
    & \leq\max_{1\leq i \leq n}\left\{-h(\varphi^{\gamma}_{i}(t))+\int_0^t\sum_{j\in E}R(\varphi^\gamma_i(s),j){\cal
        M}_{t-s}(j)ds+2\delta-I_t(\varphi^{\gamma}_{i}) \right\} \notag \\
    &\leq\max_{\varphi\in K_{N,\gamma}}\left\{-h(\varphi(t))+\int_0^t\sum_{j\in E}R(\varphi(s),j){\cal
        M}_{t-s}(j)ds-I_t(\varphi) \right\}+2\delta. \label{eq:estim3}
  \end{align}

  We now prove a similar inequality when $X^{\ee}$ lies in $L_{N,\gamma}$. We first introduce some notations : for any
  $\varphi\in\mathbb{D}([0,T],E)$, in the case where $t^\varphi_{N_\varphi}\leq t-\gamma$, we define
  \[
  \Gamma^{\gamma}\varphi(s)=
  \left\{
    \begin{array}{ll}
      \varphi(s)& \text{ if } s\in[t^{\varphi}_{i},t^{\varphi}_{i+1})\text{ for }0\leq i\leq N_\varphi\text{ and } t^{\varphi}_{i+1}-t^{\varphi}_{i}\geq\gamma\\
      \varphi\left(t^{\varphi}_{\inf\{i\leq j\leq N_\varphi\mid t^{\varphi}_{j+1} -t^{\varphi}_{j}\geq \gamma \}}\right)& \text{ if }
      s\in[t^{\varphi}_{i},t^{\varphi}_{i+1})\text{ for }0\leq i\leq N_\varphi\text{ and } t^{\varphi}_{i+1}-t^{\varphi}_{i}<\gamma,\\
    \end{array}
  \right.
  \]
  with the convention that $t^\varphi_0=0$ and $t^\varphi_{N_\varphi+1}=t$.
  In the case where $t-\gamma<t^\varphi_{N_\varphi}\leq t$, we set $\Gamma^{\gamma}\varphi:=\Gamma^{\gamma}\widetilde{\varphi}$ where
  \[
  \widetilde{\varphi}:=
  \left\{
    \begin{array}{ll}
      \varphi(t)& \text{ if } s\in[t-\gamma,t],\\
      \varphi(s)& \text{ if } s<t-\gamma.
    \end{array}
  \right.
  \]
  Now, for all $\varphi$ in $L_{N,\gamma}$, $\Gamma^{\gamma}\varphi$ lies in $K_{N,\gamma}$. Indeed, by construction, $\Gamma^{\gamma}\varphi$ has inter-jumps times larger than $\gamma$ and, because of Hypothesis \eqref{eq:hypTij}, $I_{t}(\Gamma^{\gamma}\varphi)\leq I_{t}(\varphi)$ since $\Gamma^{\gamma}\varphi$ is obtained by suppressing jumps in $\varphi$. Hence, we get
  \begin{multline*}
    \mathbb{E}\left[\exp\left(\frac{1}{\ee}\left(-h_{\ee}(X^{\ee}_{t})+\int_{0}^{t}R(X^{\ee}_{s},v^{\ee}_{t-s})ds \right)\right) \mathbbm{1}_{X^{\ee}\in L_{N,\gamma}}\ \right]\\
    \leq\sum_{i=1}^{n}\mathbb{E}\left[\exp\left(\frac{1}{\ee}\left(-h_{\ee}(X^{\ee}_{t})+\int_{0}^{t}R(X^{\ee}_{s},v^{\ee}_{t-s})ds \right)\right) \mathbbm{1}_{X^{\ee}\in L_{N,\gamma}}\mathbbm{1}_{\Gamma^{\gamma}X^{\ee}\in G_{\varphi^{\gamma}_{i}}}\ \right].
  \end{multline*}
  For all $\varphi$ in $L_{N,\gamma}$, we necessarily have $\Gamma^{\gamma}\varphi(t)=\varphi(t)$, hence, it follows from the definition of $\Gamma^{\gamma}$ that
  \begin{align*}
    &-h_{\ee}(\varphi(t))+\int_{0}^{t}R(\varphi(s),v^{\ee}_{t-s})ds \\&=-h_{\ee}(\Gamma^{\gamma}\varphi(t))+\int_{0}^{t}R(\Gamma^{\gamma}\varphi(s),v^{\ee}_{t-s})ds+\sum_{t^{\varphi}_{j+1}-t^{\varphi}_{j}<\gamma}\int_{t^{\varphi}_{j}}^{t^{\varphi}_{j+1}}\left(R(\varphi(s),v^{\ee}_{t-s})- R(\Gamma^{\gamma}\varphi(s),v^{\ee}_{t-s})\right)ds\\
    &\leq-h_{\ee}(\Gamma^{\gamma}\varphi(t))+\int_{0}^{t}R(\Gamma^{\gamma}\varphi(s),v^{\ee}_{t-s})ds+2N\gamma \|R\|_{\infty}.
  \end{align*}
  Using this last inequality, \eqref{eq:ineq1} and \eqref{eq:ineq2}, we get
  \begin{align*}
    \mathbb{E}&\left[\exp\left(\frac{1}{\ee}\left(-h_{\ee}(X^{\ee}_{t})+\int_{0}^{t}R(X^{\ee}_{s},v^{\ee}_{t-s})ds \right)\right) \mathbbm{1}_{x\in L_{N,\gamma}}\ \right]\\
    &\leq\sum_{i=1}^{n}\mathbb{E}\left[\exp\left(\frac{1}{\ee}\left(-h_{\ee}(\Gamma^{\gamma}X^{\ee}_{t})+\int_{0}^{t}R(\Gamma^{\gamma}X^{\ee}_{0},v^{\ee}_{t-s})ds \right)\right)\exp\left(\frac{2}{\ee}N\gamma \|R\|_{\infty}\right) \mathbbm{1}_{x\in L_{N,\gamma}}\mathbbm{1}_{\Gamma^{\gamma}X^{\ee}\in G_{\varphi^{\gamma}_{i}}}\ \right]\\
    &\leq\sum_{i=1}^{n}\exp\left(\frac{1}{\ee}\left(-h(\varphi^{\gamma}_{i}(t))+\int_0^t\sum_{j\in
          E}R(\varphi^{\gamma}_{i}(s),j){\cal M}_{t-s}(j)ds+2\delta \right)\right) \\ & \qquad\qquad \qquad\qquad\exp\left(\frac{2}{\ee}N\gamma \|R\|_{\infty}\right) \mathbb{P}\left(X^{\ee}\in L_{N,\gamma},\Gamma^{\delta}X^{\ee}\in G_{\varphi^{\gamma}_{i}}  \right).
  \end{align*}
  It follows from Theorem \ref{thm:upLDP} that
  \begin{multline*}
    \limsup_{\ee\to 0}\ee\log\mathbb{E}\left[\exp\left(\frac{1}{\ee}\left(-h_{\ee}(X^{\ee}_{t})+\int_{0}^{t}R(X^{\ee}_{s},v^{\ee}_{t-s})ds \right)\right) \mathbbm{1}_{X^{\ee}\in L_{N,\gamma}}\right]\\
    \leq\max_{1\leq i \leq n}\left(-h(\varphi^{\gamma}_{i}(t))+\int_0^t\sum_{j\in E}R(\varphi^{\gamma}_{i}(s),j){\cal M}_{t-s}(j)ds+2\delta+2N\gamma\|R\|_{\infty}-\inf_{\varphi\in A_{\varphi_{i}^{\gamma}}}I_t(\varphi) \right)
  \end{multline*}
  (with the convention $\inf_{\varphi\in\emptyset} I_t(\varphi)=+\infty$), with
  \[
  A_{\varphi^{\gamma}_{i}}=\{\varphi\in L_{N,\gamma}\mid \Gamma^{\gamma}\varphi\in G_{\varphi^{\gamma}_{i}} \}.
  \]
  Now, using Lemma \ref{lem:sko} and $I_{t}(\Gamma^{\gamma}\varphi)\leq I_{t}(\varphi)$, we have for all $i$ such that
  $A_{\varphi^\gamma_i}\neq\emptyset$,
  \[
  I_t(\varphi_{i}^{\gamma})=\inf_{\varphi\in A_{\varphi_{i}^{\gamma}}} I_{t}(\Gamma^\gamma\varphi)\leq \inf_{\varphi\in A_{\varphi_{i}^{\gamma}}}I_{t}(x),
  \]
  which gives
  \begin{multline*}
    \limsup_{\ee\to
      0}\ee\log\mathbb{E}\left[\exp\left(\frac{1}{\ee}\left(-h_{\ee}(X^{\ee}_{t})+\int_{0}^{t}R(X^{\ee}_{s},v^{\ee}_{t-s})ds
        \right)\right) \mathbbm{1}_{X^{\ee}\in L_{N,\gamma}}\right]\\
    \begin{aligned}
      & \leq\max_{1\leq i \leq
        n}\left(-h_{\ee}(\varphi^{\gamma}_{i}(t))+\int_0^t\sum_{j\in E}R(\varphi^{\gamma}_{i}(s),j){\cal M}_{t-s}(j)ds
        +2\delta+2N\gamma\|R\|_\infty-I_t(\varphi^{\gamma}_{i}) \right) \\ &
      \leq\max_{\varphi\in K_{N,\gamma}}\left\{-h(\varphi(t))+\int_0^t\sum_{j\in E}R(\varphi(s),j){\cal
          M}_{t-s}(j)ds-I_t(\varphi) \right\}+2\delta+2N\gamma\|R\|_\infty.
    \end{aligned}
  \end{multline*}
  Combining the last inequality with \eqref{eq:estim1} and \eqref{eq:estim3}, we obtain
  \begin{multline*}
    \limsup_{\ee\to
      0}\ee\log\mathbb{E}\left[\exp\left(\frac{1}{\ee}\left(-h_{\ee}(X^{\ee}_{t})+\int_{0}^{t}R(X^{\ee}_{s},v^{\ee}_{t-s})ds
        \right)\right) \right]\\
    \begin{aligned}
      &\leq \max\left\{a; \max_{\varphi\in K_{N,\gamma}}\left[-h(\varphi(t))+\int_0^t\sum_{j\in E}R(\varphi(s),j){\cal
          M}_{t-s}(j)ds-I_t(\varphi) \right]+2\delta+2N\gamma\|R\|_{\infty}\right\} \\
 & \leq\sup_{\varphi\in\mathbb{D}([0,t],E)\text{ s.t.\
          }\varphi_0=i}\left\{-h(\varphi(t))+\int_0^t\sum_{j\in E}R(\varphi(s),j){\cal M}_{t-s}(j)ds-I_t(\varphi)\right\}+2\delta+2N\gamma\|R\|_{\infty}.
    \end{aligned}
  \end{multline*}
  Since $\delta$ and $\gamma$ are arbitrary, we have proved Theorem~\ref{thm:varad-discrete}.
\end{proof}

Our goal is now to obtain a version of Theorem \ref{thm:main-Lipschitz}. Since its proof makes use of the
translation invariance of Brownian motion, we cannot use the same method. In particular, we will see that the function
$t\mapsto\varepsilon\log u^\varepsilon(t,i)$ may not be uniformly Lipschitz for particular initial conditions. Hence we shall prove
directly the Lipschitz regularity of the limit $V$. For this, we first need the following lemma.

\begin{lem}
  \label{lem:V-Lipschitz}
  For all subsequence $(\varepsilon_k)_{k\geq 1}$ as in Theorem~\ref{thm:varad-discrete}, the limit $V(t,i)$ of $\varepsilon_{k}\log
  u^\varepsilon_{k}(t,i)$ satisfies, for all $t>0$ and all $i\neq j\in E$,
  \[
  V(t,i)\geq V(t,j)-\mathfrak{T}(i,j).
  \]
  In particular, this inequality is satisfied for all $t\geq 0$ if and only if $h(i)\leq h(j)+\mathfrak{T}(i,j)$ for all $i\neq j$.
\end{lem}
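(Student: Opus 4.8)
The plan is to read the inequality off the variational representation of $V$ in Theorem~\ref{thm:varad-discrete}, using the elementary fact that a near-optimal path must remain at its starting state for a positive length of time. Fix $t>0$, two distinct states $i,j\in E$ and $\eta>0$. By Theorem~\ref{thm:varad-discrete} one can choose $\psi\in\mathbb{D}([0,t],E)$ with $\psi_0=j$ and
\[
-h(\psi_t)+\int_0^t\sum_{m\in E}R(\psi_s,m)\,\mathcal{M}_{t-s}(m)\,ds-I_t(\psi)\ \geq\ V(t,j)-\eta .
\]
Because $h$ is bounded, each $\mathcal{M}_{t-s}$ is a probability measure and $R$ is bounded on the relevant range (Lemma~\ref{lem:upBound} and continuity of $R$), the first two terms above are bounded, so $I_t(\psi)<\infty$; by~\eqref{eq:nJumps} the path $\psi$ then has only finitely many jumps, so its first jump time $t^{\psi}_{1}$ is strictly positive (set $t^{\psi}_{1}:=t$ if $\psi$ never jumps), and $\psi_s=\psi_0=j$ for all $s\in[0,t^{\psi}_{1})$.

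Next, fix any $\tau\in\bigl(0,\min(t^{\psi}_{1},t)\bigr)$ and define $\tilde\psi\in\mathbb{D}([0,t],E)$ by $\tilde\psi_s=i$ on $[0,\tau)$ and $\tilde\psi_s=\psi_s$ on $[\tau,t]$. This is c\`adl\`ag (because $\psi_\tau=j$), it satisfies $\tilde\psi_0=i$ and $\tilde\psi_t=\psi_t$, and its jumps are those of $\psi$ (all in $(\tau,t]$) together with one jump from $i$ to $j$ at time $\tau$, whence $I_t(\tilde\psi)=\mathfrak{T}(i,j)+I_t(\psi)$. Since $\tilde\psi$ and $\psi$ differ only on $[0,\tau)$, where they equal $i$ and $j$ respectively, and $\sum_{m\in E}\mathcal{M}_{t-s}(m)=1$, we get
\[
\Bigl|\int_0^t\sum_{m\in E}R(\tilde\psi_s,m)\mathcal{M}_{t-s}(m)\,ds-\int_0^t\sum_{m\in E}R(\psi_s,m)\mathcal{M}_{t-s}(m)\,ds\Bigr|\ \leq\ 2\tau\,\|R\|_{\infty}.
\]
Feeding $\tilde\psi$ into the variational formula of Theorem~\ref{thm:varad-discrete} for $V(t,i)$ then yields
\[
V(t,i)\ \geq\ -h(\psi_t)+\int_0^t\sum_{m\in E}R(\psi_s,m)\mathcal{M}_{t-s}(m)\,ds-I_t(\psi)-\mathfrak{T}(i,j)-2\tau\|R\|_{\infty}\ \geq\ V(t,j)-\eta-\mathfrak{T}(i,j)-2\tau\|R\|_{\infty}.
\]
Letting $\tau\to0^{+}$ and then $\eta\to0^{+}$ gives $V(t,i)\geq V(t,j)-\mathfrak{T}(i,j)$ for every $t>0$.

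For the last statement, the initial condition in~\eqref{eq:EDO} gives $\varepsilon_k\log u^{\varepsilon_k}(0,i)=-h^{\varepsilon_k}(i)\to-h(i)$, since $h^{\varepsilon}\to h$ uniformly, hence pointwise, on the finite set $E$; thus $V(0,i)=-h(i)$ and the inequality at $t=0$ reads precisely $h(i)\leq h(j)+\mathfrak{T}(i,j)$. Together with the case $t>0$ just treated, the inequality holds for all $t\geq0$ if and only if it holds at $t=0$, i.e.\ if and only if $h(i)\leq h(j)+\mathfrak{T}(i,j)$ for all $i\neq j$.

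The only point requiring some care is the claim at the end of the first paragraph that a near-optimal $\psi$ has a uniformly positive idle initial segment; this follows from $I_t(\psi)<\infty$ and $\inf_{a\neq b}\mathfrak{T}(a,b)>0$, and it is exactly what makes the splice above cost only $\mathfrak{T}(i,j)$ in the rate function and $O(\tau)$ in the running term. A more computational alternative works from~\eqref{eq:EDO} directly: retaining only the $i\to j$ mutation term gives $\dot{u}^{\varepsilon}(t,i)+g^{\varepsilon}(t)u^{\varepsilon}(t,i)\geq e^{-\mathfrak{T}(i,j)/\varepsilon}u^{\varepsilon}(t,j)$ with $g^{\varepsilon}(t)=c^{\varepsilon}(i)-\varepsilon^{-1}R(i,v^{\varepsilon}_t)$, and integrating this over $[t-\tau,t]$ against the usual integrating factor, then passing to the limit along $\varepsilon_k$ and $\tau\to0$, leads to a similar bound; the variational route is however shorter and avoids discussing the time regularity of $u^{\varepsilon}$.
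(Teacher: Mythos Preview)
Your proof is correct and follows essentially the same approach as the paper: choose a near-optimal path for $V(t,j)$, splice in a short initial segment at state $i$, and let the splice length go to zero. Your justification that the near-optimal path has finitely many jumps (hence a positive first jump time) and your explicit treatment of the $t=0$ case are slightly more detailed than the paper's version, but the argument is the same.
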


\begin{proof}
  Fix $i\neq j$, $t>0$ and $\eta>0$. Because of~(\ref{eq:V-discrete}), we can choose a function $\hat{\varphi}\in\mathbb{D}([0,t],E)$
  such that $\hat{\varphi}_0=j$ and
  \[
  V(t,j)\leq \eta-h(\hat{\varphi}_t)+\int_0^t\sum_{k\in E}R(\hat{\varphi}_s,k){\cal M}_{t-s}(k)ds-\sum_{0<s\leq
    t}\mathfrak{T}(\hat{\varphi}_{s-},\hat{\varphi}_s).
  \]
  For all $n\in\mathbb{N}$ such that $\frac{1}{n}<t^{\hat{\varphi}}_1\wedge t$, we define $\varphi^{(n)}\in\mathbb{D}([0,t],E)$ as
  \[
  \varphi^{(n)}_s=
  \begin{cases}
    i & \text{if }0\leq s<1/n \\
    \hat{\varphi}_s & \text{if }1/n\leq s\leq t.
  \end{cases}
  \]
  Using~(\ref{eq:V-discrete}) again, we have
  \begin{align*}
    \mathfrak{T}(i,j)+V(t,i) & \geq V(t,\varphi^{(n)}_t)+\int_0^t\sum_{k\in E}R(\varphi^{(n)}_s,k){\cal M}_{t-s}(j)ds-\sum_{1/n<s\leq
      t}\mathfrak{T}(\varphi^{(n)}_{s-},\varphi^{(n)}_s) \\ 
    & \geq V(t,\hat{\varphi}_t)-\frac{\|R\|_{{\cal H}}}{n}+\int_{1/n}^t\sum_{k\in E}R(\hat{\varphi}_s,k){\cal M}_{t-s}(j)ds-\sum_{0<s\leq
      t}\mathfrak{T}(\hat{\varphi}_{s-},\hat{\varphi}_s) \\
    & \geq V(t,j)-\eta-2 \frac{\|R\|_{{\cal H}}}{n},
  \end{align*}
  where $\|R\|_{{\cal H}}:=\sup_{i\in E,\ x\in{\cal H}}|R(i,x)|$. This concludes the proof letting $\eta\rightarrow 0$ and $n\rightarrow+\infty$.
\end{proof}

We can now state our result on the regularity of $V$.

\begin{thm}
  \label{thm:regularity-discrete}
  For all subsequence $(\varepsilon_k)_{k\geq 1}$ as in Theorem~\ref{thm:varad-discrete}, the limit $V(t,i)$ of $\varepsilon\log
  u^\varepsilon(t,i)$ is Lipschitz with respect to the time variable $t$ on $(0,+\infty)$. In addition, if $h(i)\leq h(j)+\mathfrak{T}(i,j)$ for all
  $i\neq j$, the function $V$ is Lipschitz on $\RR_+$.
\end{thm}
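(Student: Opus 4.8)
The plan is to read everything off the variational formula~\eqref{eq:V-discrete} by transporting near-optimal paths between the horizons $t$ and $t+\delta$. Write, for $\varphi\in\mathbb{D}([0,t],E)$ with $\varphi_0=i$,
${\cal J}_t(\varphi):=-h(\varphi_t)+\int_0^t\sum_{j\in E}R(\varphi_s,j){\cal M}_{t-s}(j)\,ds-\sum_{0<s\le t}\mathfrak{T}(\varphi_{s-},\varphi_s)$, so that $V(t,i)=\sup\{{\cal J}_t(\varphi):\varphi\in\mathbb{D}([0,t],E),\ \varphi_0=i\}$; recall $\|R\|_{\cal H}=\sup_{i\in E,\,x\in{\cal H}}|R(i,x)|$, and that by the construction of ${\cal M}$ in Lemma~\ref{lem:unifContinuity} one has ${\cal M}_s(E)=1$ for a.e.\ $s$, so the integral term of ${\cal J}_t$ has absolute value $\le t\|R\|_{\cal H}$ and changes by at most $2\|R\|_{\cal H}\,m$ when $\varphi$ is modified on a time set of measure $m$. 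One half of the estimate is easy and valid for every $t\ge0$: given $\varphi$ almost optimal for $V(t,i)$, prepend the constant value $i$ on $[0,\delta]$ and shift $\varphi$ onto $[\delta,t+\delta]$; this adds no jump and perturbs the integral by at most $\delta\|R\|_{\cal H}$, so $V(t+\delta,i)\ge V(t,i)-\delta\|R\|_{\cal H}$. The substance of the theorem is the reverse inequality, and near $t=0$ it is where the hypothesis on $h$ enters.

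First I would record an a priori bound on the number of jumps of near-optimal paths. Testing~\eqref{eq:V-discrete} on the constant path gives $V(t,i)\ge-h(i)-t\|R\|_{\cal H}\ge-\|h\|_\infty-t\|R\|_{\cal H}$, so any $\varphi$ with ${\cal J}_t(\varphi)\ge V(t,i)-1$ satisfies $I_t(\varphi)=-h(\varphi_t)+\int_0^t\sum_{j}R(\varphi_s,j){\cal M}_{t-s}(j)\,ds-{\cal J}_t(\varphi)\le 2(\|h\|_\infty+t\|R\|_{\cal H})+1=:C_I(t)$, hence by~\eqref{eq:nJumps} it has at most $N(t):=C_I(t)/\gamma$ jumps, with $\gamma>0$ as in Lemma~\ref{lem:upBound}; the function $N$ is nondecreasing and finite on bounded sets.

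For the reverse inequality on $(0,+\infty)$, fix $0<\delta\le t$ and $\eta\in(0,1]$, and let $\psi$ on $[0,t+\delta]$ satisfy ${\cal J}_{t+\delta}(\psi)\ge V(t+\delta,i)-\eta$, so $\psi$ has at most $N(t+\delta)$ jumps. Define $\varphi\in\mathbb{D}([0,t],E)$ by $\varphi_s=\psi_s$ on $[0,t-\delta]$ and $\varphi_s=\psi_{2s-(t-\delta)}$ on $[t-\delta,t]$; since $s\mapsto 2s-(t-\delta)$ is an increasing bijection of $[t-\delta,t]$ onto $[t-\delta,t+\delta]$, the path $\varphi$ has the same terminal value $\psi_{t+\delta}$ and exactly the same jumps, with the same $\mathfrak{T}$-weights, as $\psi$; thus $-h(\varphi_t)=-h(\psi_{t+\delta})$ and $I_t(\varphi)=I_{t+\delta}(\psi)$. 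Comparing the integral terms after the substitutions $\sigma=t-s$ (for $\varphi$) and $\sigma=t+\delta-s$ (for $\psi$): the contributions of $\sigma\in[0,\delta]$ and $\sigma\in[t,t+\delta]$ are each $\le\|R\|_{\cal H}\delta$, and over $\sigma\in[\delta,t]$ the two integrands pair ${\cal M}_\sigma$ against $R$ evaluated along $\psi$ at the two times $t-\sigma$ and $t+\delta-\sigma$, which agree unless $\psi$ jumps in $(t-\sigma,t+\delta-\sigma]$ — an interval of length $\delta$ — so they differ only on a set of $\sigma$ of measure $\le N(t+\delta)\delta$. Altogether $|{\cal J}_{t+\delta}(\psi)-{\cal J}_t(\varphi)|\le(3+2N(t+\delta))\|R\|_{\cal H}\delta$, so $V(t+\delta,i)\le{\cal J}_{t+\delta}(\psi)+\eta\le{\cal J}_t(\varphi)+(3+2N(t+\delta))\|R\|_{\cal H}\delta+\eta\le V(t,i)+(3+2N(t+\delta))\|R\|_{\cal H}\delta+\eta$. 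Letting $\eta\downarrow0$ and summing this one-step inequality (valid for $0<\delta\le t$) over a subdivision of any subinterval of $[a,b]$ with mesh $\le a$, one gets, for every $0<a<b$, that $V(\cdot,i)$ is Lipschitz on $[a,b]$ with constant $(3+2N(b))\|R\|_{\cal H}$, a bound independent of $a$; hence $V(\cdot,i)$ is Lipschitz on every $(0,b]$, i.e.\ on $(0,+\infty)$.

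It remains to reach $t=0$ under $h(i)\le h(j)+\mathfrak{T}(i,j)$ for all $i\ne j$. Here $V(0,i)=-h(i)$ (from~\eqref{eq:V-discrete} at $t=0$, or directly since $\varepsilon\log u^\varepsilon(0,i)=-h_\varepsilon(i)$). For any $\psi\in\mathbb{D}([0,t],E)$ with $\psi_0=i$, iterating the triangle inequality~\eqref{hyp:jumpMet} along the jumps of $\psi$ gives $I_t(\psi)\ge\mathfrak{T}(i,\psi_t)$ (convention $\mathfrak{T}(i,i)=0$), and the hypothesis gives $-h(\psi_t)-\mathfrak{T}(i,\psi_t)\le-h(i)$; therefore ${\cal J}_t(\psi)\le-h(\psi_t)+t\|R\|_{\cal H}-I_t(\psi)\le-h(i)+t\|R\|_{\cal H}$, so $V(t,i)\le-h(i)+t\|R\|_{\cal H}$, which together with the constant-path lower bound yields $|V(t,i)-V(0,i)|\le t\|R\|_{\cal H}$. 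Combined with the previous paragraph this makes $V(\cdot,i)$ Lipschitz on $[0,b]$ for every $b$. The main obstacle is the irregularity of the kernel ${\cal M}$: one cannot absorb the extra length $\delta$ by globally rescaling time, since that would displace the argument of ${\cal M}$, over which there is no control — hence the rescaling must be localized near the terminal time, and one must pay the cost of comparing $R$ along $\psi$ at shifted times, which is precisely why the a priori bound on the number of jumps of near-optimal paths is needed. Matching the terminal value as $t\to0$ is the second, softer difficulty, settled by the hypothesis on $h$ exactly as in Lemma~\ref{lem:V-Lipschitz}.
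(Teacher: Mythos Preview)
Your proof is correct and takes a genuinely different route from the paper. The paper argues probabilistically: it uses the Feynman--Kac representation and the Markov property at time $\delta$ to sandwich $u^{\ee}(t+\delta,i)$ between $e^{\mp M\delta/\ee}\,\EE_i[u^{\ee}(t,X^{\ee}_\delta)]$, estimates the law of $X^{\ee}_\delta$ (zero, one, or at least two jumps), takes $\ee\log$ and passes to the limit to obtain
\[
\max\Bigl\{V(t,i);\ \max_{j\neq i}V(t,j)-\mathfrak{T}(i,j)\Bigr\}-M\delta\ \le\ V(t+\delta,i)\ \le\ \max\Bigl\{V(t,i);\ \max_{j\neq i}V(t,j)-\mathfrak{T}(i,j)\Bigr\}+M\delta,
\]
and finally invokes Lemma~\ref{lem:V-Lipschitz} to reduce the max to $V(t,i)$. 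You instead work purely with the variational formula~\eqref{eq:V-discrete}, transporting near-optimal paths between the horizons $t$ and $t+\delta$; your a~priori bound on the number of jumps replaces the role of Lemma~\ref{lem:V-Lipschitz}. This is more elementary in that it never returns to $u^{\ee}$, and your treatment of $t=0$ via the triangle inequality~\eqref{hyp:jumpMet} is essentially the same mechanism as the paper's Lemma~\ref{lem:V-Lipschitz}.

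One point to tighten: the paper's argument gives a \emph{global} Lipschitz constant $M$, while your constant $(3+2N(b))\|R\|_{\cal H}$ grows linearly in $b$, so you only obtain local Lipschitz continuity; the clause ``Lipschitz on every $(0,b]$, i.e.\ on $(0,+\infty)$'' conflates the two. This is harmless for the downstream applications (only continuity of $V$ is used later), but if you want the global constant you can get it with a simpler compression: given $\psi$ near-optimal for $V(t+\delta,i)$, set $\varphi_s=\psi_{s+\delta}$ on $[0,t]$. The integral terms then match exactly after the substitution $u=s-\delta$ up to an error $\le\|R\|_{\cal H}\delta$; the jump costs of $\psi$ on $(0,\delta]$ dominate $\mathfrak{T}(i,\psi_\delta)$ by your triangle argument, so ${\cal J}_{t+\delta}(\psi)\le V(t,\psi_\delta)-\mathfrak{T}(i,\psi_\delta)+\|R\|_{\cal H}\delta\le V(t,i)+\|R\|_{\cal H}\delta$ by Lemma~\ref{lem:V-Lipschitz}. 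This avoids the jump-counting altogether.
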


\begin{proof}
  Fix $t\geq 0$ and $i\in E$. For all $\delta>0$ and $\varepsilon>0$, proceeding as in the proof of Theorem~\ref{thm:main-Lipschitz},
  Markov's property entails
  \begin{align*}
    e^{-\frac{M\delta}{\varepsilon}}\EE_i[u^\varepsilon(t,X^\varepsilon_\delta)]\leq u^\varepsilon(t+\delta,i)\leq
    e^{\frac{M\delta}{\varepsilon}}\EE_i[u^\varepsilon(t,X^\varepsilon_\delta)].
  \end{align*}
  We can now estimate the distribution of $X^\varepsilon_\delta$ as follows. Let $N^\varepsilon$ be the number of jumps of
  $X^\varepsilon$ on the time interval $[0,\delta]$. For all $j\neq i$,
  \[
  \delta e^{-|E|\delta}e^{-\frac{\mathfrak{T}(i,j)}{\varepsilon}}\leq\PP_i(X^\varepsilon_{\delta}=j,\ N^\varepsilon\leq 1)=\int_0^\delta
  e^{-\frac{\mathfrak{T}(i,j)}{\varepsilon}}e^{-c^\varepsilon(i)s}e^{-c^\varepsilon(j)(\delta-s)}ds\leq \delta
  e^{-\frac{\mathfrak{T}(i,j)}{\varepsilon}}
  \]
  where $|E|$ is the cardinality of $E$. Similarly,
  \[
  1-|E|\delta\leq \PP_i(X^\varepsilon_\delta=i,\ N^\varepsilon=0)\leq 1
  \]
  and
  \[
  0\leq\PP_i(N^\varepsilon\leq 2)\leq \sum_{j\neq i}c_\varepsilon(j)
  e^{-\frac{\mathfrak{T}(i,j)}{\varepsilon}}\int_0^\delta(\delta-s)ds\leq \frac{|E|^2}{2}\delta^2.
  \]
  Putting all these inequalities together, we obtain
  \begin{align*}
    (1-|E|\delta)u^\varepsilon(t,i)+\sum_{j\neq i}\delta e^{-|E|\delta}e^{-\frac{\mathfrak{T}(i,j)}{\varepsilon}}u^\varepsilon(t,j) & \leq
    \EE_i[u^\varepsilon(t,X^\varepsilon_\delta)] \\ & \leq u^\varepsilon(t,i)+\sum_{j\neq i}\delta
    e^{-\frac{\mathfrak{T}(i,j)}{\varepsilon}} u^\varepsilon(t,j)+\delta^2\bar{C}\frac{|E|^2}{2},
  \end{align*}
  where $\bar{C}$ is the right-hand side of the main result in Lemma~\ref{lem:upBound}. Taking $\varepsilon\log$ of both sides of the
  last inequality and sending $\varepsilon$ to 0, we obtain
  \begin{equation}
  \label{eq:vtiBound}
  \max\left\{V(t,i);\max_{j\neq i} V(t,j)-\mathfrak{T}(i,j)\right\}-M\delta\leq V(t+\delta,i)\leq \max\left\{V(t,i);\max_{j\neq i}
    V(t,j)-\mathfrak{T}(i,j)\right\}+M\delta.
  \end{equation}
  Since $\delta>0$ was arbitrary, the result follows from Lemma~\ref{lem:V-Lipschitz}.
\end{proof}


\section{Study of the variational problem in the finite case}
\label{sec:uniqueness-E-finite}
Assume as above that $E$ is a finite set.
Our goal here is to study the limit problem 
\begin{equation}
\label{eq:VAR}
V(t,i)=\sup_{\varphi(0)=i}\left\{-h(\varphi(t))+\int_{0}^{t}\int_{\mathbb{R}^{r}} R(\varphi(u),y)\ \mathcal{M}_{t-u}(dy)\ du - I_{t}(\varphi) \right\}, \quad (t,i)\in\mathbb{R}_{+}\times E.
\end{equation}

A direct adaptation of Theorem~\ref{thm:varad-discrete} to obtain the dynamic programming version of~\eqref{eq:VAR}: for any $0\leq s\leq
t$, the limit of $\varepsilon_k\log u_{\varepsilon_k}$ satisfies
\[
V(t,i)=\sup_{\varphi(s)=i}\left\{V(s,\varphi(t))+\int_{s}^{t}\int_{
\mathbb{R}^{r}}R(\varphi(u),y)\ \mathcal{M}_{s+t-u}(dy)\ du-I_{s,t}(\varphi)  \right\},
\]
where, for all $\varphi$ in $\mathbb{D}([s,t],E)$
\[
I_{s,t}(\varphi)=\sum_{u\in(s,t]}\mathfrak{T}(\varphi(u-),\varphi(u)).
\]

In the sequel, we make use of the following assumptions on the dynamical systems related to problem \eqref{eq:EDO}.
For all $A\subset E$, we define the dynamical system in $\mathbb{R}_+^A:=\{(u_i,i\in A):u_i\geq 0 \text{ for all }i\in A\}$ denoted $S_{A}$ by
\begin{equation}
  \label{eq:syst-approche}
  \dot{u_{i}}=u_{i}\ R_{i}\left(\sum_{j\in A}\Psi_{l}(j)u_{j},\ 1\leq l\leq r\right),\ \quad i\in A, 
\end{equation}
where $R_{i}(x)$ stands for $R(i,x)$.

\medskip

\noindent
{\bf Hypothesis (H)}

\noindent
For all $A\subset E$, let $\text{Eq}_A$ be the set of steady states of $S_A$. Assume that all element of $\text{Eq}_A$ are hyperbolic
steady states and $S_{A}$ admits a unique steady state
\[
u^{\ast}_{A}=(u^{\ast}_{A,i})_{i\in A},
\]
such that, for all $i$ in $A$
\[
u^{\ast}_{A,i}=0\quad\Longrightarrow\quad R_{i}\left(\sum_{j\in E}\Psi_{l}(j)u^{\ast}_{A,j},\ 1\leq l\leq
  r\right)<0.
\]
Assume also that there exists a strict Lyapunov function $L_A:\mathbb{R}_+^A\rightarrow\RR$ for the dynamical system $S_A$, which
means that $L_A$ is $C^1$, admits as unique global minimizer on $\mathbb{R}_+^A$ and satisfies, for any solution $u(t)$ to $S_A$,
$$
\frac{d L_{A}(u(t))}{dt}=\sum_{i\in A}\frac{\partial L_A}{\partial u_i}(u(t))u_i(t) R_{i}\left(\sum_{j\in A}\Psi_{l}(j)u_{j}(A),\ 1\leq
  l\leq r\right)<0.
$$
for all $t\geq 0$ such that $u(t)\not\in \text{Eq}_A$.
\medskip

The hyperbolicity assumption means that for all steady state $u^{\ast}$, $u^{\ast}_{i}=0$ implies that 
$$
R_{i}\left(\sum_{j\in
    E}\Psi_{l}(j)u^{\ast}_{j},\ 1\leq l\leq r\right)\neq 0.
$$
For finite dimensional dynamical systems, it is well known that the hyperbolicity condition is generic under perturbation
(see~\cite{demazure}). Since hyperbolic equilibria are isolated, Hypothesis~(H) implies that $\text{Eq}_A$ is finite. The global
minimizer of $L_A$ is necessarily a stable steady state of $S_A$, hence it must be $u^\ast_A$ (since all the other steady states are
not stable). In addition, the equilibrium $u^\ast_A$ is globally asymptotically stable, in the sense that, for all initial condition
$u(0)=(u_i(0))_{i\in A}$ in $(0,+\infty)^A$, the solution to~\eqref{eq:syst-approche} converges to $u^\ast_A$. Indeed, since
$\dot{u}_i/u_i$ is uniformly bounded, $u(0)\in (0,+\infty)^A$ implies that $u(t)\in (0,+\infty)^A$ for all $t>0$. Now, the existence
of a strict Lyapunov function implies that $u(t)$ converges to an equilibrium $u^\ast$. If $u^{\ast}\neq u^\ast_A$, because of
Hypothesis (H), there exists $i$ such that $u^\ast_i=0$ and $\dot{u}_i(t)/u_i(t)\geq\frac{1}{2}R_{i}\left(\sum_{j\in
    E}\Psi_{l}(j)u^{\ast}_{j},\ 1\leq l\leq r\right)>0$ for all $t$ large enough. This is a contradiction with the convergence of
$u(t)$ to $u^\ast$.

General classes of dynamical systems satisfying Hypothesis~(H) have been given in~\cite{champagnat-jabin-raoul-10}. We give here two
examples.

\noindent
{\bf Example 1} 
Our first example corresponds to indirect competition for environmental resources and is an extension of the chemostat
model~\eqref{eq:chemostat}:
\[
R_{i}(v)=-d_{i}+c_{i}\sum_{l=1}^{r}\frac{\alpha_{l}\, \Psi_{l}(i)}{1+v_{l}},
\]
where $d_{i},c_{i},\alpha_{l}$ are positive real numbers satisfying $c_{i}\sum_{l=1}^{r}\alpha_{l}\Psi_{l}(i)>d_{i}$, for all $i$ in $E$.

\bigskip

\noindent 
{\bf Example 2} This example corresponds to direct competition of Lotka-Volterra (or logistic) type:
we assume that $E=\{1,\dots,r\}$ and
\[
R_{i}(v)=r_{i}-\sum_{l=1}^{r}v_{l},
\]
with the hypothesis that there exist positive constants $c_{1},\dots,c_{r}$ such that
\[
c_{i}\Psi_{i}(j)=c_{j} \Psi_{j}(i),\quad \forall i,j\in E,
\]
and
\[
\sum_{i,j\in E}x_{i}x_{j}c_i\Psi_{i}(j)>0,\quad \forall x\in \mathbb{R}^{r}\setminus{\{0\}}.
\]
In this example the matrix $(\Psi_{i}(j))_{i,j\in E}$ is interpreted as a competition matrix between the different types of
individuals. The last positivity assumption is satisfied for various competition
matrices~\cite{champagnat-jabin-raoul-10,champagnat-HDR}, for example, if
\[
c_{i}\Psi_{i}(j)=\beta_{i}\beta_{j}e^{\frac{|x_{i}-x_{j}|^{2}}{2}},
\] 
for any distinct $x_{1},\dots,x_{k}$ in $\mathbb{R}^{d}$ and  $\beta$ in $\mathbb{R}_{+}^{r}$. Another example is given by
\[
c_{i}\Psi_{i}(j)=\int_{\mathbb{R}^{d}}\ e^{(x_{i}+x_{j})\cdot z}\ \pi(dz),
\]
for all positive measure $\pi$ on $\mathbb{R}^{d}$ with support having non empty interior. 



Hypothesis~(H) implies a key property given in the next lemma.

\begin{lem}
  \label{lem:hittingTime}
  Assume Hypothesis~(H). For all $A\subset E$ and all $\rho>0$ small enough, the first hitting time $t^\ast_A(u(0),\rho)$ of the
  $\rho$-neighborhood of $u^\ast_A$ by a solution $u(t)$ to $S_A$ satisfies
  $$
  t_A^\ast(u(0),\rho)\leq C_\rho^\ast (1+\sup_{i\in A} -\log u_i(0))
  $$
  for some constant $C_\rho^\ast$ only depending on $\rho$.
\end{lem}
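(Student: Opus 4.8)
The plan is to argue by induction on $|A|$, splitting the statement into (i) a bound, uniform in the initial condition, for the hitting time of $B(u^\ast_A,\rho)$ starting from a fixed compact subset of the open orthant, and (ii) a bound of the form $C(1+\sup_i(-\log u_i(0)))$ for the time needed to reach such a set. Throughout write $A^+=\{i\in A:u^\ast_{A,i}>0\}$, $A^0=A\setminus A^+$, and $m(0)=\sup_{i\in A}(-\log u_i(0))$; note $A^+\neq\emptyset$, since $u^\ast_A=0$ would force $R_i(0)<0$ for all $i$, contradicting Assumption~\ref{it:c} at $v=0$. As $m(0)<\infty$ we have $u(0)\in(0,+\infty)^A$, the trajectory stays there, and arguing as in Lemma~\ref{lem:upBound} it enters the compact invariant set $\mathcal{H}$ after a universally bounded time, during which each $-\log u_i$ varies by at most a constant; so we may assume $u(0)\in\mathcal{H}\cap(0,+\infty)^A$. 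The base case $|A|=1$ is immediate: then $A=A^+=\{i\}$, $R_i>0$ near $u_i=0$ by Assumption~\ref{it:c}, so $u_i$ grows exponentially to a fixed size in time $O(1+m(0))$ and then $S_{\{i\}}$ reaches $B(u^\ast_{\{i\}},\rho)$ in bounded time.

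For (i), fix $\rho$ small enough that $\overline{B}(u^\ast_A,2\rho)\subset\{u:u_i>\delta \text{ for all }i\in A^+\}$ with $\delta:=\tfrac12\min_{i\in A^+}u^\ast_{A,i}$, and put $\mathcal{K}_\delta=\{u\in\mathcal{H}:u_i\ge\delta \text{ for all }i\in A^+\}$, which is compact. For $w\in\mathcal{K}_\delta$, write $B=\{i\in A^0:w_i=0\}$; the solution from $w$ stays on the invariant face $\{u_i=0,\ i\in B\}$, where $S_{A\setminus B}$ acts, and from the uniqueness in Hypothesis~(H) one checks that $u^\ast_{A\setminus B}=u^\ast_A|_{A\setminus B}$, so by global asymptotic stability of $S_{A\setminus B}$ the solution converges to $u^\ast_A$ and its hitting time of the open set $B(u^\ast_A,\rho)$ is finite. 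Since the hitting time of an open set is upper semicontinuous in the initial condition, it is bounded on the compact set $\mathcal{K}_\delta$ by some $T(\rho)<\infty$. Hence, writing $t_1$ for the first time the trajectory from $u(0)$ meets $\mathcal{K}_\delta$, we get $t^\ast_A(u(0),\rho)\le t_1+T(\rho)$, and it remains to bound $t_1$.

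For (ii): if every $i\in A^+$ has $u_i(0)\ge\delta$ then $t_1=0$; otherwise choose $i_0\in A^+$ with $u_{i_0}(0)<\delta$. The coordinate $i_0$ is forced back up, as follows. The point $u^\dagger:=u^\ast_{A\setminus\{i_0\}}$, embedded in $\{u_{i_0}=0\}$, is a hyperbolic steady state of $S_A$ distinct from $u^\ast_A$; by the uniqueness in Hypothesis~(H) it fails the metastability condition, and since every zero coordinate $i\neq i_0$ of $u^\dagger$ already satisfies $R_i(v^\dagger)<0$, the failure must occur at $i_0$, so $R_{i_0}(v^\dagger)>0$ by hyperbolicity; by continuity $R_{i_0}\ge c>0$ on a neighbourhood $\mathcal{N}$ of $u^\dagger$, and $u^\dagger$ is exponentially stable for $S_{A\setminus\{i_0\}}$, hence robust under small persistent perturbations. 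While $u_{i_0}(t)<\delta$ the variables $(u_j)_{j\neq i_0}$ solve an $O(\delta)$--perturbation of $S_{A\setminus\{i_0\}}$, so (by the induction hypothesis, using persistence of the hyperbolic attractor $u^\dagger$ under this small perturbation) they reach $\mathcal{N}$ within time $\le C(|A|-1)(1+m(0))$ and then stay $O(\delta)$--close to $u^\dagger$, so that $R_{i_0}(v(t))\ge c/2$ thereafter and $u_{i_0}(t)\ge e^{-m(0)-Mt}$ grows at least like $e^{(c/2)t}$, reaching $\delta$ within a further time $O(1+m(0))$. The trajectory visits only finitely many such configurations --- $L_A$ strictly decreases along the flow and the equilibria, being hyperbolic, are isolated --- each visit costing time $O(1+m(0))$ with $-\log u_i$ remaining $O(m(0))$ throughout, so $t_1\le C_1(1+m(0))$; together with (i) this gives $t^\ast_A(u(0),\rho)\le C^\ast_\rho(1+m(0))$.

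The main obstacle is step (ii), and inside it the requirement that the three phases --- steering the non--$i_0$ coordinates to $\mathcal{N}$, keeping them there, and growing $u_{i_0}$ back to size $\delta$ --- glue together with constants independent of $u(0)$, even though the coordinate $u_{i_0}$, although small, perturbs the $S_{A\setminus\{i_0\}}$--dynamics over a time of order $m(0)$. The hyperbolicity of all equilibria assumed in Hypothesis~(H) is exactly what makes this work: near any saddle the trajectory escapes along the unstable manifold in a time only logarithmic in its distance at entry, and combined with the strict Lyapunov function (which forbids recurrence) it confines the trajectory to a finite cascade of equilibria, so the inductive bookkeeping terminates with the claimed estimate.
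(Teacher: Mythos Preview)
Your approach by induction on $|A|$ is genuinely different from the paper's, which works directly with the Lyapunov function $L_A$ for the full system $S_A$, without induction. The paper partitions $\mathcal{H}_A$ into finitely many level bands $\mathcal{H}_A^k$ of $L_A$ and shows (Step~1) that the trajectory can visit a neighbourhood of at most one unstable equilibrium $u^\ast$ while traversing each band. On each band it then introduces $F_k(t)=L_A(u(t))-c\log u_{i(u^\ast)}(t)$, where $i(u^\ast)$ is a coordinate with $u^\ast_{i(u^\ast)}=0$ and $R_{i(u^\ast)}(v^\ast)>0$; for suitable $c>0$ this $F_k$ decreases at a uniform rate both away from equilibria (where $L_A$ drops) and near $u^\ast$ (where $\log u_{i(u^\ast)}$ grows), so the time spent in band $k$ is bounded by a constant times $1+\sup_i(-\log u_i(t_k))$. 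Propagating this across the finitely many bands gives the lemma.

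Your step (ii) has a real gap. You invoke the induction hypothesis for $S_{A\setminus\{i_0\}}$ to steer the coordinates $(u_j)_{j\neq i_0}$ into $\mathcal{N}$ in time $C(|A|-1)(1+m(0))$, but these coordinates do \emph{not} solve $S_{A\setminus\{i_0\}}$: they solve an $O(\delta)$-perturbation of it, and the induction hypothesis as stated says nothing about perturbed systems. Over the relevant time scale $O(m(0))$, a Gronwall comparison between perturbed and unperturbed trajectories produces an error of order $\delta\,e^{Cm(0)}$, which is useless. Your final paragraph correctly identifies this as the main obstacle and points to hyperbolicity and the strict Lyapunov function as the cure, but that is precisely where the work lies: once you make that sketch rigorous --- bounding the number of saddle passages via $L_A$ and the time near each saddle via the exponentially growing unstable coordinate --- you have reproduced the paper's direct argument, and the induction on $|A|$ no longer contributes anything. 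If you insist on the inductive structure, you must strengthen the statement proved by induction to one uniform over $O(\delta)$-perturbations of the vector fields $S_B$ for $B\subsetneq A$; this can be done but is heavier than the paper's route.
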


This lemma means that, when one coordinate $u_i(0)$ of $u(0)$ is close to zero, the time needed to converge to $u^\ast_A$
grows linearly with the logarithm of $(u_i(0))^{-1}$.

\begin{proof}
  Let $A$ be a non-empty subset of $E$ and $u$ be a solution of the dynamical system $S_{A}$. Without loss of generality, we can
  assume that $L_A(u^\ast_A)=0$, i.e.\ $\min_{u\in\RR_+^E}L_A(u)=0$. Let $\mathcal{H}_A$ be the set of $u\in\RR_+^A$ such that
  $\bar{u}\in\mathcal{H}$, where $\bar{u}\in\RR_+^E$ is obtained from $u$ by setting to zero the coordinates with indices in
  $E\setminus A$ and the set $\mathcal{H}$ was defined in Lemma \ref{lem:upBound}. Let
  $\mathcal{U}^{\ast}=(\text{Eq}_A\cap\mathcal{H}_A)\setminus\{u^{\ast}_{A}\}$. \medskip

  \noindent\emph{Step 1. Descrease of $L_A(u(t))$ between the visits of two neighborhoods of steady states}

  Let
  \[
  \underline{d}:=\min_{u^\ast,v^\ast\in\mathcal{U}^\ast \cup\{u^\ast_A\}}|u^\ast-v^\ast|
  \]
  and $\alpha_0>0$ such that 
  \[
  \sum_{i\in A}\partial_{i} L_A(x)x_{i} R_{i}\left(\sum_{j\in A}\Psi_{l}(j)x_{j},\ 1\leq
    l\leq r\right)<-\alpha_0, \quad \forall x\notin \bigcup_{u^{\ast}\in \mathcal{U}^{\ast}\cup\{u^\ast_A\}}B(u^{\ast},\underline{d}/4).
  \]
  Hence, setting $\|R\|_{\mathcal{H}}:=\sup_{i\in E,\ x\in \mathcal{H}}|R(i,x)|$, the decrease of $L_A(u(t))$ between two visits by
  $u(t)$ of two distinct balls $B(u^{\ast},\underline{d}/4)$ for $u^{\ast}\in \mathcal{U}^{\ast}\cup\{u^\ast_A\}$ is at least
  $\frac{\alpha_0\underline{d}}{2 \|R\|_{\mathcal{H}}}$. Now, let $\delta<\underline{d}/4$ be small enough to have, for all
  $u^\ast\in\mathcal{U}^\ast$,
  \[
  \sup_{u\in B(u^\ast,\delta)} L_A(u)-\inf_{u\in B(u^\ast,\delta)} L_A(u)<\frac{\alpha_0\underline{d}}{2 \|R\|_{\mathcal{H}}}.
  \]
  Hence, defining for all $k\in\NN$,
  \[
  \mathcal{H}_A^k:=\left\{u\in\mathcal{H}_A\mid k \frac{\alpha_0\underline{d}}{2 \|R\|_{\mathcal{H}}}\leq L_A(u)<(k+1)
    \frac{\alpha_0\underline{d}}{2 \|R\|_{\mathcal{H}}}\right\},
  \]
  the solution $u(t)$ can only visit at most one ball of type $B(u^\ast,\delta)$ for some $u^\ast\in\mathcal{U}^\ast$ during its
  travelling time through $\mathcal{H}_{A}^{k}$.
  \medskip

  \noindent\emph{Step 2. Time spent in $\mathcal{H}^k_A$}

  According to Hypothesis~(H), for any steady state $u^{\ast}\in \mathcal{U}^{\ast}$, we have
  \[
  R_{i}\left(\sum_{j\in
      A}\Psi_{l}(j)u^{\ast}_{j},\ 1\leq l\leq r\right)> 0, \quad \text{for at least one } i(u^\ast)\in A \text{ such that } u^{\ast}_{i(u^\ast)}= 0.
  \]
  Consequently, reducing $\delta>0$ if necessary, there exists a positive real number $R_{-}$ such that, for all $u^\ast\in
  \mathcal{U}^{\ast}$,
  \begin{equation}
    \label{eq:borne-R}
    \forall x\in B(u^{\ast},\delta),\quad R_{i(u^\ast)}\left(\sum_{j\in
        A}\Psi_{l}(j)u^{\ast}_{j},\ 1\leq l\leq r\right)> R_{-}.
  \end{equation}
  In addition, since $L_{A}$ is a strict Lyapunov function, there exists a positive real number $\alpha$ such that 
  \begin{equation}
    \label{eq:borne-L_A}
    \sum_{i\in A}\partial_{i} L_A(x)x_{i} R_{i}\left(\sum_{j\in A}\Psi_{l}(j)x_{j},\ 1\leq
      l\leq r\right)<-\alpha, \quad \forall x\notin \bigcup_{u^{\ast}\in \mathcal{U}^{\ast}\cup\{u^\ast_A\}}B(u^{\ast},\delta).
  \end{equation}
  Now, let $c$ be a positive real number satisfying
  \[
  c^{-1}<\alpha^{-1}\|R\|_{\mathcal{H}}.
  \]

  Let $k_{0}$ such that $u_{0}\in \mathcal{H}^{k_{0}}_{A}$. For any $0\leq k\leq k_{0}$, let $t_{k}$ be the hitting time of
  $\mathcal{H}^{k}$ and $t_{-1}$ the first hitting time of $B(u^\ast_A,\delta)$, and define for all $k\geq 0$
  \[
  F_{k}(t)=
  \begin{cases}
    L_{A}(u(t)) & \text{if }u(t)\notin \bigcup_{u^{\ast}\in \mathcal{U}^{\ast}\cup\{u^\ast_A\}}B(u^{\ast},\delta)\text{ for all
    }t\in[t_k,t_{k-1}], \\
    L_{A}(u(t))-c\log u_{i(u^{\ast})}(t) &\text{if }\exists t\in[t_k,t_{k+1}],\ \exists u^*\in\mathcal{U}^*\text{ such that }u(t)\in B(u^{\ast},\delta).
  \end{cases}
  \]
  Note that Step 1 implies that, for all $k\geq 0$, there exists at most one $u^*\in\mathcal{U}^*$ such that $u(t)\in
  B(u^{\ast},\delta)$ for some $t\in[t_k,t_{k+1}]$. Using~\eqref{eq:borne-L_A},~\eqref{eq:borne-R}, the fact that
  $\frac{d}{dt}L_A(u(t))\leq 0$ and the definition of $\|R\|_{\mathcal{H}}$, we have for all $t\in[t_k,t_{k+1}]$,
  \[
  \frac{dF_k(t)}{dt}\leq(-c R_-)\vee(-\alpha+c\|R\|_{\mathcal{H}})<0.
  \]
  Therefore, for all $t\in[t_k,t_{k+1}]$,
  \[
  F_k(t)\leq (k+1) \frac{\alpha_0\underline{d}}{2 \|R\|_{\mathcal{H}}}-c\inf_{i\in A}\log u_i(t_k).
  \]
  In addition it follows from Lemma~\ref{lem:upBound} that
  \[
  F_k(t)\geq k \frac{\alpha_0\underline{d}}{2
    \|R\|_{\mathcal{H}}}-\log\left(\frac{v_{\textnormal{max}}+A(|E|-1)e^{-\gamma/\varepsilon}}{\Psi_{\textnormal{min}}}\right).
  \]
  Setting $k=k_0$, we deduce that there exists a constant $C_{k_0-1}>0$ such that
  \[
  t_{k_0-1}\leq C_{k_0-1}(1-\inf_{i\in A}\log u_i(0))
  \]
  and, since $\left|\frac{\dot{u}_i}{u_i}\right|\leq\|R\|_\mathcal{H}$,
  \[
  -\inf_{i\in A}\log u_i(t_{k_0-1})\leq -\inf_{i\in A}\log u_i(0)+\|R\|_\mathcal{H}C_{k_0-1}(1-\inf_{i\in A}\log u_i(0)).
  \]
  Proceeding by induction, it follows that there exist constants $C_k$ and $D_k$ depending only on $k_0$ and $u(0)$ such that, for
  all $k\geq 1$,
  \[
  t_{k-1}-t_k\leq C_k(1-\inf_{i\in A}\log u_i(0))\quad\text{and}\quad -\inf_{i\in A}\log u_i(t_{k-1})\leq D_k(1-\inf_{i\in A}\log u_i(0)).
  \]
  Similarly, there exist constants $C_k$ and $D_k$ such that
  \[
  t_{-1}-t_0\leq C_0(1-\inf_{i\in A}\log u_i(0))\quad\text{and}\quad -\inf_{i\in A}\log u_i(t_{-1})\leq D_0(1-\inf_{i\in A}\log u_i(0)).
  \]
  Since $L$ is a strict Lyapunov function, for all $\rho<\delta$, the time needed to enter $B(u^\ast_A,\rho)$ starting from any point
  in $B(u^\ast_A,\delta)$ is bounded by a constant depending only on $\rho$. The result follows.
\end{proof}

This lemma entails the following property.
\begin{prop}
  \label{prop:hyp3}
  Assume Hypothesis~(H). Let $(\ee_{k})_{k\geq 1}$ be as in Theorem~\ref{thm:regularity-discrete}. For any
  $t\geq 0$, there exists $\rho_t>0$ such that, for all $s\in(t,t+\rho_t]$, $v^\varepsilon_{s}$ converges to $F(\{V(t,\cdot)=0\})$,
  where the convergence is uniform in all compact subsets of $(t,t+\rho_t]$ and where
  \[
  F(A)=\left(\sum_{j=1}^{r}\eta_{i}(j)u^{\ast}_{A,j}\right)_{1\leq i \leq r},\quad \forall A\subset E.
  \]
  In particular, the weak limit $\mathcal{M}_{s}$ of $\delta_{v_{\ee_{k}}(s)}$ obtained in Lemma~\ref{lem:unifContinuity} satisfies
  \[
  \mathcal{M}_{s}=\delta_{F\left(\{V(t,\cdot)=0\}\right)},\text{\ for almost all }s\in(t,t+\rho_t).
  \]
  and the function $t\mapsto F(\{V(t,\cdot)=0\})$ is right-continuous.
\end{prop}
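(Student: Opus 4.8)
\medskip
\noindent\textbf{Proof proposal.} The plan is to run the system~\eqref{eq:EDO} forward from time $t$ on a short interval $[t,t+\rho_t]$, after rescaling time by the factor $\varepsilon_k^{-1}$, so that it becomes an exponentially small perturbation of the dynamical system $S_A$ attached to the set $A:=\{i\in E:\ V(t,i)=0\}$, and then to combine Lemma~\ref{lem:hittingTime} with the asymptotic stability of $u^\ast_A$ granted by Hypothesis~(H). First I would note, using Lemma~\ref{lem:upBound}, that $\varepsilon_k\log u^{\varepsilon_k}(t,i)\to V(t,i)\le0$ for every $i$ and $\max_{i\in E}V(t,i)=0$, so that $A\ne\emptyset$, and that $\delta_0:=\min_{i\in E\setminus A}\bigl(-V(t,i)\bigr)>0$ whenever $A\ne E$. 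The threshold $\rho_t$ is to be fixed only at the end, small in terms of $\delta_0$, of $\gamma:=\min_{i\ne j}\mathfrak T(i,j)$, of $\|R\|_{\mathcal H}$ and of a Lipschitz constant $L$ for the vector fields of the systems $S_A$.

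The first step is to check that the coordinates outside $A$ stay exponentially small on all of $[t,t+\rho_t]$: since the mutation rates are bounded by $e^{-\gamma/\varepsilon_k}$ and the trajectory remains in $\mathcal H$ (Lemma~\ref{lem:upBound}), where $\tilde R=R$ and $|R|\le\|R\|_{\mathcal H}$, a Gronwall estimate on $\frac{d}{ds}u^{\varepsilon_k}(s,i)$ gives $u^{\varepsilon_k}(s,i)\le C\,e^{(-\min(\delta_0,\gamma)+\|R\|_{\mathcal H}\rho_t)/\varepsilon_k}$ for $i\notin A$, hence $\le e^{-c/\varepsilon_k}$ for some $c>0$ once $\rho_t$ is small, uniformly in $s$; the rescaled mutation terms $\varepsilon_k\sum_{j}e^{-\mathfrak T(i,j)/\varepsilon_k}\bigl(u^{\varepsilon_k}(s,j)-u^{\varepsilon_k}(s,i)\bigr)$ obey the same bound. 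The second step is to set $w^{\varepsilon_k}(\tau,\cdot):=u^{\varepsilon_k}(t+\varepsilon_k\tau,\cdot)$ and observe that $(w^{\varepsilon_k}(\tau,i))_{i\in A}$ solves the equations of $S_A$ for $\tau\in[0,\rho_t/\varepsilon_k]$ up to an additive perturbation bounded by $e^{-c/\varepsilon_k}$ (the part of $v^{\varepsilon_k}$ carried by the coordinates outside $A$, plus the rescaled mutations, by the first step). Because the two dynamics share the same value at $\tau=0$, Gronwall's lemma yields $\bigl\|(w^{\varepsilon_k}(\tau,i))_{i\in A}-\bar w(\tau)\bigr\|\le L^{-1}e^{(L\rho_t-c)/\varepsilon_k}$ for $\tau\le\rho_t/\varepsilon_k$, where $\bar w$ is the solution of $S_A$ started from $(u^{\varepsilon_k}(t,i))_{i\in A}\in(0,\infty)^A$; this error is $o(1)$ provided $\rho_t<c/L$. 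Now $V(t,i)=0$ for $i\in A$ gives $\sup_{i\in A}\bigl(-\log u^{\varepsilon_k}(t,i)\bigr)=\delta_k/\varepsilon_k$ with $\delta_k\to0$, so Lemma~\ref{lem:hittingTime} shows that $\bar w$ enters any prescribed $\rho'$-neighbourhood of $u^\ast_A$ at a slow time $\sigma_k\to0$; together with the tracking estimate and the Lyapunov stability of the hyperbolic, asymptotically stable equilibrium $u^\ast_A$ (as recalled just after Hypothesis~(H)), this gives, for every $\rho>0$, that $\|(u^{\varepsilon_k}(s,i))_{i\in A}-u^\ast_A\|\le\rho$ for all $s\in[t+\sigma_k,t+\rho_t]$ and $k$ large. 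Combined with the first step, $\|v^{\varepsilon_k}_s-F(A)\|\le C\rho$ on $[t+a,t+\rho_t]$ for any fixed $a>0$ and $k$ large, and letting $\rho\to0$ gives the announced locally uniform convergence, after fixing $\rho_t$ so that all the constraints on it above hold at once (which is possible since, once $\rho_t$ is below a first threshold, $c$ can be taken to depend only on $\delta_0$ and $\gamma$).

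It then remains to deduce the statements on $\mathcal M_s$ and the right-continuity. For each $s\in(t,t+\rho_t)$, $v^{\varepsilon_k}_s\to F(\{V(t,\cdot)=0\})$ implies $\delta_{v^{\varepsilon_k}_s}\to\delta_{F(\{V(t,\cdot)=0\})}$; since the occupation measures $\Gamma^{\varepsilon_k}_T$ built in Lemma~\ref{lem:unifContinuity} are $ds\otimes\delta_{v^{\varepsilon_k}_s}$, dominated convergence identifies the restriction of the limiting measure to $(t,t+\rho_t)\times\mathcal H$ with $ds\otimes\delta_{F(\{V(t,\cdot)=0\})}$, so $\mathcal M_s=\delta_{F(\{V(t,\cdot)=0\})}$ for almost every $s\in(t,t+\rho_t)$. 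Applying this at two times $t<t'<t+\rho_t$ and comparing the values of $\mathcal M_s$ on the overlapping interval $(t',\min(t+\rho_t,t'+\rho_{t'}))$ forces $F(\{V(t',\cdot)=0\})=F(\{V(t,\cdot)=0\})$; hence $s\mapsto F(\{V(s,\cdot)=0\})$ is constant on $[t,t+\rho_t)$ and, in particular, right-continuous, for every $t\ge0$.

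I expect the main obstacle to be the quantitative control in the second step: the Gronwall tracking must be propagated over a rescaled time window of length $\rho_t/\varepsilon_k\to\infty$, which is feasible only because the perturbation is exponentially small in $\varepsilon_k$, and this is precisely what dictates the choice $\rho_t<c/L$; moreover the fact that relaxation to $u^\ast_A$ occurs within slow time $o(1)$ rests entirely, through Lemma~\ref{lem:hittingTime}, on the identity $V(t,i)=0$ for $i\in A$. The remaining estimates are a careful but essentially routine accounting of exponentially small errors.
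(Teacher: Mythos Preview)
Your proposal is correct and follows essentially the same route as the paper: identify $A=\{V(t,\cdot)=0\}$, show the off-$A$ coordinates stay exponentially small, compare the $A$-coordinates on the rescaled interval $[0,\rho_t/\varepsilon_k]$ to the solution of $S_A$ via a Gronwall estimate, and then invoke Lemma~\ref{lem:hittingTime} together with the stability of $u^\ast_A$. The only noteworthy difference is in the first step: the paper controls $u^{\varepsilon_k}(t+s,i)$ for $i\notin A$ by appealing to the Lipschitz regularity of $V$ already established in Theorem~\ref{thm:regularity-discrete} (with a separate argument at $t=0$ via~\eqref{eq:vtiBound}), whereas you re-derive this bound by a direct Gronwall estimate on the ODE, which makes your argument slightly more self-contained and avoids the case distinction at $t=0$.
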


Note that, because the set $E$ is finite, the range of the function $F$ is finite, and thus, assuming that $t\mapsto
F(\{V(t,\cdot)=0\})$ is right-continuous implies that the union of the time intervals where this function is constant is equal to
$\RR_+$. We emphasize that this is not true in general for measurable functions taking values in finite sets.

\begin{proof}
Let us fix $t\geq 0$ and define $A=\{i\in E \mid V(t,i)=0 \}$. Since, at time $t$, we have $V(t,i)=\lim_{k\to\infty}\ee_{k}\log
u^{\ee_{k}}(t,i)$, it follows that, for all $\delta>0$, for all $k$ large enough,
\begin{equation}
\label{eq:initEstimate}
u^{\ee_{k}}(t,i)\geq e^{-\frac{\delta}{\ee_{k}}},\quad \forall i\in  A.
\end{equation}
In addition, if $t>0$, using the continuity of $V$ at time $t$ (see Theorem~\ref{thm:regularity-discrete}), there exists some
positive real numbers $\eta_{t}$ and $\alpha$ such that $V(t+s,i)<-\alpha$ for all $i\in E\setminus A$ and all $s\in[0,\eta_{t}]$.
If $t=0$, setting
$$\beta=\max\left(\max_{i\in E\setminus A} -h(i),\ \max_{i\neq j}-h(j)-\mathfrak{T}(i,j)\right)<0$$
and $-\alpha\in (\beta,0)$, it follows from  the upper bound of \eqref{eq:vtiBound} that
$V(t+s,i)\leq-\alpha$ for all $i\in E\setminus A$ and all $s\in[0,\eta_{t}]$ where
\[
\eta_{t}=\frac{\beta-\alpha}{M}.
\]
In both cases, we obtain
\begin{equation}
\label{eq:initEstimate2}
\sup_{s\in[0,\eta_{t}]}u^{\ee_{k}}(t+s,i)\leq e^{-\frac{\alpha}{\ee_{k}}},\quad \forall i\in E\setminus A.
\end{equation}

Now, let $(u_{i}(s), s\in\mathbb{R}_{+})_{i\in A}$ be the solution of the dynamical system $S_{A}$, as defined in \eqref{eq:syst-approche}, with initial conditions $u_{i}(0)=u^{\ee_{k}}(t,i)$, for all $i$ in $A$. According to \eqref{eq:syst-approche} and \eqref{eq:EDO}, we have, for any time $s$ and any $i$ in $A$,
\begin{align*}
& \left|u^{\ee_{k}}(t+s,i)-u_{i}(s/\ee_{k})\right|\\  
&\qquad\qquad\leq\frac{1}{\ee_{k}}\int_{0}^{s}\left|u^{\ee_{k}}(t+u,i)R(i,v^{\ee_{k}}_{t+u})-u_{i}(u/\ee_{k})R_{i}\left(\sum_{j\in A}\Psi_{l}(j)u_{j}(u/\varepsilon_k),\ 1\leq l\leq
  k\right)\right|\, du\\
  &\qquad\qquad\quad+\int_{0}^{s}\sum_{j\in E}e^{-\frac{\mathfrak{T}(i,j)}{\ee_{k}}}\left|u^{\ee_{k}}(u,j)-u^{\ee_{k}}(u,i) \right|\, du.
\end{align*}
Using Hypothesis \eqref{it:1}, in conjunction with Lemma \ref{lem:upBound}, we get
\[
\left|u^{\ee_{k}}(t+s,i)-u_{i}(s/\ee_{k})\right|\leq \frac{D}{\ee_{k}}\int_{0}^{s}\left|u^{\ee_{k}}(t+u,i)-u_{i}(u/\ee_{k})\right|\,
du+2 Cse^{-\frac{\gamma}{\ee_{k}}},
\]
with $\gamma$ defined in Lemma \ref{lem:upBound} and some positive constants $C$ and $D$. Hence, Gronwall Lemma entails that
\begin{equation}
\label{eq:GronwalEstimate}
\left|u^{\ee_{k}}(t+s,i)-u_{i}(s/\ee_{k})\right|\leq \frac{\tilde{C}}{\ee_{k}}\exp\left(\frac{Ds-\gamma}{\ee_{k}} \right),
\end{equation}
for some positive constant $\tilde{C}$.
\medskip

Now, let us compare $u^{\ee_{k}}(t+s,i)$ with the expected limit $u^{\ast}_{i,A}$ which gives
\[
\left|u^{\ee_{k}}(t+s,i)-u^{\ast}_{i,A}\right|\leq \left|u^{\ee_{k}}(t+s,i)-u_{i}(s/\ee_{k})\right|+\left|u_{i}(s/\ee_{k})-u^{\ast}_{i,A}\right|.
\]
According to Lemma \ref{lem:hittingTime}, we have, for any postive real number $\rho$,
\[
\frac{s}{\ee_{k}}>C_\rho^\ast (1+\sup_{i\in A} -\log u_i(0))\Longrightarrow \left|u_{i}(s/\ee_{k})-u^{\ast}_{i,A}\right|<\rho.
\]
However, according to \eqref{eq:initEstimate}, for all $\delta>0$, for $k$ large enough,
\[
-\log u_i(0)\leq \frac{\delta}{\ee_{k}},\quad \forall i\in A..
\]
This last inequality gives, for all $k$ large enough, 
\[
s>2\delta C_\rho^\ast\Longrightarrow \left|u_{i}(s/\ee_{k})-u^{\ast}_{i,A}\right|<\rho.
\]
This entails in conjonction with \eqref{eq:GronwalEstimate} that 
\[
\limsup_{k\rightarrow+\infty}\sup_{s\in [2\delta C^*_\rho, \gamma/2D]}\left|u^{\ee_{k}}(t+s,i)-u^{\ast}_{i,A}\right|<\rho,\quad \forall i\in A.
\]
Since  $\rho$ and $\delta$ were arbitrary, the desired uniform convergence follows from~\eqref{eq:initEstimate2} with $\rho_{t}=\frac{\gamma}{2D}\wedge \eta_{t}$. The remaining statements follow easily.
\end{proof}

\begin{cor}
  \label{cor:discrete}
  Assume Hypothesis~(H). Any limit $V$ of $\varepsilon_k\log u^{\varepsilon_k}$ along a subsequence as in
  Theorem~\ref{thm:regularity-discrete} satisfies $V(0,i)=-h(i)$ for all $i\in E$ and for all $t\geq 0$
  \begin{equation*}
    V(t,i)=\sup_{\varphi(0)=i}\left\{-h(\varphi(t))+\int_{0}^{t} R(\varphi(u),F(\{V(t-u,\cdot)=0 \}))\ du - I_{t}(\varphi) \right\},
   \end{equation*}
  and its dynamic programming version
  \begin{equation}
    \label{eq:VARN}
    V(t,i)=\sup_{\varphi(s)=i}\left\{V(s,\varphi(t))+\int_{s}^{t} R(\varphi(u),F(\{V(t+s-u,\cdot)=0 \}))\  du-I_{s,t}(\varphi)  \right\}.
  \end{equation}
  In addition, the problem~\eqref{eq:VARN} admits a unique solution such that $t\mapsto F(\{V(t,\cdot)=0\})$ is right-continuous. In
  particular, the full sequence $(\varepsilon\log u^\varepsilon)_{\varepsilon>0}$ converges to this unique solution when
  $\varepsilon\rightarrow 0$.
\end{cor}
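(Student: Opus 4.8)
The approach combines Proposition~\ref{prop:hyp3}, which identifies the kernel $\mathcal{M}$ with Dirac masses indexed by the zero sets of $V$, with the variational formula of Theorem~\ref{thm:varad-discrete}, to rewrite the optimization problem in the closed form~\eqref{eq:VARN}; one then exploits the causal (Volterra-type) structure of~\eqref{eq:VARN} together with the right-continuity of $t\mapsto F(\{V(t,\cdot)=0\})$ to determine $V$ uniquely, interval by interval.

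First, $V(0,i)=\lim_k\varepsilon_k\log\exp(-h_{\varepsilon_k}(i)/\varepsilon_k)=-h(i)$ by the uniform convergence of $h_\varepsilon$. Next set $g(t)=F(\{V(t,\cdot)=0\})$; by Proposition~\ref{prop:hyp3} it is right-continuous, and since $F$ has finite range ($E$ being finite), $\mathbb{R}_+$ is the union of the (possibly accumulating) maximal constancy intervals of $g$. On a.e.\ point of each such interval $\mathcal{M}_s=\delta_{g(s)}$, and a short continuation argument — reapplying Proposition~\ref{prop:hyp3} at successive points of a fixed constancy interval, using that the $\rho_t$ it produces cannot overshoot the next jump of $g$ — upgrades this to $\mathcal{M}_s=\delta_{F(\{V(s,\cdot)=0\})}$ for almost every $s>0$. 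Substituting into Theorem~\ref{thm:varad-discrete} (resp.\ into its dynamic programming version stated at the start of Section~\ref{sec:uniqueness-E-finite}) replaces $\int_0^t\sum_j R(\varphi_s,j)\mathcal{M}_{t-s}(j)\,ds$ by $\int_0^t R(\varphi(u),F(\{V(t-u,\cdot)=0\}))\,du$ (resp.\ by the integral with $F(\{V(s+t-u,\cdot)=0\})$), which are precisely the first displayed identity of the corollary and~\eqref{eq:VARN}. Hence any subsequential limit $V$ satisfies $V(0,\cdot)=-h$, is Lipschitz in $t$ on $(0,\infty)$ (Theorem~\ref{thm:regularity-discrete}), solves~\eqref{eq:VARN}, has $g$ right-continuous (Proposition~\ref{prop:hyp3}), and satisfies $\max_i V(t,i)=0$ for all $t$ (from Lemma~\ref{lem:upBound}), so in particular $\{V(t,\cdot)=0\}\neq\emptyset$.

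For uniqueness, let $V$ be any solution with these properties, and let $0=a_0<a_1<a_2<\dots$ be the endpoints of the maximal constancy intervals of $g$, with $g\equiv\bar v_n$ on $[a_n,a_{n+1})$. We argue by induction: \emph{assume $V$ is known and continuous on $[0,a_n]$}; then $\bar v_n=F(\{V(a_n,\cdot)=0\})$ and $V(a_n,\cdot)$ are known (the base case $n=0$ uses $V(0,\cdot)=-h$). For $t\in[a_n,a_{n+1})$ and $u\in[a_n,t]$ one has $a_n+t-u\in[a_n,a_{n+1})$, so the running parameter in~\eqref{eq:VARN} written at $s=a_n$ is the constant $\bar v_n$, and~\eqref{eq:VARN} reduces to
\[
V(t,i)=\sup_{\varphi(a_n)=i}\Big\{V(a_n,\varphi(t))+\int_{a_n}^t R(\varphi(u),\bar v_n)\,du-I_{a_n,t}(\varphi)\Big\},
\]
the value function of a finite-state optimal-control problem with \emph{fixed} potential $R(\cdot,\bar v_n)$ and terminal cost $V(a_n,\cdot)$; its right-hand side is an explicit, finite, continuous-in-$t$ function of the known data (finiteness uses $\|\bar v_n\|_1\in[v_{\text{min}}/2,2v_{\text{max}}]$, which holds because $\mathcal{M}_s$ is supported in the compact set $\mathcal{H}$ of Lemma~\ref{lem:upBound}). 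Thus $V$ is determined and continuous on $[a_n,a_{n+1})$, and $a_{n+1}$ — the first time after $a_n$ at which $F(\{V(\cdot,\cdot)=0\})$ leaves $\bar v_n$ — is likewise determined. Continuity of $V$ at $a_{n+1}$ closes the induction; it follows from~\eqref{eq:VARN} written at $s\uparrow a_{n+1}$ together with the inequality $V(t,i)\ge V(t,j)-\mathfrak{T}(i,j)$ (valid for any solution of~\eqref{eq:VARN} by the argument of Lemma~\ref{lem:V-Lipschitz}), which moreover yields a uniform Lipschitz bound in $t$ and so lets the construction pass through any accumulation point of the $a_n$ (determine $V$ up to it, extend by continuity, restart). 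This proves uniqueness.

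Finally, any sequence $\varepsilon_n\downarrow 0$ has, by the weak precompactness and diagonal extraction of Lemma~\ref{lem:unifContinuity}, a subsequence along which $\varepsilon\log u^\varepsilon$ converges pointwise on $\mathbb{R}_+\times E$; by the above the limit must be the unique solution $V^*$, so the whole family $(\varepsilon\log u^\varepsilon)_{\varepsilon>0}$ converges to $V^*$. The main obstacle is the discontinuous dependence of~\eqref{eq:VARN} on $V$ through the finite-range functional $F(\{V(\cdot,\cdot)=0\})$: straightforward contraction or Gronwall estimates fail, and the purpose of Hypothesis~(H) and Proposition~\ref{prop:hyp3} is exactly to deliver the right-continuity of this functional, which together with the causality of~\eqref{eq:VARN} makes the stepwise determination of $V$ possible; the remaining difficulty is the (careful but routine) bookkeeping at the interfaces and at possible accumulation points of the $a_n$.
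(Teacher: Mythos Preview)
Your proof is correct and follows essentially the same approach as the paper. The paper's own proof is extremely terse: it declares that the variational formulas are an ``obvious'' consequence of Proposition~\ref{prop:hyp3} (substituted into Theorem~\ref{thm:varad-discrete}), and for uniqueness simply observes that the set $A=\{t\geq 0:\text{uniqueness up to time }t\}$ cannot have a finite supremum, by continuity of $V$ for $t>0$ and by~\eqref{eq:vtiBound} at $t=0$. Your interval-by-interval induction on the constancy intervals of $g$ is exactly a fleshed-out version of this connectedness argument: your step showing $V$ is determined on $[a_n,a_{n+1})$ by a fixed-potential control problem is what makes $A$ open on the right, and your continuity/Lipschitz argument at the interfaces (and at accumulation points) is what makes $A$ closed on the right.

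Two minor points worth tightening. First, your claim that $a_{n+1}$ is ``likewise determined'' deserves one more sentence: one should note that the auxiliary function $\widetilde V$ defined on $[a_n,\infty)$ by the fixed-potential formula with $\bar v_n$ coincides with $V$ exactly until the first time $F(\{\widetilde V(t,\cdot)=0\})\neq\bar v_n$, and this time---computable from the known data---must equal $a_{n+1}$ (the alternative in either direction contradicts the right-continuity of $g_V$). Second, your continuation argument for $\mathcal{M}_s=\delta_{g(s)}$ a.e.\ is best phrased as: the supremum of times up to which the identity holds cannot be finite, since Proposition~\ref{prop:hyp3} applied at that supremum immediately pushes it further (here one uses that $g$ is actually constant on $[t,t+\rho_t]$, which follows from $F(\mathrm{supp}(u^\ast_A))=F(A)$ under Hypothesis~(H)). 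Both points are implicit in the paper as well.
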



\begin{proof}
  The only non-obvious consequence of Proposition~\ref{prop:hyp3} is the uniqueness of a solution to~\eqref{eq:VARN} with $t\mapsto
  F(\{V(t,\cdot)=0\})$ right-continuous. To prove this, observe that, by continuity of $V$ for $t>0$ and using~\eqref{eq:vtiBound}
  for $t=0$, the set
  \[
  A=\{t\geq 0\mid\text{there exists a unique solution up to time }t\}
  \]
  cannot have a finite upper bound.
\end{proof}

We conclude by giving the discrete Hamilton-Jacobi formulation of the variational problem solved by $V$.

\begin{thm}
  \label{thm:HJD}
  Under Hypothesis~(H) and assuming that $h(i)\leq h(j)+\mathfrak{T}(i,j)$ for all $i\neq j$, the problem
  \begin{equation}
    \label{eq:HJD}
    \dot{V}(t,i)=\sup\left\{R_{j}(F(\{V(t,\cdot)=0\})) \mid j\in E, V(t,j)-\mathfrak{T}(j,i)=V(t,i)\right\},\quad V(0,i)=h(i)\
    \forall i\in E
  \end{equation}
  (with the convention $\mathfrak{T}(i,i)=0$) admits a unique solution such that $t\mapsto F(\{V(t,\cdot)=0\})$ is right-continuous
  which is the unique solution to the variational problem \eqref{eq:VARN}.
\end{thm}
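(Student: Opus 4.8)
The idea is to pass from the known solvability of the variational problem~\eqref{eq:VARN} (Corollary~\ref{cor:discrete}) to the differential problem~\eqref{eq:HJD} by showing the two are equivalent inside the class of functions $W$ for which $t\mapsto F(\{W(t,\cdot)=0\})$ is right-continuous. The structural point is that, since $E$ is finite, $F$ has finite range, so for any such $W$ the map $t\mapsto F(\{W(t,\cdot)=0\})$ is a step function, equal to some constant $w_n$ on the successive intervals $[\tau_n,\tau_{n+1})$ of a partition of $\RR_+$. When $[s,t]\subset[\tau_n,\tau_{n+1})$ the argument $t+s-u$ of $F(\{W(\cdot,\cdot)=0\})$ in~\eqref{eq:VARN} never leaves $[\tau_n,\tau_{n+1})$, so there~\eqref{eq:VARN} reduces to the value function of a finite-horizon control problem on the finite set $E$, with running reward $R_j(w_n)$ at state $j$ and a jump from $i$ to $j$ costing $\mathfrak T(i,j)$ (and, through Proposition~\ref{prop:hyp3}, $\mathcal M\equiv\delta_{w_n}$ there).

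First I would check that the function $V$ produced by Corollary~\ref{cor:discrete} solves~\eqref{eq:HJD}. Fix $i\in E$ and $t\ge 0$, set $w=F(\{V(t,\cdot)=0\})$, and take $\delta>0$ so small that $[t,t+\delta)$ lies in the constancy interval of $t\mapsto F(\{V(\cdot,\cdot)=0\})$ to the right of $t$ (Proposition~\ref{prop:hyp3} and the right-continuity in Corollary~\ref{cor:discrete}). The dynamic programming relation on $[t,t+\delta]$, with $F(\{V(\cdot,\cdot)=0\})$ frozen at $w$, gives
\[
V(t+\delta,i)=\sup_{\varphi(t)=i}\Big\{V(t,\varphi(t+\delta))+\int_t^{t+\delta}R(\varphi(u),w)\,du-I_{t,t+\delta}(\varphi)\Big\}.
\]
Near-optimal $\varphi$ make only boundedly many jumps (each jump costs at least $\gamma>0$, $V$ is bounded by Lemma~\ref{lem:upBound}, and the integral term is $O(\delta)$), and the subadditivity~\eqref{hyp:jumpMet} collapses any chain of jumps over $[t,t+\delta]$ to a single jump; optimizing over the time of that jump then yields
\[
V(t+\delta,i)=\max\Big\{V(t,i)+\delta R_i(w),\ \max_{j\ne i}\big(V(t,j)-\mathfrak T(i,j)+\delta\max(R_i(w),R_j(w))\big)\Big\}+o(\delta),
\]
which refines~\eqref{eq:vtiBound}. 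Since $V(t,j)-\mathfrak T(i,j)\le V(t,i)$ for all $j$ by Lemma~\ref{lem:V-Lipschitz}, dividing by $\delta$ and letting $\delta\to0$ shows that the right derivative $\partial_t^+V(t,i)$ exists and equals the supremum of $R_j(w)$ over the $j$ (including $j=i$, as $\mathfrak T(i,i)=0$) for which the last inequality is an equality, which is the right-hand side of~\eqref{eq:HJD}. The initial value comes from Corollary~\ref{cor:discrete}, and Theorem~\ref{thm:regularity-discrete} (using $h(i)\le h(j)+\mathfrak T(i,j)$) makes $V(\cdot,i)$ Lipschitz on $\RR_+$, so $V$ genuinely solves~\eqref{eq:HJD}.

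For uniqueness, let $W$ be any solution of~\eqref{eq:HJD} with $t\mapsto F(\{W(t,\cdot)=0\})$ right-continuous, with constancy intervals $[\tau_n,\tau_{n+1})$ and values $w_n$. On $[\tau_n,\tau_{n+1})$, relation~\eqref{eq:HJD} is the Bellman equation of the frozen control problem of the first paragraph; a verification argument — integrating~\eqref{eq:HJD} along controlled trajectories for one inequality and testing with near-optimal piecewise-constant controls (again discarding multi-jump controls via~\eqref{hyp:jumpMet}) for the reverse one — identifies $W$ on $[\tau_n,\tau_{n+1})$ with the value function of that problem, that is with the right-hand side of~\eqref{eq:VARN} for $\mathcal M\equiv\delta_{w_n}$ and terminal data $W(\tau_n,\cdot)$. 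Concatenating these identities over $n$ (licit because within each interval the argument of $F(\{W(\cdot,\cdot)=0\})$ in~\eqref{eq:VARN} stays in that interval) shows that $W$ solves~\eqref{eq:VARN} with $F(\{W(\cdot,\cdot)=0\})$ right-continuous, hence $W=V$ by the uniqueness part of Corollary~\ref{cor:discrete}; an induction over $n$ also gives that the breakpoints $\tau_n$ of $W$ are those of $V$. As~\eqref{eq:VARN} and~\eqref{eq:HJD} then have the same unique solution in this class, the theorem follows.

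I expect the main obstacle to be the verification step on a single frozen interval: the right-hand side of~\eqref{eq:HJD} — the supremum of $R_j(w_n)$ over the relevant index set — depends discontinuously on $W(t,\cdot)$, since that index set jumps as $t$ varies even at fixed $w_n$, so no Lipschitz ODE uniqueness theorem applies and one must argue through the two-sided optimal-control estimate, carefully controlling the number of jumps of admissible paths and the time reversal built into~\eqref{eq:VARN}. A secondary point is the endpoint $t=0$, where Theorem~\ref{thm:regularity-discrete} and continuity of $V$ are unavailable and must be replaced by the hypothesis $h(i)\le h(j)+\mathfrak T(i,j)$ and the one-sided bound~\eqref{eq:vtiBound}, exactly as in the proof of Proposition~\ref{prop:hyp3}.
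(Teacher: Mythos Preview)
Your derivation that the variational solution $V$ satisfies~\eqref{eq:HJD} is essentially the paper's own argument: freeze $F$ at $w=F(\{V(t,\cdot)=0\})$ on a short right-interval using Proposition~\ref{prop:hyp3}, invoke the dynamic programming relation~\eqref{eq:VARN} on $[t,t+\delta]$, discard paths with two or more jumps for $\delta$ small (each jump costs at least $\gamma>0$, while the gain is $O(\delta)$), and read off the right derivative. The paper makes the same case distinction you do between the situation $V(t,i)>V(t,j)-\mathfrak T(i,j)$ for all $j\neq i$ and the situation where some equality holds.

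Where you diverge from the paper is in the uniqueness argument. The paper proves uniqueness for~\eqref{eq:HJD} \emph{directly}, without going back through~\eqref{eq:VARN}: it lets $t^\star$ be the supremum of times of uniqueness and shows $t^\star=+\infty$ by a layer-by-layer argument. First the coordinates $i$ with $V(t^\star,i)=0$ are handled (for these only $j=i$ can be active in the supremum, so $\dot V=R_i(F(\{V(t^\star,\cdot)=0\}))$ is forced for a short time); then, inductively in $k$, the coordinates with $V(t^\star,i)\in[-k\gamma/2,-(k-1)\gamma/2)$ are determined by the already-fixed higher layers. This is short and avoids any verification argument. Your route---show every solution $W$ of~\eqref{eq:HJD} also solves~\eqref{eq:VARN} on each constancy interval of $F(\{W(\cdot,\cdot)=0\})$, then invoke the uniqueness in Corollary~\ref{cor:discrete}---is conceptually appealing because it exhibits the equivalence of the two formulations, but it costs you the verification step you flag as the main obstacle: integrating the non-Lipschitz right-hand side of~\eqref{eq:HJD} along controlled paths, with the time reversal (note that it is $\mathfrak T(j,i)$, not $\mathfrak T(i,j)$, that appears in~\eqref{eq:HJD}) handled correctly. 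That step is doable, but the paper's direct layered argument sidesteps it entirely.
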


The result also extends to cases where the condition $h(i)\leq h(j)+\mathfrak{T}(i,j)$ is not satisfied for some $i\neq j$. In this
case, one must replace the initial condition in~\eqref{eq:HJD} by 
\[
V(0,i)=\max\left\{-h(i);\max_{j\neq i}-h(j)-\mathfrak{T}(i,j)\right\},\quad\forall i\in E
\]
and the solution of~\eqref{eq:HJD} coincides with the solution to the variational problem~\eqref{eq:VARN} for positive times only.

\begin{proof}
  We first prove the uniqueness part and then that the solution of~\eqref{eq:VARN} also solves~\eqref{eq:HJD}.
  \medskip

  \noindent \emph{Step 1: Uniqueness for~\eqref{eq:HJD}.}
    
  Let $t\in[0,+\infty]$ be the largest time such that there is uniqueness for~\eqref{eq:HJD} up to time $t$ and assume $t<\infty$.
  Let $i\in E$ be such that $V(t,i)=0$. Since $\mathfrak{T}(j,i)>0$ for all $j\neq i$ and because of the Lipschitz regularity of any
  solution $V$ of~\eqref{eq:HJD}, $\dot{V}(s,i)=R_i( F(\{V(t,\cdot)=0\}))$ for all $s\in[ t,t+\delta_t]$ for some $\delta_t>0$. Hence
  $V(s,i)$ is uniquely determined for all $i$ such that $V(t,i)=0$ and $s\in[ t,t+\delta_t]$. We proceed similarly for all the $i'\in
  E$ such that $V(t,i')\in[-\inf_{i\neq j}\mathfrak{T}(i,j)/2,0)$: their dynamics is determined only by $ F(\{V(t,\cdot)=0\})$ and
  $V(s,i)$ for all $i$ such that $V(t,i)=0$, for $s\geq t$ in the time interval $[ t,t+\delta_t]$. We obtain a similar result
  inductively for all the $i'\in E$ such that $V(t,i')\in[-k\inf_{i\neq j}\mathfrak{T}(i,j)/2,-(k-1)\inf_{i\neq
    j}\mathfrak{T}(i,j)/2)$ for all $k\geq 1$. This contradicts the finiteness of $t$ and hence uniqueness for~\eqref{eq:HJD} is
  proved. 
  \medskip
 
  \noindent\emph{Step 2: The function $V$ of Corollary~\ref{cor:discrete} solves~\eqref{eq:HJD}.}
    
  Let $t\geq 0$ and $i\in E$ be fixed and let us prove that the solution $V$ of~\eqref{eq:VARN} is right-differentiable with respect
  to time at $(t,i)$ with derivative given by~\eqref{eq:HJD}. According to Corollary \ref{cor:discrete}, we have for all $\delta>0$
  \begin{equation}
    \label{eq:pf-HJD}
    V(t+\delta,i)=\sup_{\varphi(t)=i}\left\{V(t,\varphi(t+\delta))+\int_{t}^{t+\delta} R(\varphi(u),F(\{V(t+\delta-u,\cdot)=0 \}))\
      du-I_{t,t+\delta}(\varphi)  \right\}
  \end{equation}
  and there exists $\delta_{t}>0$, such that $F(\{V(t+\delta,\cdot)=0 \})=F(\{V(t,\cdot)=0 \})$ for all $\delta<\delta_{t}$. 

  We know from Lemma~\ref{lem:V-Lipschitz} that $V(t,i)\geq V(t,j)-\mathfrak{T}(i,j)$ for all $i\neq j$. If the inequality is strict
  for all $j\neq i$, it is clear that the supremum in~\eqref{eq:pf-HJD} is attained for the constant function $\varphi\equiv i$ for
  sufficiently small $\delta>0$ and hence $\dot{V}(t+\delta,i)=R_i(F(\{V(t,\cdot)=0\})$, which entails~\eqref{eq:HJD}.

  If there exists $j\neq i$ such that $V(t,i)= V(t,j)-\mathfrak{T}(i,j)$, let $j^*\in E$ be such that $R_{j}(F(\{V(t,\cdot)=0\})$ is
  maximal among all $j\in E$ such that $V(t,i)= V(t,j)-\mathfrak{T}(i,j)$. Since the supremum in~\eqref{eq:pf-HJD} cannot be attained
  for functions $\varphi$ jumping two times or more in $[t,t+\delta]$ for $\delta>0$ small enough, considering all possible choices
  for $\varphi$ with less than one jump, one easily checks that, for $\delta>0$ small enough,
  \begin{align*}
    V(t+\delta,i) & =\lim_{n\rightarrow +\infty} V(t,\varphi^{(n)}(t+\delta))+\int_{t}^{t+\delta} R(\varphi^{(n)}(u),F(\{V(t,\cdot)=0
    \}))\ du-\mathfrak{T}(i,j^*) \\ & =V(t,j^*)-\mathfrak{T}(i,j^*)+\delta R_{j^*}(F(\{V(t,\cdot)=0 \}) \\ & =V(t,i)+\delta
    R_{j^*}(F(\{V(t,\cdot)=0 \})
  \end{align*}
  where $\varphi^{(n)}_{t+s}=i$ if $s< 1/n$ and $\varphi^{(n)}_{t+s}=j^*$ if $1/n\leq s\leq \delta$. Hence
  $\dot{V}(t+\delta,i)=R_{j^*}(F(\{V(t,\cdot)=0 \})$, which gives the result.
%
\end{proof}

\bigskip

\paragraph{Acknowledgments} Nicolas Champagnat benefited from the support of the Chair ``Mod\'elisation Math\'ematique et
Biodiversit\'e'' of Veolia Environnement - \'Ecole Polytechnique - Mus\'eum National d'Histoire Naturelle - Fondation X and from the
support of the French National Research Agency for the project ANR-14-CE25-0013, ``Propagation phenomena and nonlocal equations''
(ANR NONLOCAL).

\bibliographystyle{plain}
\bibliography{biblio}
\end{document}